 \numberwithin{equation}{section}
\theoremstyle{plain}
\newtheorem{thm}{Theorem}[section]
\newtheorem{cor}[thm]{Corollary}
\newtheorem{lem}[thm]{Lemma}
\newtheorem{prop}[thm]{Proposition}
\newenvironment{customthm}[1]
  {\innercustomthm}
  {\endinnercustomthm}
\theoremstyle{definition}
\newtheorem{defn}[thm]{Definition}
\theoremstyle{remark}
\newtheorem{rem}[thm]{Remark}
\newcommand{\M}{\mathbb{M}}
\newcommand{\N}{\mathbb{N}}
\newcommand{\R}{\mathbb{R}}
\newcommand{\ffint}{\iint_{Q_r} \!\!\!\!\!\!\!\!\!\!\!\!\!\!\text{-----}}
\newcommand{\ffintXT}{\iint_{Q_r(x,t)} \!\!\!\!\!\!\!\!\!\!\!\!\!\!\!\!\!\!\!\!\!\! \text{-----}}
\newcommand{\ffRint}{\iint_{Q_R} \!\!\!\!\!\!\!\!\!\!\!\!\!\!\text{-----}}
\newcommand{\intQrtau}{\iint_{Q_{r\tau}} \!\!\!\!\!\!\!\!\!\!\!\!\!\!\!\!\text{-----}}
\newcommand{\intQvarkr}{\iint_{Q_{\vartheta^kr}} \!\!\!\!\!\!\!\!\!\!\!\!\!\!\!\!\!\!\text{-----}}
\newcommand{\fffint}{\iint_{Q_1} \!\!\!\!\!\!\!\!\!\!\!\!\!\!\text{-----}}
\newcommand{\fthetaint}{\iint_{Q_{\vartheta}} \!\!\!\!\!\!\!\!\!\!\!\!\!\!\text{-----}}
\newcommand{\Div}{\text{div}}
\newcommand{\bp}{\begin{proof}[\ensuremath{\mathbf{Proof}}]}
\newcommand{\ep}{\end{proof}}
\newcommand{\bv}{{\bf v}}
\newcommand{\bu}{{\bf u}}
\newcommand{\bg}{{\bf g}}
\newcommand{\bw}{{\bf w}}
\newcommand{\Mmn}{\M^{m\times n}}
\newcommand{\Mnn}{\M^{n\times n}}
\begin{document}

\title{Partial regularity for doubly nonlinear parabolic systems of the first type}

\author{Ryan Hynd\footnote{Department of Mathematics, MIT.  Partially supported by NSF grant DMS-1554130 and an MLK visiting professorship.}}

\maketitle

\begin{abstract}
We study solutions $\bv$ of the parabolic system of PDE 
$$
\partial_t\left(D\psi(\bv)\right)=\Div DF(D\bv).
$$
Here $\psi$ and $F$ are convex functions, and this is a model equation for more general doubly nonlinear evolutions that arise in the study of phase transitions in materials.  We show that if $\bv$ is an appropriately defined weak solution, then $D\bv$ is locally H\"older continuous except for possibly on a lower dimensional subset of the domain of $\bv$.  Our proof is based on compactness properties of solutions,  two  integral identities and a fractional time derivative estimate for $D\bv$. 
\end{abstract}

\section{Introduction}
A doubly nonlinear evolution is a flow that may involve a time derivative of a nonlinear function of a quantity of interest. These types of flows arise in various physical models for phase transitions as detailed in the monograph \cite{Vis}. Notable examples include the Stefan problem, which concerns a classical model of phase transition in solid-liquid systems \cite{Friedman, Friedman2, Oleinik, Rubin, Stefan}; the Hele-Shaw and Muskat problems, which involve the dynamics of two immiscible viscous fluids \cite{Constantin, Richardson, Richardson2, Saff}; the flow of an incompressible fluid through a porous medium \cite{AltLuck, DiaThe}; and the study of interfacial (Gibbs--Thomson) effects that occur during phase nucleation, growth and coarsening \cite{Chalmers, Gurtin, Gurtin2, Perez, Woodruff}. In this paper, we will consider doubly nonlinear evolutions that are also parabolic systems. 

\par  In what follows, we will focus on solutions $\bv: U\times(0,T)\rightarrow \R^m$ of PDE systems of the form
\begin{equation}\label{mainPDE}
\partial_t\left(D\psi(\bv)\right)=\Div DF(D\bv).
\end{equation} 
Here $U\subset\R^n$ is a bounded domain with smooth boundary, $T>0$, and $\psi: \R^m\rightarrow \R$ and $F: \M^{m\times n}\rightarrow \R$ are convex. We will denote $\M^{m\times n}$ as the space of $m\times n$ matrices with real entries and $\bv=(v^1,\dots, v^m)$ for the $m$ component functions $v^i=v^i(x,t)$ of $\bv$. Also note
$$
\bv_t=(v^1_t, \dots, v^m_t)\in \R^m\quad \text{and}\quad D\bv
=\left(\begin{array}{ccc}
v^1_{x_1} & \dots  & v^1_{x_n}\\
& \ddots &  \\
v^m_{x_1} & \dots  & v^m_{x_n}\\
\end{array}\right)\in \M^{m\times n}
$$
are the respective time derivative and spatial gradient matrix of $\bv$.

\par  Writing $\psi(w)=\psi(w^1,\dots, w^m)$ for $w=(w^i)\in \R^m$ and $F(M)=F(M^1_1, \dots, M^m_n)$ for $M=(M^i_j)\in \Mmn$, the system \eqref{mainPDE} can also be posed as the system of $m$ equations
$$
\partial_t\left(\psi_{w_i}(\bv)\right)=\sum^n_{j=1}\left(F_{M^i_j}(D\bv)\right)_{x_j}, \quad i=1, \dots, m. 
$$
Carrying out the derivatives, we may also write
$$
\sum^n_{j=1}\psi_{w_iw_j}(\bv)v^j_t=\sum^m_{k=1}\sum^n_{j,\ell=1}F_{M^i_jM^k_\ell}(D\bv) v^k_{x_\ell x_j}, \quad i=1, \dots, m. 
$$
In particular, we see that the system \eqref{mainPDE} is a type of quasilinear system of parabolic PDE.

\par The principle assumptions that we will make in this work are that there are constants $\theta,\lambda, \Theta,\Lambda>0$ such that
\begin{equation}\label{UnifConv}
\theta |w_1-w_2|^2\le \left(D\psi(w_1)-D\psi(w_2)\right)\cdot\left(w_1-w_2\right)\le 
\Theta |w_1-w_2|^2\end{equation}
for each  $w_1,w_2\in \R^m$ and
\begin{equation}\label{UnifConv2}
\lambda |M_1-M_2|^2\le \left(DF(M_1)-DF(M_2)\right)\cdot\left(M_1-M_2\right)\le \Lambda |M_1-M_2|^2
\end{equation}
for each $M_1,M_2\in \Mmn$.  Here we are using the notation $M\cdot N:=\text{tr}(M^tN)$ and $|M|:=(M\cdot M)^{1/2}$ for each $M,N\in\Mmn$. In other words, $\psi$ and $F$ will always assumed to be (at least) continuously differentiable, uniformly convex and to grow quadratically.  We also point out that these assumptions require $D\psi$ and $DF$ to be globally Lipschitz mappings which are therefore differentiable almost everywhere on $\R^m$ and $\Mmn$, respectively.  

\par Our central results involve the regularity of weak solutions whose definition is postponed until later in this work (Definition \ref{weakSolnLoc}).  We remark that our definition of weak solution requires more integrability than what is typically asked of a mapping $\bv:U\times(0,T)\rightarrow\R^m$ to be a solution of \eqref{mainPDE} (such as in \cite{AltLuck, DiaThe, Vis}).  Nevertheless, we show in the appendix how to construct weak solutions for given initial and boundary conditions with a natural implicit time scheme.  So while our result may not apply to the widest class of solutions of \eqref{mainPDE}, it holds for a class of solutions that exist and arise in a most natural way. 

\begin{customthm}{1}\label{mainThm}
Assume $\psi\in C^2(\R^m)$ and $F\in C^2(\M^{m\times n})$ and that \eqref{UnifConv} and \eqref{UnifConv2} hold. Suppose $\bv$ is a weak solution of \eqref{mainPDE}. Then there is an open subset ${\cal O}\subset U\times(0,T)$ whose complement has Lebesgue measure 0 for which $D\bv$ is H\"older continuous in a neighborhood of each point in ${\cal O}$. 
\end{customthm}
\par Our approach to proving Theorem \ref{mainThm} is as follows. First, we will derive two integral identities for weak solutions which provides us with some local energy estimates. Next, we will establish that every appropriately bounded sequence of solutions of the system \eqref{mainPDE} has a subsequence that converges strongly to another weak solution \eqref{mainPDE}.  Then we will exploit this compactness to argue that certain integral quantities decay as they would for solutions of the linearization of \eqref{mainPDE}. Finally, we will use versions of Lebesgue's differentiation theorem and Campanato's criterion with parabolic cylinders (instead of Euclidean balls) to obtain partial H\"older continuity of the gradients of weak solutions.

\par  It turns out that it is also possible to establish a certain fractional time differentiability of $D\bv$. This property can be used to demonstrate that the set ${\cal O}$ in the statement of Theorem \ref{mainThm} can be selected to be lower dimensional.  We state the following refinement of Theorem \ref{mainThm} postponing the definition of Parabolic Hausdorff measure ${\cal P}^s$ $(0\le s\le n+2)$ until the final section of this paper (Definition \ref{ParaHausMeas}). We only note here that the Lebesgue outer measure on $\R^{n+1}$ is absolutely continuous with respect to ${\cal P}^{n+2}$, which means that the following assertion improves Theorem \ref{mainThm}.
\begin{customthm}{2}\label{secondThm}
Assume $\psi\in C^2(\R^m)$ and $F\in C^2(\M^{m\times n})$ and that \eqref{UnifConv} and \eqref{UnifConv2} hold. Suppose $\bv$ is a weak solution of \eqref{mainPDE} and  
$$
{\cal O}=\{(x,t)\in U\times (0,T): D\bv\; \text{is H\"older continuous in some neighborhood of $(x,t)$}\}.
$$
Then there is $\beta\in (0,1)$ for which 
$$
{\cal P}^{n+2-2\beta}(U\times (0,T)\setminus {\cal O})=0.
$$
\end{customthm}

\par We note that partial regularity statements such as what is claimed above are quite natural to consider for systems. Even in the stationary case of  \eqref{mainPDE}, weak solutions $\bu:U\rightarrow \R^m$ of
$$
-\Div(DF(D\bu))=0
$$
may fail to be regular at some points \cite{DeG, Guisti, Lawson}.  We also note that Theorems \ref{mainThm} and \ref{secondThm} have already been established for the gradient flow system
$$
\partial_t\bv=\Div DF(D\bv),
$$
which corresponds to \eqref{mainPDE} when $\psi(w)=\frac{1}{2}|w|^2$ \cite{Campanato1}. In more recent work \cite{DuzMin, DuzMinSte}, the stronger assertion ${\cal P}^{n-\delta}(U\times (0,T)\setminus {\cal O})=0$ for some $\delta>0$ was verified.
It is also worth mentioning that partial H\"older continuity has been established for weak solutions when restricted to the parabolic boundary of $U\times (0,T)$ \cite{BDMI, BDMII}.

\par    Observe that if we set $\bw=D\psi(\bv)$, $\bw$ formally satisfies the system
\begin{equation}\label{MingionePDE}
\partial_t\bw=\Div A(D\bw,\bw).
\end{equation} 
Here
$$
A(\zeta,w):=DF(D^2\psi^*(w)\zeta)
$$
for $\zeta\in \Mmn$ and $w\in \R^m$; $\psi^*$ is the Legendre transform of $\psi$.  Using the results of F. Duzaar and G. Mingione \cite{DuzMin}, we would be able to conclude that
$D\bw$ is partially H\"older continuous if \eqref{MingionePDE} is uniformly parabolic. That is, if
\begin{equation}\label{SpecA}
D_\zeta A(\zeta, w)\xi\cdot \xi = D^2F(D^2\psi^*(w)\zeta)D^2\psi^*(z)\xi\cdot \xi\ge  \varepsilon |\xi|^2
\end{equation}
for all $\zeta,\xi\in \Mmn$, $w\in \R^m$, and for some $\varepsilon>0$. Since $D\bv=D^2\psi^*(\bw)D\bw$, we would essentially be in position to deduce 
Theorem \ref{secondThm} provided that $D^2\psi^*$ is H\"older continuous.

\par It is possible to find $\psi$ and $F$ for which \eqref{SpecA} holds. This is the case, for example, when \eqref{SpecA} is a scalar equation $(m=1)$. This is also the case when $\psi(w)=\frac{1}{2}|w|^2$ or if $F(M)=\frac{1}{2}|M|^2$, which was studied in detail in \cite{Max}. However, 
\eqref{SpecA} does not hold for every $\psi\in C^2(\R^m)$ and $F\in C^2(\Mmn)$ satisfying \eqref{UnifConv} and \eqref{UnifConv2}, respectively. A simple class of examples for which \eqref{SpecA} fails can be found for $n=1$ so that ${\mathbb M}_{m1}$ can be identified with $\R^m$.  For any two symmetric $m\times m$ positive definite matrices $B$ and $C$ such that $CB^{-1}$ is not positive definite, \eqref{SpecA} does not hold for 
$$
\psi(w)=\frac{1}{2}Bw\cdot w\quad \text{and} \quad F(M)=\frac{1}{2}CM\cdot M
$$
($w, M\in \R^m$). Consequently, it is worthwhile to study the doubly nonlinear parabolic system \eqref{mainPDE} as it is written and not always change variables to put it in the form \eqref{MingionePDE}.

\par We also note that we only consider doubly nonlinear evolutions that are uniformly parabolic, which is a subset of the class of doubly nonlinear evolutions that are degenerate parabolic 
systems. We anticipate 
that our arguments can be extended to address the regularity of solutions to degenerate systems as has been done for doubly nonlinear parabolic equations. In our context, a doubly nonlinear 
parabolic equation corresponds to $m=1$ so that the system \eqref{mainPDE} reduces to a single PDE
$$
\partial_t(\psi'(v))=\Div(DF(Dv))
$$
for a scalar function $v:U\times(0,T)\rightarrow \R$. For many examples of $\psi$ and $F$ that satisfy suitable regularity, convexity and compatible growth conditions, there is a local H\"older estimate for solutions \cite{Ivanov, Porzio, Vespri}. 
For example, H\"older regularity has been shown in various contexts for the particular case 
\begin{equation}\label{pTrudinger}
\partial_t(|v|^{p-2}v)=\Div(|Dv|^{p-2}Dv).
\end{equation}
\cite{Kuusi, Tru}. It would be very interesting to pursue the regularity of systems which have growth and convexity properties analogous to \eqref{pTrudinger}.

\par Finally, we remark that equation \eqref{mainPDE} is known as a doubly nonlinear parabolic system of the {\it first} type.  A doubly nonlinear parabolic system of the {\it second} type is of the form
\begin{equation}\label{SecondTypePDE}
D\psi\left(\bv_t\right)=\Div DF(D\bv).
\end{equation} 
We believe this terminology is due to A. Visintin \cite{Vis}.  The main difference between \eqref{mainPDE} and \eqref{SecondTypePDE} is that \eqref{mainPDE} is quasilinear while \eqref{SecondTypePDE} is fully nonlinear.  Nevertheless, solutions of both systems exhibit partial regularity. In recent work \cite{Hyn}, we have proved analogs of Theorem \ref{mainThm} and \ref{secondThm} for solutions of \eqref{SecondTypePDE}.

\section{Formal computations}
Our first task will  be to briefly derive two simple, yet important integral identities for solutions of \eqref{mainPDE}. They each imply useful local energy estimates. We will establish these identities first for smooth solutions \eqref{mainPDE} and then in the following section we will prove them for weak solutions. In fact, the corresponding estimates dictate the spaces that one may expect weak solutions to belong to; so these estimates actually guide much of the analysis to follow.

\begin{prop}
Let $\psi^*$ be the Legendre transform of $\psi$.   Assume $\bv\in C^\infty(U\times(0,T);\R^m)$ is a solution of \eqref{mainPDE} and $\phi\in C^\infty_c(U\times(0,\infty))$. Then 
\begin{equation}\label{MainIdentity}
\frac{d}{dt}\int_U \psi^*\left(D\psi(\bv)\right)\phi dx+
\int_U \phi DF(D\bv)\cdot D\bv dx=\int_U\left(\psi^*\left(D\psi(\bv)\right)  \phi_t - \bv\cdot DF(D\bv)D\phi \right)dx.
\end{equation}
\end{prop}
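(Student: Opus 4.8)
The plan is to differentiate the first integral under the integral sign, rewrite the resulting time derivative using the chain rule together with the defining relation of the Legendre transform, substitute the PDE, and finally integrate by parts in the spatial variables.

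First I would record the elementary Legendre transform facts that make the identity tick. Since $\psi\in C^2(\R^m)$ is uniformly convex with quadratic growth by \eqref{UnifConv}, the map $D\psi:\R^m\to\R^m$ is a $C^1$ diffeomorphism, $\psi^*$ is $C^1$ (indeed $C^2$), $D\psi^*=(D\psi)^{-1}$, and the Fenchel identity gives the pointwise formula
$$
\psi^*\left(D\psi(w)\right)=w\cdot D\psi(w)-\psi(w),\qquad w\in\R^m.
$$
I would use this formula as the working expression for $\psi^*(D\psi(\bv))$, since it renders the time differentiation transparent. Because $\bv$ is smooth and $\phi$ is compactly supported (so $\phi(\cdot,t)$ has compact support in $U$ for each $t$), I can differentiate under the integral sign,
$$
\frac{d}{dt}\int_U \psi^*\left(D\psi(\bv)\right)\phi\, dx=\int_U \partial_t\!\left[\psi^*\left(D\psi(\bv)\right)\right]\phi\, dx+\int_U \psi^*\left(D\psi(\bv)\right)\phi_t\, dx,
$$
and differentiating the Fenchel formula in $t$, the terms $\bv_t\cdot D\psi(\bv)$ and $-D\psi(\bv)\cdot\bv_t$ cancel, leaving
$$
\partial_t\!\left[\psi^*\left(D\psi(\bv)\right)\right]=\bv\cdot\partial_t\!\left(D\psi(\bv)\right)=\bv\cdot\Div DF(D\bv),
$$
where the last step is exactly \eqref{mainPDE}. (Equivalently, this is the chain rule $\partial_t\psi^*(D\psi(\bv))=D\psi^*(D\psi(\bv))\cdot\partial_t(D\psi(\bv))$ combined with $D\psi^*(D\psi(\bv))=\bv$.)

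Next I would integrate by parts in $x$. Writing things componentwise, $\bv\cdot\Div DF(D\bv)=\sum_{i,j}v^i\big(F_{M^i_j}(D\bv)\big)_{x_j}$, and since $\phi(\cdot,t)$ is compactly supported in $U$ there are no boundary contributions, so
$$
\int_U \phi\,\bv\cdot\Div DF(D\bv)\, dx=-\int_U \bv\cdot DF(D\bv)D\phi\, dx-\int_U \phi\, DF(D\bv)\cdot D\bv\, dx.
$$
Substituting this into the previous display and moving the term $\int_U \phi\, DF(D\bv)\cdot D\bv\, dx$ to the left-hand side yields \eqref{MainIdentity}.

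I do not expect a genuine obstacle: the proposition is a formal identity and every manipulation above is legitimate because $\bv\in C^\infty$ and $\phi$ is compactly supported. The one point worth isolating is the Fenchel bookkeeping — obtaining $\psi^*(D\psi(w))=w\cdot D\psi(w)-\psi(w)$ and noting the cancellation in its $t$-derivative is what produces the clean expression $\bv\cdot\partial_t(D\psi(\bv))$, hence the precise form of the right-hand side of \eqref{MainIdentity}. Everything else is the standard "multiply by $\phi$, integrate, integrate by parts" routine, and this computation will serve as the template for the weak-solution version established in the following section.
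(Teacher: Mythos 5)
Your proposal is correct and follows essentially the same route as the paper: rewrite $\psi^*(D\psi(\bv))$ via the Fenchel identity $D\psi(\bv)\cdot\bv-\psi(\bv)$, exploit the cancellation so the time derivative reduces to $\bv\cdot\partial_t(D\psi(\bv))$, substitute \eqref{mainPDE}, and integrate by parts in $x$ using the compact support of $\phi$. The extra remarks about $D\psi$ being a diffeomorphism and $D\psi^*=(D\psi)^{-1}$ are harmless but not needed for the computation.
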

\begin{proof} By direct computation, we have 
\begin{align*}
\frac{d}{dt}\int_U \psi^*\left(D\psi(\bv)\right)\phi dx&= \frac{d}{dt}\int_U \phi\left(D\psi(\bv)\cdot \bv -\psi(\bv)\right)dx \\
&=\int_U \phi_t\left(D\psi(\bv)\cdot \bv -\psi(\bv)\right)dx +\int_U \phi\partial_t\left(D\psi(\bv)\right)\cdot \bv dx\\
&= 
\int_U \phi_t\psi^*\left(D\psi(\bv)\right)dx +\int_U \phi\Div[DF(D\bv)]\cdot \bv dx\\
&=  
\int_U \phi_t\psi^*\left(D\psi(\bv)\right)dx - \int_U \bv\cdot DF(D\bv)D\phi dx - \int_U \phi DF(D\bv)\cdot D\bv dx.
\end{align*}
\end{proof}

\par 
Now suppose $O$ is the $m\times n$ matrix of zeros. Notice that we have 
$$
\partial_t\left(D\psi(\bv)-D\psi(0)\right)=\Div\left[DF(D\bv)-DF(O)\right].
$$ 
Consequently, upon subtracting $\psi(0)+D\psi(0)\cdot w$ from $\psi(w)$ and subtracting
$F(O)+DF(O)\cdot M$ from $F(M)$, we may assume without any loss of generality that
\begin{equation}\label{ZeroDeriv}
\psi(0)=|D\psi(0)|=0\quad \text{and}\quad F(O)=|DF(O)|=0.
\end{equation}
With this assumption and \eqref{UnifConv},
\begin{equation}\label{MoreEstPsi}
\begin{cases}
 \frac{1}{2}\theta|w|^2\le \psi(w)\le\frac{1}{2}\Theta |w|^2\\
 \frac{1}{2}\theta|w|^2\le D\psi(w)\cdot w-\psi(w)\le\frac{1}{2}\Theta |w|^2\\
\theta |w|^2\le D\psi(w)\cdot w\le \Theta|w|^2\\
\theta |w|\le |D\psi(w)|\le \Theta |w|.
 \end{cases}
\end{equation}
Analogous inequalities hold for $F$ and $DF$, as well.

\begin{cor}\label{EnergyBound1cor}
Assume $\bv\in C^\infty(U\times(0,T);\R^m)$ is a solution of \eqref{mainPDE}. There is a constant $C=C(\lambda,\Lambda,\theta, \Theta)$ such 
that for each $\eta\in C^\infty_c(U\times(0,\infty))$ with $\eta\ge 0$,
\begin{equation}\label{EnergyBound1}
\max_{0\le t\le T}\int_U\eta^2|\bv|^2 dx + \int^T_0\int_U \eta^2|D\bv|^2dxdt 
\le C\int^T_0\int_U\left(\eta|\eta_t|+|D\eta|^2\right) |\bv|^2dxdt.
\end{equation}
\end{cor}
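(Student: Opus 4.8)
The plan is to feed the test function $\phi=\eta^2$ into the integral identity \eqref{MainIdentity} and then read off the two desired quantities using the one-sided bounds in \eqref{MoreEstPsi} together with their analogues for $F$ and $DF$. Writing $\phi_t=2\eta\eta_t$ and $D\phi=2\eta D\eta$, the identity becomes
$$
\frac{d}{dt}\int_U \psi^*(D\psi(\bv))\eta^2\,dx+\int_U \eta^2\,DF(D\bv)\cdot D\bv\,dx=\int_U\left(2\eta\eta_t\,\psi^*(D\psi(\bv))-2\eta\,\bv\cdot DF(D\bv)D\eta\right)dx.
$$

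First I would bound the left-hand side from below. By \eqref{MoreEstPsi}, $\psi^*(D\psi(\bv))=D\psi(\bv)\cdot\bv-\psi(\bv)\ge\tfrac12\theta|\bv|^2\ge 0$, and by the analogue of \eqref{MoreEstPsi} for $F$ (which follows from \eqref{UnifConv2} and the normalization \eqref{ZeroDeriv}), $DF(D\bv)\cdot D\bv\ge\lambda|D\bv|^2$. Next I would bound the right-hand side from above using $\psi^*(D\psi(\bv))\le\tfrac12\Theta|\bv|^2$ and $|DF(D\bv)|\le\Lambda|D\bv|$, and then apply Young's inequality to the cross term,
$$
2\Lambda\,\eta|D\eta|\,|\bv|\,|D\bv|\le\frac{\lambda}{2}\eta^2|D\bv|^2+\frac{2\Lambda^2}{\lambda}|D\eta|^2|\bv|^2,
$$
so that the $\eta^2|D\bv|^2$ contribution is absorbed into the left-hand side. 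This yields the differential inequality
$$
\frac{d}{dt}\int_U \psi^*(D\psi(\bv))\eta^2\,dx+\frac{\lambda}{2}\int_U\eta^2|D\bv|^2\,dx\le\int_U\left(\Theta\,\eta|\eta_t|+\frac{2\Lambda^2}{\lambda}|D\eta|^2\right)|\bv|^2\,dx.
$$

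Integrating in $t$ from $0$ to any $s\in[0,T]$, and using that $\eta\in C^\infty_c(U\times(0,\infty))$ vanishes near $t=0$ so that the initial term drops, then invoking $\psi^*(D\psi(\bv))\ge\tfrac12\theta|\bv|^2$ once more, I would obtain
$$
\frac{\theta}{2}\int_U\eta^2(\cdot,s)|\bv(\cdot,s)|^2\,dx+\frac{\lambda}{2}\int^s_0\!\!\int_U\eta^2|D\bv|^2\,dx\,dt\le\int^T_0\!\!\int_U\left(\Theta\,\eta|\eta_t|+\frac{2\Lambda^2}{\lambda}|D\eta|^2\right)|\bv|^2\,dx\,dt
$$
for every $s\in[0,T]$. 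Taking $s=T$ controls the parabolic energy term, taking the supremum over $s\in[0,T]$ controls $\max_{0\le t\le T}\int_U\eta^2|\bv|^2\,dx$, and adding the two bounds and relabeling the constant gives \eqref{EnergyBound1} with $C=C(\lambda,\Lambda,\theta,\Theta)$.

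I do not expect a genuine obstacle here: the substance is already packaged in \eqref{MainIdentity}, and the rest is the standard Caccioppoli bookkeeping. The only point to watch is the vanishing of the boundary term at $t=0$, which is exactly why $\eta$ is taken to be compactly supported in the \emph{open} time interval; the lateral boundary of $U$ never enters since $\eta$ is also compactly supported in $x$. When this estimate is re-derived for weak solutions later on, the same computation will instead be carried out through the weak formulation together with an approximation argument, but for the smooth solutions considered here it is immediate.
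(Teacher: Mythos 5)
Your proposal is correct and follows essentially the same route as the paper: take $\phi=\eta^2$ in \eqref{MainIdentity}, use \eqref{MoreEstPsi} and its analogue for $F$ together with \eqref{UnifConv2} to bound $\psi^*(D\psi(\bv))$ and the flux terms, absorb the cross term by Young's inequality with the weight $\lambda/2$, and integrate in time using that $\eta$ vanishes near $t=0$. The only cosmetic difference is that you phrase the estimate as a differential inequality before integrating, whereas the paper integrates the identity on $[0,t]$ first; the constant obtained is the same up to harmless bookkeeping.
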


\begin{proof}  Let $\phi=\eta^2$ in \eqref{MainIdentity}. Employing \eqref{UnifConv} and \eqref{MoreEstPsi} and integrating this identity from $[0,t]$ gives  
\begin{align*}
&\frac{\theta}{2}\int_U\eta(x)^2|\bv(x,t)|^2 dx+\lambda\int^t_0\int_U\eta^2|D\bv|^2 dxds\\
&\hspace{1in}\le \int_U\eta(x)^2\psi^*\left(D\psi(\bv(x,t))\right) dx+\int^t_0\int_U\eta^2 D\bv\cdot DF(D\bv) dxds\\
&\hspace{1in}=\int^t_0\int_U\left(\psi^*\left(D\psi(\bv)\right)  2\eta\eta_t - \bv\cdot DF(D\bv)(2\eta D\eta )\right)dxds\\
&\hspace{1in}\le \int^t_0\int_U\left(\Theta|\bv|^2\eta|\eta_t| +|\bv|\cdot \Lambda|D\bv| 2\eta|D\eta|\right)dxds\\
&\hspace{1in}= \int^t_0\int_U\left(\Theta|\bv|^2\eta|\eta_t| +\left(\frac{2\Lambda}{\sqrt{\lambda}} |D\eta||\bv|\right)\cdot  (\sqrt{\lambda}\eta|D\bv|)\right)dxds\\
&\hspace{1in}\le  \left(\Theta+\frac{2 \Lambda^2}{\lambda}\right)\int^t_0\int_U\left(\eta|\eta_t| +|D\eta|^2\right)|\bv|^2dxds +\frac{\lambda}{2}\int^t_0\int_U\eta^2|D\bv|^2 dxds.
\end{align*}
Consequently, the assertion holds with
$$
C=\frac{\displaystyle\Theta + \frac{2\Lambda^2}{\lambda}}{\frac{1}{2}\min\{\theta,\lambda\}}.
$$
\end{proof}

\par One may interpret identity \eqref{MainIdentity} as the result of multiplying \eqref{mainPDE} by $\phi \bv$ and integrating by parts.  We will now discuss another identity which can be obtained in a similar fashion by multiplying \eqref{mainPDE} by $\phi \bv_t$ and integrating by parts.  

\begin{prop}
Assume $\bv\in C^\infty(U\times(0,T);\R^m)$ is a solution of \eqref{mainPDE} and $\phi\in C^\infty_c(U\times(0,\infty))$. Then 
\begin{equation}\label{MainIdentity2}
\frac{d}{dt}\int_U\phi F(D\bv)dx+\int_U\phi \partial_t(D\psi(\bv))\cdot \bv_t dx
=\int_U\left(\phi_t F(D\bv) - \bv_t \cdot DF(D\bv)D\phi \right)dx.
\end{equation}
\end{prop}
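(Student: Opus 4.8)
The plan is to argue exactly as in the proof of \eqref{MainIdentity}, only this time testing equation \eqref{mainPDE} against $\phi\bv_t$ rather than against $\phi\bv$. The first step is to differentiate the leading term of \eqref{MainIdentity2} under the integral sign, which is legitimate since $\bv\in C^\infty$ and $\phi\in C^\infty_c$; commuting $\partial_t$ with $D$ and applying the chain rule gives
\[
\frac{d}{dt}\int_U\phi F(D\bv)\,dx=\int_U\phi_t F(D\bv)\,dx+\int_U\phi\,DF(D\bv)\cdot D\bv_t\,dx .
\]

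Next I would integrate the last term by parts in the spatial variables. This is the same manipulation already performed in the proof of \eqref{MainIdentity}, now with $\bv_t$ in the role of $\bv$; since $\phi$ is compactly supported in $U$ no boundary terms appear, and one obtains
\[
\int_U\phi\,DF(D\bv)\cdot D\bv_t\,dx=-\int_U\bv_t\cdot DF(D\bv)D\phi\,dx-\int_U\phi\,\Div[DF(D\bv)]\cdot\bv_t\,dx .
\]
At this point I would use the equation \eqref{mainPDE} itself to replace $\Div[DF(D\bv)]$ by $\partial_t(D\psi(\bv))$ in the last integral. Combining the two displays and moving the resulting term $\int_U\phi\,\partial_t(D\psi(\bv))\cdot\bv_t\,dx$ to the left-hand side produces \eqref{MainIdentity2} verbatim.

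For smooth solutions there is really no obstacle here: this is a short direct computation, and the only points needing a word are the differentiation under the integral and the spatial integration by parts, both immediate from the smoothness of $\bv$ and the compact support of $\phi$ (the standing hypotheses also force $DF$ to be globally Lipschitz, so every integrand above is genuinely well defined). The step I would expect to require real work is not this one but its weak-solution counterpart, to be handled in the next section: there one must first give meaning to the pairing $\partial_t(D\psi(\bv))\cdot\bv_t$ and then justify the manipulations above via a time-mollification argument. The point of \eqref{MainIdentity2} is that, together with the coercivity $\partial_t(D\psi(\bv))\cdot\bv_t=D^2\psi(\bv)\bv_t\cdot\bv_t\ge\theta|\bv_t|^2$ coming from \eqref{UnifConv} and the quadratic bounds on $F$, the choice $\phi=\eta^2$ yields a local energy estimate in the spirit of Corollary \ref{EnergyBound1cor} but one derivative higher — bounding $\max_t\int_U\eta^2|D\bv|^2\,dx$ together with $\int^T_0\!\int_U\eta^2|\bv_t|^2\,dx\,dt$ by an integral of $(\eta|\eta_t|+|D\eta|^2)|D\bv|^2$ — which is what pins down the function spaces for the weak solutions studied later.
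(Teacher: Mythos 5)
Your proposal is correct and follows essentially the same route as the paper's proof: differentiate under the integral, integrate the term $\int_U\phi\,DF(D\bv)\cdot D\bv_t\,dx$ by parts in space (no boundary terms by compact support of $\phi$), and then substitute $\Div[DF(D\bv)]=\partial_t(D\psi(\bv))$ from \eqref{mainPDE}. The surrounding remarks about the weak-solution counterpart and the resulting energy estimate with $\phi=\eta^2$ also match the paper's subsequent development.
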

\begin{proof}
We compute
\begin{align*}
\frac{d}{dt}\int_U\phi F(D\bv)dx &=\int_U \phi_t F(D\bv) dx+ \int_U\phi DF(D\bv)\cdot D\bv_tdx\\
&=\int_U \phi_t F(D\bv) dx-\int_U \bv_t \cdot \Div(\phi DF(D\bv))dx\\
&=\int_U \phi_t F(D\bv) dx-\int_U \bv_t \cdot \left(DF(D\bv)D\phi+\phi \Div(DF(D\bv))\right)dx\\
&=\int_U \phi_t F(D\bv) dx-\int_U \bv_t \cdot \left(DF(D\bv)D\phi+\phi\partial_t\left(D\psi(\bv)\right)\right)dx.
\end{align*}
\end{proof}

\begin{cor}\label{EnergyBound2cor}
Assume $\bv\in C^\infty(U\times(0,T);\R^m)$ is a solution of \eqref{mainPDE}. There is a constant $C=C(\lambda,\Lambda,\theta)$ such 
that for each $\eta\in C^\infty_c(U\times(0,\infty))$ with $\eta\ge 0$,
\begin{equation}\label{EnergyBound2}
\max_{0\le t\le T}\int_U\eta^2|D\bv|^2 dx + \int^T_0\int_U \eta^2|\bv_t|^2dxdt 
\le C\int^T_0\int_U\left(\eta|\eta_t|+|D\eta|^2\right) |D\bv|^2dxdt.
\end{equation}
\end{cor}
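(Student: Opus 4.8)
The plan is to run the argument of Corollary \ref{EnergyBound1cor} verbatim, now using identity \eqref{MainIdentity2} in place of \eqref{MainIdentity}; this is a Caccioppoli-type estimate, so no serious obstacle should arise. First I would record the two facts needed from the structural hypotheses. By the analog of \eqref{MoreEstPsi} for $F$ (valid under \eqref{UnifConv2} and the normalization \eqref{ZeroDeriv}), we have $\tfrac12\lambda|M|^2\le F(M)\le\tfrac12\Lambda|M|^2$ and $|DF(M)|\le\Lambda|M|$ for every $M\in\Mmn$. Next, since $\psi\in C^2(\R^m)$ the chain rule gives $\partial_t(D\psi(\bv))=D^2\psi(\bv)\bv_t$, while \eqref{UnifConv} (apply it to $w_1=w+s\xi$ and $w_2=w$, divide by $s^2$, and let $s\to0$) yields $D^2\psi(w)\xi\cdot\xi\ge\theta|\xi|^2$ for all $w,\xi\in\R^m$; hence
\[
\partial_t\bigl(D\psi(\bv)\bigr)\cdot\bv_t \;=\; D^2\psi(\bv)\bv_t\cdot\bv_t \;\ge\; \theta|\bv_t|^2 .
\]

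Now I would set $\phi=\eta^2$ in \eqref{MainIdentity2} and integrate over $[0,t]$ for $t\in(0,T)$; the boundary term at $s=0$ vanishes because $\eta(\cdot,0)=0$, leaving
\[
\int_U\eta^2 F(D\bv(\cdot,t))\,dx+\int_0^t\!\!\int_U\eta^2\,\partial_s(D\psi(\bv))\cdot\bv_s\,dx\,ds
=\int_0^t\!\!\int_U\bigl(2\eta\eta_s F(D\bv)-2\eta\,\bv_s\cdot DF(D\bv)D\eta\bigr)\,dx\,ds .
\]
Using $F(D\bv)\ge\tfrac12\lambda|D\bv|^2$ and the pointwise bound above on the left, and $F(D\bv)\le\tfrac12\Lambda|D\bv|^2$ together with $|DF(D\bv)|\le\Lambda|D\bv|$ on the right, the only quantitative step is a Young splitting of the last term, $2\Lambda\,\eta|D\eta|\,|D\bv|\,|\bv_s|=\bigl(\tfrac{2\Lambda}{\sqrt\theta}|D\eta||D\bv|\bigr)\bigl(\sqrt\theta\,\eta|\bv_s|\bigr)\le\tfrac{2\Lambda^2}{\theta}|D\eta|^2|D\bv|^2+\tfrac\theta2\eta^2|\bv_s|^2$, after which the $\tfrac\theta2\eta^2|\bv_s|^2$ contribution is absorbed on the left. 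This produces
\[
\frac\lambda2\int_U\eta^2|D\bv(\cdot,t)|^2\,dx+\frac\theta2\int_0^t\!\!\int_U\eta^2|\bv_s|^2\,dx\,ds\le\Bigl(\Lambda+\frac{2\Lambda^2}{\theta}\Bigr)\int_0^t\!\!\int_U\bigl(\eta|\eta_s|+|D\eta|^2\bigr)|D\bv|^2\,dx\,ds
\]
for every $t\in(0,T)$.

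To finish, since the right-hand integrand is nonnegative I may enlarge the upper limit of integration to $T$; taking the supremum over $t$ in the first left-hand term, keeping the second at $t=T$, and adding the two resulting inequalities gives \eqref{EnergyBound2} with
\[
C=\frac{\displaystyle\Lambda+\frac{2\Lambda^2}{\theta}}{\tfrac12\min\{\lambda,\theta\}} .
\]
I expect the only point that genuinely needs care to be the pointwise inequality $\partial_t(D\psi(\bv))\cdot\bv_t\ge\theta|\bv_t|^2$: this is where $\psi\in C^2$ is actually used, since Lipschitz continuity of $D\psi$ alone would not immediately justify the chain-rule identity $\partial_t(D\psi(\bv))=D^2\psi(\bv)\bv_t$. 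One should also check that the time-integration of \eqref{MainIdentity2} is legitimate as $t\to T^-$, which is routine given $\bv\in C^\infty(U\times(0,T);\R^m)$ and the compact support of $\eta$ in space.
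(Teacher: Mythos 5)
Your proposal is correct and follows essentially the same route as the paper: take $\phi=\eta^2$ in \eqref{MainIdentity2}, use $\partial_t(D\psi(\bv))\cdot\bv_t=D^2\psi(\bv)\bv_t\cdot\bv_t\ge\theta|\bv_t|^2$ together with the quadratic bounds on $F$ and $DF$ from \eqref{UnifConv2} and \eqref{ZeroDeriv}, and absorb the $\frac{\theta}{2}\eta^2|\bv_t|^2$ term via Young's inequality. Your bookkeeping of the constant is if anything slightly more careful than the paper's, and the difference is immaterial since only $C=C(\lambda,\Lambda,\theta)$ is claimed.
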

\begin{proof}
We choose $\phi=\eta^2$ and argue similar to how we did in deriving \eqref{EnergyBound1}.  In particular, 
using \eqref{UnifConv}, \eqref{UnifConv2} and \eqref{ZeroDeriv} and integrating \eqref{MainIdentity2} from $[0,t]$ leads to
\begin{align*}
&\frac{\lambda}{2}\int_U\eta(x)^2|D\bv(x,t)|^2 dx+\theta\int^t_0\int_U\eta^2|\bv_t|^2 dxds\\
&\hspace{1in}\le \int_U\eta(x)^2F(D\bv(x,t)) dx+\int^t_0\int_U\eta^2 D^2\psi(\bv)\bv_t\cdot \bv_t dxds\\
&\hspace{1in}= \int_U\eta(x)^2F(D\bv(x,t)) dx+\int^t_0\int_U\eta^2 \partial_t(D\psi(\bv))\cdot \bv_t dxds\\
&\hspace{1in}=\int^t_0\int_U\left(2\eta\eta_t F(D\bv) - \bv_t \cdot DF(D\bv)2\eta D\eta \right)dxds\\
&\hspace{1in}\le \int^t_0\int_U\left(\Lambda|D\bv|^2\eta|\eta_t| +\eta|\bv_t|\cdot 2\Lambda |D\bv||D\eta|\right)dxds\\
&\hspace{1in}= \Lambda\int^t_0\int_U\left(|D\bv|^2\eta|\eta_t| +\sqrt{\theta}\eta|\bv_t|\cdot \frac{2}{\sqrt{\theta}}|D\bv||D\eta|\right)dxds\\
&\hspace{1in}\le  \Lambda\left(1+\frac{2 }{\theta}\right)\int^t_0\int_U\left(\eta|\eta_t| +|D\eta|^2\right)|D\bv|^2dxds +\frac{\theta}{2}\int^t_0\int_U\eta^2|\bv_t|^2 dxds.
\end{align*}
Therefore, the conclusion holds with
$$
C=\frac{\displaystyle\Lambda\left(1+\frac{2}{\theta}\right)}{\frac{1}{2}\min\{\theta,\lambda\}}.
$$
\end{proof}

\section{Weak Solutions}
In view of estimates \eqref{EnergyBound1} and \eqref{EnergyBound2} and the natural divergence structure of \eqref{mainPDE},  we are lead to a notion of weak solution of \eqref{mainPDE} as specified below. After providing this definition, we will make some crucial observations about the integrability of weak solutions and show that weak solutions satisfy the identities \eqref{MainIdentity} and \eqref{MainIdentity2}.   Then we will establish that weak solutions have a compactness property, which will be vital to our proof of Theorem \ref{mainThm}.

\begin{defn}\label{weakSolnLoc} A measurable mapping $\bv:U\times(0,T)\rightarrow \R^m$ is a {\it weak solution} of \eqref{mainPDE} in $U\times(0,T)$ if $\bv$ satisfies 
\begin{equation}\label{naturalbounds}
\bv\in L^\infty_\text{loc}((0,T); H^1_{\text{loc}}(U;\R^m))\quad\text{and}\quad \bv_t\in L^2_\text{loc}(U\times(0,T);\R^m)
\end{equation}
and
\begin{equation}\label{WeakSolnCond}
\int^T_0\int_U D\psi(\bv)\cdot \bw_t dxdt=\int^T_0\int_U DF(D\bv)\cdot D\bw dxdt,
\end{equation}
for all $\bw\in C^\infty_c(U\times(0,T); \R^m)$.
\end{defn}

\par As mentioned in the introduction, our definition of weak solution definition demands more integrability that what is typically required of solutions to doubly nonlinear parabolic systems (such as those considered in \cite{AltLuck, Vis}).  Instead of Definition \ref{weakSolnLoc}, one could merely ask that a mapping $\bv:U\times(0,T)\rightarrow \R^m$ belong to the space 
$$
\bv\in L^2_\text{loc}((0,T); H^1_{\text{loc}}(U;\R^m))
$$
and satisfy \eqref{WeakSolnCond} for all $\bw\in C^\infty_c(U\times(0,T); \R^m)$. We chose not to do so primarily in view of the identity \eqref{MainIdentity2} and the corresponding energy bound \eqref{EnergyBound2}; these observations are crucial to our proofs of Theorems \ref{mainThm} and \ref{secondThm}.  Moreover, weak solutions as defined above exist.  In Appendix \ref{ExistenceApp}, we show how to construct a weak solution of an initial value problem associated with \eqref{mainPDE} which satisfies the Dirichlet boundary condition via a natural implicit time scheme.

\subsection{Estimates}
We will now proceed to derive some continuity and estimates of weak solutions.  As usual, we will denote $H^{-1}(V;\R^m)$ for the continuous dual space to  $H^1_0(V;\R^m)$ for each open $V\subset U$.

\begin{lem}\label{FirstBasicContLem}
Assume $\bv$ is a weak solution of \eqref{mainPDE} on $U\times(0,T)$, $[t_0,t_1]\subset (0,T)$ and $V\subset\subset U$ is open.  Then 
\begin{equation}\label{DVWeakCont}
\bv: [t_0,t_1]\rightarrow H^{1}(V;\R^m)\text{ is weakly continuous}
\end{equation}
and 
\begin{equation}\label{DpsiVLip}
D\psi\circ \bv: [t_0,t_1]\rightarrow H^{-1}(V;\R^m)\text{ is Lipschitz continuous}.
\end{equation}
\end{lem}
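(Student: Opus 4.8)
The plan is to use the weak formulation \eqref{WeakSolnCond} to control the distributional time derivative of $D\psi\circ\bv$ as an $H^{-1}$-valued object, then bootstrap to the two stated continuity properties. First I would fix $[t_0,t_1]\subset(0,T)$ and $V\subset\subset U$, choose a cutoff $\zeta\in C^\infty_c(U)$ with $0\le\zeta\le 1$ and $\zeta\equiv 1$ on $V$, and localize. For test functions of the form $\bw(x,t)=\alpha(t)\zeta(x)\bg(x)$ with $\alpha\in C^\infty_c((t_0,t_1))$ and $\bg\in C^\infty(V;\R^m)$, the identity \eqref{WeakSolnCond} reads $\int_{t_0}^{t_1}\alpha'(t)\langle D\psi(\bv(\cdot,t)),\zeta\bg\rangle\,dt=-\int_{t_0}^{t_1}\alpha(t)\langle DF(D\bv(\cdot,t)),D(\zeta\bg)\rangle\,dt$. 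Since $\bv\in L^\infty_{\mathrm{loc}}((0,T);H^1_{\mathrm{loc}})$, the quadratic bounds \eqref{UnifConv2} and \eqref{MoreEstPsi} give $DF(D\bv)\in L^\infty_{\mathrm{loc}}((0,T);L^2_{\mathrm{loc}})$, so the right-hand side is bounded by $\|\alpha\|_{L^1}\,\|\bg\|_{H^1(V)}$ times an essential sup over $t$. This says exactly that, as a distribution on $(t_0,t_1)$ with values in $H^{-1}(V;\R^m)$,
\begin{equation}\label{PropDtBound}
\frac{d}{dt}\bigl(D\psi\circ\bv\bigr)=\Div\bigl(DF(D\bv)\bigr)\in L^\infty\bigl((t_0,t_1);H^{-1}(V;\R^m)\bigr),
\end{equation}
with norm controlled by $\lambda,\Lambda,\theta,\Theta$ and the local energy of $\bv$.

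From \eqref{PropDtBound} the Lipschitz claim \eqref{DpsiVLip} is almost immediate: a function $g\in L^\infty((t_0,t_1);X)$ (here $X=H^{-1}(V;\R^m)$) whose distributional derivative lies in $L^\infty((t_0,t_1);X)$ agrees a.e. with a Lipschitz representative $[t_0,t_1]\to X$, by the standard fundamental-theorem-of-calculus argument for Banach-space-valued Sobolev functions (mollify in $t$, integrate, pass to the limit). One must first check $D\psi\circ\bv\in L^\infty((t_0,t_1);H^{-1}(V;\R^m))$, which follows from the last line of \eqref{MoreEstPsi} ($|D\psi(w)|\le\Theta|w|$) together with $\bv\in L^\infty((t_0,t_1);L^2(V;\R^m))$ — indeed even into $L^2(V;\R^m)\hookrightarrow H^{-1}(V;\R^m)$.

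For the weak continuity \eqref{DVWeakCont}, I would argue as follows. The bound $\bv\in L^\infty((t_0,t_1);H^1(V;\R^m))$ means $t\mapsto\bv(\cdot,t)$ is bounded in $H^1(V;\R^m)$; combined with strong continuity into a weaker space it will be weakly continuous into $H^1(V;\R^m)$. To get strong continuity into a weaker space, note that $\bv_t\in L^2_{\mathrm{loc}}(U\times(0,T);\R^m)$ already gives that $t\mapsto\bv(\cdot,t)\in L^2(V;\R^m)$ is (absolutely) continuous, hence certainly strongly continuous into, say, $L^2(V;\R^m)$ itself. Then for any $\bg\in H^1(V;\R^m)^* $ one writes it as a limit of smooth functions and uses the uniform $H^1$ bound to interchange limits, concluding that $t\mapsto\langle\bv(\cdot,t),\bg\rangle$ is continuous; this is precisely weak continuity in $H^1(V;\R^m)$. (Alternatively, a routine lemma: a bounded curve in a reflexive space that is continuous into the ambient Hilbert space in a weaker topology is weakly continuous.)

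The main obstacle is bookkeeping rather than depth: one must be careful that the "natural bounds" \eqref{naturalbounds} are only \emph{local} in space, so every estimate has to be run against a spatial cutoff $\zeta$, and the constants in \eqref{PropDtBound} then depend on $\zeta$, on $\mathrm{dist}(V,\partial U)$, and on $\operatorname*{ess\,sup}_{t\in[t_0,t_1]}\|\bv(\cdot,t)\|_{H^1(V')}$ for a slightly larger $V'$; one should fix such a $V'$ with $V\subset\subset V'\subset\subset U$ at the outset. The only genuinely non-formal point is justifying that $D\psi\circ\bv$, a priori merely measurable, has a distributional time derivative equal to $\Div DF(D\bv)$ in the localized sense — but this is exactly what \eqref{WeakSolnCond} asserts once test functions are separated into $\alpha(t)$ and a spatial part, so there is no real difficulty. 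I expect the whole argument to be short modulo citing the standard Banach-valued absolute-continuity and weak-continuity lemmas (e.g.\ from Evans or Lions–Magenes).
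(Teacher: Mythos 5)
Your proposal is correct and takes essentially the same route as the paper: the weak continuity is obtained exactly as you describe (strong $L^2(V)$-continuity from $\bv_t\in L^2_{\mathrm{loc}}$ plus the uniform $H^1(V)$ bound, upgraded to weak $H^1$ continuity), and the Lipschitz claim comes from reading \eqref{WeakSolnCond} as saying $\partial_t(D\psi\circ\bv)=\Div(DF(D\bv))$ in $H^{-1}(V;\R^m)$ with an $L^\infty$-in-time bound $\|DF(D\bv(\cdot,t))\|_{L^2(V)}\le\Lambda\|D\bv(\cdot,t)\|_{L^2(V)}$ furnished by \eqref{naturalbounds}. The only cosmetic difference is that the paper tests directly with $\bu\in H^1_0(V;\R^m)$ (so no extra cutoff $\zeta$ is needed) and cites a standard lemma from Temam for the a.e.-in-$t$ identity rather than your mollify-and-integrate argument.
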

\begin{proof}
In view of \eqref{naturalbounds}, $\bv_t\in L^2([t_0,t_1];L^2(V;\R^m))$. Therefore, $\bv: [t_0,t_1]\rightarrow L^2(V;\R^m)$ is absolutely continuous.  Now let $t_k\in [t_0,t_1]$ with $t_k\rightarrow t$; we clearly have $\bv(\cdot,t_k)\rightarrow \bv(\cdot,t)$ in $L^2(V;\R^m)$. We also have by \eqref{naturalbounds} that $(D\bv(\cdot,t_{k}))_{k\in \N}\subset L^2(V;\Mmn)$ is bounded and hence has a weakly convergent subsequence $(D\bv(\cdot,t_{k_j}))_{j\in \N}$. It is routine to check that the weak limit of this subsequence must be $D\bv(\cdot,t)$. Since this limit is independent of the subsequence, it follows that 
$D\bv(\cdot,t_{k})\rightharpoonup D\bv(\cdot,t)$ in $L^2(V;\Mmn)$. We conclude  \eqref{DVWeakCont}. 

The weak solution condition \eqref{WeakSolnCond} implies 
\begin{equation}\label{WeakSolnCond2}
\frac{d}{dt}\int_U D\psi(\bv(x,t))\cdot \bu(x) dx + \int_U DF(D\bv(x,t))\cdot D\bu(x) dx=0
\end{equation}
in the sense of distributions on $(0,T)$ for each $\bu \in H^1_0(V;\R^m)\subset H^1_0(U;\R^m)$.  In particular, \eqref{WeakSolnCond2} holds for almost every $t\in (0,T)$ (Chapter 3, Lemma 1.1 of \cite{Temam}).   It follows that $D\psi(\bv): [t_0,t_1]\rightarrow H^{-1}(V;\R^m)$ is differentiable almost everywhere on $[t_0,t_1]$ and that 
$$
\|\partial_t\left(D\psi(\bv(\cdot,t))\right)\|_{H^{-1}(V;\R^m)}=\|DF(D\bv(\cdot,t))\|_{L^2(V;\Mmn)}\le  \Lambda\|D\bv(\cdot,t)\|_{L^2(V;\Mmn)}
$$
for almost every $t\in [t_0,t_1]$. In view of \eqref{naturalbounds}, we deduce \eqref{DpsiVLip}. 
\end{proof}
We can also use elliptic regularity results to conclude some integrability of the Hessian of $\bv=(v^1,\dots,v^m)$. Below we use the notation
$$
D^2\bv:=(D^2v^1,\dots,D^2v^m)\in (\M^{n\times n})^m
$$ 
and 
$$
|D^2\bv|^2:=\sum^{m}_{i=1}|D^2v^i|^2=\sum^{m}_{i=1}\sum^n_{j,k=1}(v^i_{x_jx_k})^2.
$$

\begin{lem}\label{H2prop}
Assume $\bv$ is a weak solution of \eqref{mainPDE} on $U\times(0,T)$. Then 
\begin{equation}\label{H2Boundv}
D^2\bv\in L^2_{\text{loc}}(U\times(0,T);(\M^{n\times n})^m).
\end{equation}
In particular, equation \eqref{mainPDE} holds almost everywhere in $U\times(0,T)$.
\end{lem}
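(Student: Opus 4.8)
The plan is to freeze the time variable: for a.e.\ $t$ the equation \eqref{mainPDE} becomes a second-order elliptic system for $\bv(\cdot,t)$ with an $L^2_{\text{loc}}$ forcing term, so that the classical interior $L^2$ estimate for the Hessian applies slice by slice; integrating in $t$ then yields \eqref{H2Boundv}, and a final integration-by-parts gives the pointwise equation.

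First I would record the frozen-time equation. By \eqref{naturalbounds} and Fubini, for a.e.\ $t\in(0,T)$ one has $\bv(\cdot,t)\in H^1_{\text{loc}}(U;\R^m)$ and $\bv_t(\cdot,t)\in L^2_{\text{loc}}(U;\R^m)$. Identity \eqref{WeakSolnCond2}, established in the proof of Lemma~\ref{FirstBasicContLem}, together with the chain rule $\partial_t(D\psi(\bv))=D^2\psi(\bv)\bv_t$ — legitimate a.e.\ since $D\psi$ is globally Lipschitz with $|D^2\psi|\le\Theta$ by \eqref{UnifConv} — shows that for a.e.\ $t$ the function $\bv(\cdot,t)$ is a weak solution in every $V\subset\subset U$ of
$$
-\Div DF(D\bv(\cdot,t))=\bF(\cdot,t),\qquad \bF(\cdot,t):=-D^2\psi(\bv(\cdot,t))\,\bv_t(\cdot,t),
$$
with $|\bF(\cdot,t)|\le\Theta\,|\bv_t(\cdot,t)|$ pointwise.

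Next I would prove the interior Hessian estimate at a fixed admissible $t$ by Nirenberg's difference-quotient method. With a cutoff $\zeta\in C^\infty_c(V)$, $0\le\zeta\le1$, $\zeta\equiv1$ on $V'\subset\subset V$, and the difference quotient $\tau_{h,s}$ in a coordinate direction $e_s$, I would test the frozen-time equation with $\bu=\tau_{-h,s}(\zeta^2\tau_{h,s}\bv(\cdot,t))$. The monotonicity lower bound in \eqref{UnifConv2} gives $(\tau_{h,s}DF(D\bv))\cdot\tau_{h,s}D\bv\ge\lambda|\tau_{h,s}D\bv|^2$, while \eqref{UnifConv2} also forces $DF$ to be Lipschitz, so $|\tau_{h,s}DF(D\bv)|\le\Lambda|\tau_{h,s}D\bv|$; combining these with Young's inequality to absorb the terms carrying $D\zeta$ and the term with $\bF$, one arrives at
$$
\int_{V'}|\tau_{h,s}D\bv(\cdot,t)|^2\,dx\le C\Big(\|D\bv(\cdot,t)\|_{L^2(V)}^2+\|\bv_t(\cdot,t)\|_{L^2(V)}^2\Big),
$$
with $C$ depending only on $\lambda,\Lambda,\Theta$ and $\operatorname{dist}(V',\partial V)$, uniformly in $h$ and $s$. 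Letting $h\to0$ and summing over $s=1,\dots,n$, the standard difference-quotient lemma yields $D^2\bv(\cdot,t)\in L^2(V';(\M^{n\times n})^m)$ with the same bound. Integrating over $t\in[t_0,t_1]\subset(0,T)$ and using $\bv\in L^\infty((t_0,t_1);H^1(V;\R^m))$ and $\bv_t\in L^2(V\times(t_0,t_1);\R^m)$ from \eqref{naturalbounds} gives $\int_{t_0}^{t_1}\!\int_{V'}|D^2\bv|^2\,dx\,dt<\infty$, which is \eqref{H2Boundv}. Finally, once $D\bv\in W^{1,2}_{\text{loc}}$ in the spatial variables, the Lipschitz map $DF$ precomposed with $D\bv$ lies in $W^{1,2}_{\text{loc}}$ in space with spatial distributional divergence in $L^2_{\text{loc}}$ given a.e.\ by the chain rule, so integrating \eqref{WeakSolnCond2} by parts in $x$ upgrades it to the pointwise identity $\partial_t(D\psi(\bv))=\Div DF(D\bv)$ a.e.\ in $U\times(0,T)$.

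The substance here is entirely classical — it is the textbook interior $H^2$ bound for uniformly elliptic systems — and the only real obstacle is bookkeeping with the time variable: one must verify that the forcing $\partial_t(D\psi(\bv(\cdot,t)))$ genuinely lies in $L^2_{\text{loc}}(U)$ for a.e.\ $t$ with norm square-integrable in $t$ (this is exactly what \eqref{WeakSolnCond2} and the hypothesis $\bv_t\in L^2_{\text{loc}}$ supply), and to justify the a.e.\ chain rules for the globally Lipschitz maps $D\psi$ and $DF$ composed with the Sobolev functions $\bv$ and $D\bv$.
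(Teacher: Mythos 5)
Your proposal is correct and follows essentially the same route as the paper: freeze the time variable via \eqref{WeakSolnCond2} and the a.e.\ chain rule $\partial_t(D\psi(\bv))=D^2\psi(\bv)\bv_t$ to obtain the elliptic system \eqref{MainEqnTimeSlice} slice by slice, apply the interior $W^{2,2}$ estimate with the $L^2$ forcing controlled by $\Theta|\bv_t|$, integrate in time, and integrate by parts to get the pointwise equation. The only difference is that you prove the interior Hessian bound by Nirenberg's difference-quotient method, whereas the paper simply cites that estimate (Proposition 8.6 of \cite{GiaMar}, or Theorem 1 in Section 8.3 of \cite{Evans}), which is proved by exactly the argument you outline.
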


\begin{proof}
In view of \eqref{naturalbounds} and the Lipschitz continuity of $D\psi:\R^m\rightarrow\R^m$, 
$$
\partial_t(D\psi\circ\bv)(\cdot,t)=D^2\psi(\bv(\cdot,t))\bv_t(\cdot,t)
$$
in $L^2_{\text{loc}}(U;\R^m)$ for almost every $t\in (0,T)$. In view of \eqref{WeakSolnCond}, 
\begin{equation}\label{MainEqnTimeSlice}
D^2\psi(\bv(\cdot,t))\bv_t(\cdot,t)=\Div DF(D\bv(\cdot,t))
\end{equation}
weakly in $U$ for almost every $t\in (0,T)$. 

Let $W,V\subset U$ be open with $W\subset\subset V\subset\subset U$. 
As $\bv(\cdot,t)$ satisfies \eqref{MainEqnTimeSlice}, the associated $W^{2,2}_{\text{loc}}(U)$ estimates (Proposition 8.6 in \cite{GiaMar} or Theorem 1, Section 8.3 of \cite{Evans}) for uniformly elliptic Euler-Lagrange equations imply $D^2\bv(\cdot, t)\in L^2_{\text{loc}}(U; (\Mnn)^m)$ and
\begin{align*}
\int_{W} |D^2\bv(x,t)|^2dx&\le C\int_V\left(|D^2\psi(\bv(x,t))\bv_t(x,t)|^2+|D\bv(x,t)|^2\right)dx\\
&\le C\int_V\left(\Lambda^2|\bv_t(x,t)|^2+|D\bv(x,t)|^2\right)dx
\end{align*}
for almost every $t\in (0,T)$. Here $C$ is a constant that is independent of $\bv$. The assertion \eqref{H2Boundv} now follows from integrating the above inequality locally in time.

\par Now that we have also established \eqref{H2Boundv}, we can integrate by parts in \eqref{WeakSolnCond} to get 
$$
\int^T_0\int_U \left[\partial_t(D\psi(\bv)) -\Div(DF(D\bv))\right]\cdot \bw dxdt=0,
$$
for all $\bw\in C^\infty_c(U\times(0,T); \R^m)$. Thus $\partial_t(D\psi(\bv)) =\Div(DF(D\bv))$ almost everywhere in $U\times(0,T)$.
\end{proof}
The conclusion of the previous lemma sets up an application of the interpolation of Lebesgue spaces and the Gagliardo-Nirenberg-Sobolev inequality to improve the space time integrability of the gradient of weak solutions.  We refer also to Lemma 5.3 in \cite{DuzMinSte} for a more general result.
\begin{cor}
Assume $\bv$ is a weak solution of \eqref{mainPDE} on $U\times(0,T)$. There exists an exponent $p>2$ such that
\begin{equation}\label{extraIntegrablePee}
D\bv\in L^{p}_{\text{loc}}(U\times(0,T);\Mmn).
\end{equation}
\end{cor}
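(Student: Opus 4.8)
The plan is to combine the local energy estimates from the previous section with the $W^{2,2}_{\mathrm{loc}}$ bound of Lemma \ref{H2prop} via a parabolic interpolation argument of Gagliardo–Nirenberg type, and then close the loop by a reverse-Hölder / Gehring-type self-improvement. First I would fix a parabolic cylinder $Q \subset\subset U\times(0,T)$ together with a slightly larger cylinder $Q'$ with $Q\subset\subset Q'\subset\subset U\times(0,T)$. On $Q'$ we know from Lemma \ref{H2prop} that $D^2\bv \in L^2$, from Corollary \ref{EnergyBound1cor} (applied through Definition \ref{weakSolnLoc}) that $D\bv \in L^\infty((0,T);L^2)\cap L^2(L^2)$ locally, and from Corollary \ref{EnergyBound2cor} that $\bv_t \in L^2$ locally; in particular $D\bv$, restricted to a.e.\ time slice, lies in $H^1$ of the spatial domain, and $\sup_t \|D\bv(\cdot,t)\|_{L^2}<\infty$ on the inner cylinder after a standard cutoff.

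Next I would apply, for a.e.\ fixed $t$, the Gagliardo–Nirenberg–Sobolev inequality in the spatial variable to $\eta D\bv(\cdot,t)$, where $\eta$ is a spatial cutoff supported in the inner ball: for a suitable $q\in(2, 2^*)$ one has
\begin{equation}
\|\eta D\bv(\cdot,t)\|_{L^q(U)} \le C \|D(\eta D\bv)(\cdot,t)\|_{L^2(U)}^{\alpha}\,\|\eta D\bv(\cdot,t)\|_{L^2(U)}^{1-\alpha},
\end{equation}
with $\alpha = \alpha(n,q)\in(0,1)$ chosen so that $\alpha q = 2$. Raising to the power $q$, integrating in time over the inner time-interval, using the $L^\infty_t L^2_x$ bound on the factor $\|\eta D\bv(\cdot,t)\|_{L^2}^{(1-\alpha)q}$ (which is constant in the remaining integral), and using the $L^2_{x,t}$ bound on $D^2\bv$ and on $|D\eta||D\bv|$, one obtains $\eta D\bv \in L^q(Q'')$ for the intermediate cylinder $Q''$ and some $q>2$ depending only on $n$. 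The exponent produced this way is $p := q$; in dimension $n\le 2$ one gets any $q<\infty$ directly, and for $n\ge 3$ one gets the explicit $q = 2(n+2)/n > 2$ by the parabolic scaling bookkeeping. This already yields \eqref{extraIntegrablePee}, so strictly speaking no Gehring iteration is needed, but I would remark that one could alternatively derive a reverse-Hölder inequality for $|D\bv|^2$ on parabolic cylinders from \eqref{EnergyBound1}–\eqref{EnergyBound2} and invoke Gehring's lemma (as in \cite{DuzMinSte}, Lemma 5.3) to reach the same conclusion with a possibly different $p$.

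The main obstacle is the bookkeeping in the parabolic interpolation: one must track cutoffs in both space and time, absorb the terms coming from differentiating $\eta$, and verify that the combination of the $L^\infty_t L^2_x$ information (from Corollary \ref{EnergyBound1cor}) with the $L^2_t H^2_x$ information (from Lemma \ref{H2prop}) genuinely interpolates to an exponent strictly above $2$ rather than merely reproducing $L^2$. Care is also needed because the estimates in Corollaries \ref{EnergyBound1cor} and \ref{EnergyBound2cor} were stated for smooth solutions; I would note that, once the identities \eqref{MainIdentity} and \eqref{MainIdentity2} are shown to hold for weak solutions (as promised at the start of this section), those energy bounds transfer verbatim, and so all the ingredients are available for weak solutions on any compactly contained subcylinder. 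Everything else is routine.
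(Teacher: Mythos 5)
Your proposal is correct and follows essentially the same route as the paper: a Gagliardo--Nirenberg interpolation per time slice with a spatial cutoff, combining the $L^\infty_t L^2_x$ bound on $D\bv$ with the $L^2_{x,t}$ bound on $D^2\bv$ from Lemma \ref{H2prop}, yielding exactly the exponent $p=2+\tfrac{4}{n}$ for $n\ge 3$ (the paper handles $n=2$ with $p\in[2,4)$ and $n=1$ by a direct one-dimensional argument, so your claim of ``any $q<\infty$'' for $n\le 2$ is a slight overstatement for $n=2$, though harmless since only some $p>2$ is required).
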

\begin{proof}
First let $n\ge 3$, $\eta\in C^\infty_c(U)$ and $r\in (2,2^*)$. Here we are using the notation
$$
2^*:=\frac{2n}{n-2}.
$$
Also select $\lambda \in (0,1)$ so that $r=\lambda\cdot 2 + (1-\lambda)\cdot 2^*$; 
that is $\lambda=(2^*-r)/(2^*-2)$. By the interpolation of the Lebesgue spaces and the Gagliardo-Nirenberg-Sobolev inequality, there is a constant $C_0$ depending only on $r$ and $n$ such that
\begin{align*}
\int_{U}|\eta D\bv|^rdx&\le \left(\int_{U}|\eta D\bv|^2dx\right)^\lambda \left(\int_{U}|\eta D\bv|^{2^*}dx\right)^{1-\lambda}\\
&\le C_0\left(\int_{U}|\eta D\bv|^2dx\right)^\lambda \left(\int_{U}|D(\eta D\bv)|^{2}dx\right)^{\frac{2^*}{2}(1-\lambda)}\\
&\le 2^{\frac{2^*}{2}(1-\lambda)}C_0\left(\int_{U}|\eta D\bv|^2dx\right)^\lambda \left(\int_{U}(\eta^2 |D^2\bv|^{2}+|D\bv|^2|D\eta|^2)dx\right)^{\frac{2^*}{2}(1-\lambda)}.
\end{align*}
Here we have suppressed the time dependence of $D\bv$ and $D^2\bv$; however, we note that the above inequality holds for almost every $t\in (0,T)$. 

\par Now we select $r$ such that 
$$
\frac{2^*}{2}(1-\lambda)=\frac{2^*}{2}\frac{r-2}{2^*-2}=1.
$$
Namely, we choose $r=2+\frac{4}{n}\in (2,2^*)$ to get 
$$
\int_{U}|\eta D\bv|^{2+\frac{4}{n}}dx \le 2C\left(\int_{U}|\eta D\bv|^2dx\right)^\lambda \left(\int_{U}(\eta^2 |D^2\bv|^{2}+|D\bv|^2|D\eta|^2)dx\right).
$$
The assertion then follows for $n\ge 3$ by recalling $D\bv\in L^\infty_{\text{loc}}((0,T); L^2_{\text{loc}}(U;\Mmn))$, invoking Lemma \ref{H2prop} and integrating locally in time. For $n=2$, a similar computation can be made to show that $D\bv\in L^{p}_{\text{loc}}(U\times(0,T);\mathbb{M}^{m\times 2})$ for each $p\in [2,4)$.

\par Now let $n=1$, $V\subset\subset U$ be an interval and $[t_0,t_1]\in (0,T)$. By Lemma \ref{H2prop}, 
$$
\bv_x(x,t)=\bv_x(y,t)+\int^x_y\bv_{xx}(z,t)dz
$$
for $x,y\in V$ and almost every $t\in [t_0,t_1]$. It then follows that
$$
|\bv_x(x,t)|^2\le 2\left(|\bv_x(y,t)|^2+|V|\int_V|\bv_{xx}(z,t)|^2dz\right).
$$
Integrating over $y\in V$ gives 
$$
|V||\bv_x(x,t)|^2\le 2\left(\int_V|\bv_x(y,t)|^2dy+|V|^2\int_V|\bv_{xx}(z,t)|^2dz\right).
$$
\par Consequently, $\bv_x\in L^2([t_0,t_1]; L^\infty(V;\R^m))$. Combining with 
$\bv_x\in L^{\infty}([t_0,t_1];L^2(V;\R^m)) $ leads to
\begin{align*}
\int^{t_1}_{t_0}\int_V|\bv_x(x,t)|^4dxdt&=\int^{t_1}_{t_0}\int_V|\bv_x(x,t)|^2|\bv_x(x,t)|^2dxdt\\
&\le \int^{t_1}_{t_0}|\bv_x(\cdot,t)|_{L^\infty(V;\R^m)}^2 \left(\int_V|\bv_x(x,t)|^2dx\right)dt\\
&\le \left(\int^{t_1}_{t_0}|\bv_x(\cdot,t)|_{L^\infty(V;\R^m)}^2dt\right) \left(\displaystyle\text{ess sup}_{t\in [t_0,t_1]}\int_V|\bv_x(x,t)|^2dx\right)\\
&<\infty.
\end{align*}
Thus $\bv_x\in L^4_{\text{loc}}(U\times(0,T);\R^m)$.
\end{proof}

\subsection{Integral identities }
 We now show that identities \eqref{MainIdentity} and \eqref{MainIdentity2} hold for weak solutions of \eqref{mainPDE}.
\begin{prop}\label{IdentityLemma}
Suppose $\phi \in C^\infty_c(U\times(0,T))$ and $\bv$ is a weak solution of \eqref{mainPDE} on $U\times(0,T)$. \\ (i) 
$$
[0,T]\ni t\mapsto \int_U \psi^*\left(D\psi(\bv(x,t))\right)\phi(x,t) dx
$$
is absolutely continuous and \eqref{MainIdentity} holds for almost every $t\in [0,T]$.\\
(ii)
$$
[0,T]\ni t\mapsto \int_U F(D\bv(x,t))\phi(x,t) dx
$$
is absolutely continuous and \eqref{MainIdentity2} holds for almost every $t\in [0,T]$.
\end{prop}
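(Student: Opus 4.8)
The plan is to run, at the level of weak solutions, the two one‑line formal computations that produced \eqref{MainIdentity} and \eqref{MainIdentity2} in Section 2, replacing each ``differentiate under the integral sign'' and each integration by parts by a step that is licensed by Lemma \ref{H2prop} (the equation holds a.e.\ and $D^2\bv\in L^2_{\text{loc}}$) together with Lemma \ref{FirstBasicContLem}. Fix $\phi$ and choose $V\subset\subset U$ and $[t_0,t_1]\subset(0,T)$ with $\operatorname{supp}\phi\subset V\times(t_0,t_1)$; since $\phi$ vanishes near $t=0$ and $t=T$ it suffices to prove both assertions on $[t_0,t_1]$.

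For (i): by Lemma \ref{H2prop}, for a.e.\ $t$ the equation $\partial_t(D\psi(\bv))=\Div DF(D\bv)$ holds in $L^2(V;\R^m)$, and $\phi(\cdot,t)\bv(\cdot,t)\in H^1_0(V;\R^m)$; using also that $DF(D\bv)(\cdot,t)\in H^1(V;\Mmn)$ (since $F\in C^2$ and $D^2\bv\in L^2_{\text{loc}}$) to integrate by parts in $x$, we get for a.e.\ $t$
\[
\int_U \phi\,\partial_t(D\psi(\bv))\cdot\bv\,dx=-\int_U \phi\,DF(D\bv)\cdot D\bv\,dx-\int_U \bv\cdot DF(D\bv)D\phi\,dx .
\]
Since $\psi\in C^2$ and $\bv_t\in L^2_{\text{loc}}$, the chain rule for Sobolev functions composed with $C^1$ maps gives, a.e.\ in $(x,t)$, the pointwise identity $\partial_t\!\left(\psi^*(D\psi(\bv))\right)=\partial_t\!\left(D\psi(\bv)\cdot\bv-\psi(\bv)\right)=\partial_t(D\psi(\bv))\cdot\bv$, the cancellation being $D\psi(\bv)\cdot\bv_t-D\psi(\bv)\cdot\bv_t$. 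Hence $g:=\phi\,\psi^*(D\psi(\bv))$ is weakly differentiable in $t$ with $\partial_t g=\phi_t\,\psi^*(D\psi(\bv))+\phi\,\partial_t(D\psi(\bv))\cdot\bv\in L^1_{\text{loc}}$, so by Fubini $t\mapsto\int_U g\,dx$ has distributional $t$‑derivative $\int_U\partial_t g\,dx$; substituting the previous display yields exactly \eqref{MainIdentity}. Finally $t\mapsto\int_U g\,dx$ is continuous: $\bv(\cdot,t)$ is absolutely continuous into $L^2(V;\R^m)$ by \eqref{naturalbounds}, and $w\mapsto\psi^*(D\psi(w))$ is locally Lipschitz with quadratic growth, hence induces a continuous Nemytskii map $L^2(V;\R^m)\to L^1(V)$. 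A continuous function with distributional derivative in $L^1_{\text{loc}}$ is locally absolutely continuous, which gives the absolute continuity claim.

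For (ii) the computation of the second Section 2 Proposition is reproduced the same way, except that the spatial integration by parts must fall on $\phi\,DF(D\bv)$ rather than on $\bv_t$, of which we control no spatial derivative. Since $D^2\bv\in L^2_{\text{loc}}$ and $F\in C^2$, the field $\phi\,DF(D\bv)(\cdot,t)$ lies in $H^1_0(V;\Mmn)$ for a.e.\ $t$, so pairing with $\bv_t(\cdot,t)\in L^2(V;\R^m)$ and inserting the equation a.e.\ gives, for a.e.\ $t$,
\[
\int_U\phi\,DF(D\bv)\cdot\partial_t(D\bv)\,dx=-\int_U\Div(\phi\,DF(D\bv))\cdot\bv_t\,dx=-\int_U\!\left(\bv_t\cdot DF(D\bv)D\phi+\phi\,\partial_t(D\psi(\bv))\cdot\bv_t\right)dx .
\]
Adding $\int_U\phi_t F(D\bv)\,dx$ and reading the left side as the contribution of $F(D\bv)$ to $\frac{d}{dt}\int_U\phi\,F(D\bv)\,dx$ produces \eqref{MainIdentity2}; absolute continuity of $t\mapsto\int_U\phi\,F(D\bv)\,dx$ then follows once its distributional $t$‑derivative is identified with the right‑hand side of \eqref{MainIdentity2}, which lies in $L^1_{\text{loc}}$ by Cauchy--Schwarz.

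To make rigorous the interchange of $\frac{d}{dt}$ with $\int_U$, the chain rules for $\psi^*(D\psi(\bv))$ and $F(D\bv)$, and the identification of the $t$‑derivatives above — the subtle point being that $\partial_t(D\bv)$ is a priori only a distribution, carrying no $L^1$ bound — I would work with Steklov averages $f_h(x,t):=\tfrac1h\int_t^{t+h}f(x,s)\,ds$: rewrite \eqref{WeakSolnCond} as $\partial_t(D\psi(\bv))_h=\Div(DF(D\bv))_h$, test against $\phi\,\bv$ (for (i)) and against $\phi\,\bv_t$ (for (ii)), carry out the now‑classical integrations by parts, and let $h\to0$. The linear terms converge in $L^1_{\text{loc}}$ because $\bv_t,D\bv,D^2\bv,DF(D\bv)\in L^2_{\text{loc}}$, and the nonlinear terms $\psi^*(D\psi(\bv))$, $F(D\bv)$, $DF(D\bv)\cdot D\bv$ converge because their integrands are controlled quadratically by $\bv$ and $D\bv$ and $a\mapsto a^2$ is continuous from $L^2_{\text{loc}}$ to $L^1_{\text{loc}}$ (the higher integrability \eqref{extraIntegrablePee} could be used here as well but is not needed). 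This passage from the formal one‑liners to statements valid for a.e.\ $t$ when $\bv$ is merely Sobolev in $(x,t)$, and in case (ii) the bookkeeping that keeps $\partial_t(D\bv)$ from ever being paired alone with an $L^2$ function, is the only genuine obstacle; the algebra, the growth bounds \eqref{MoreEstPsi}, and the Cauchy--Schwarz estimates are routine.
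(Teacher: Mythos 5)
Part (i) of your proposal is fine and is essentially the paper's own argument: absolute continuity of $t\mapsto \bv(\cdot,t)$ into $L^2$, the identity $\psi^*(D\psi(w))=D\psi(w)\cdot w-\psi(w)$, and the a.e.\ validity of the equation from Lemma \ref{H2prop} give \eqref{MainIdentity} exactly as in the paper.

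The gap is in (ii), and it sits at the one step that is the real content of the proposition. Your displayed ``for a.e.\ $t$'' identity pairs $\partial_t(D\bv)$ with $\phi\,DF(D\bv)$, but $\partial_t(D\bv)$ is only a distribution with no $L^1_{\text{loc}}$ bound, so the left-hand side is not an integral, and ``reading the left side as the contribution of $F(D\bv)$ to $\frac{d}{dt}\int_U\phi F(D\bv)\,dx$'' is precisely the time chain rule for $F(D\bv)$ that has to be proved. The Steklov repair as you describe it does not close this: testing the averaged equation against $\phi\,\bv_t$ either reintroduces $D\bv_t$ (if you integrate by parts in space) or collapses to a tautology $\int\partial_t(D\psi(\bv))\cdot\phi\,\bv_t=\int \Div(DF(D\bv))\cdot\phi\,\bv_t$ (if you do not), and in neither case does the term $\frac{d}{dt}\int_U\phi F(D\bv)\,dx$ appear; the $L^1_{\text{loc}}$ convergence of quadratically controlled averages that you invoke shows convergence of $(F(D\bv))_h$ etc., but does not identify the time derivative, because $(DF(D\bv))_h\cdot\partial^h_tD\bv$ is not an exact time difference. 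What makes a difference-quotient route work is to test with $\phi\,\partial^h_t\bv$ and use the two-sided convexity inequalities $DF(D\bv(t))\cdot(D\bv(t+h)-D\bv(t))\le F(D\bv(t+h))-F(D\bv(t))\le DF(D\bv(t+h))\cdot(D\bv(t+h)-D\bv(t))$, integrating by parts onto $\bv(t+h)-\bv(t)$ exactly as in \eqref{Step1Fractional}; this forces a sign restriction on the cutoff (the paper removes it by writing $u=u^+-u^-$ and mollifying), and to get the statement for a.e.\ $t$ with a time-dependent $\phi$ one also needs the strong $H^1_{\text{loc}}$-in-time continuity of $\bv$ (step 4 of the paper's proof); the weak continuity \eqref{DVWeakCont} only yields semicontinuity. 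The paper packages all of this instead by applying the chain rule for convex functionals along absolutely continuous $L^2$-curves (Proposition 1.4.4 and Remark 1.4.6 of \cite{AGS}) to $\Phi(\bw)=\int_V u\,F(D\bw)\,dx$, whose subdifferential $\{-\Div(u\,DF(D\bv))\}$ is an $L^2$-in-time selection. So your plan for (ii) could be completed along difference-quotient lines, but as written the decisive step — absolute continuity of $t\mapsto\int_U\phi F(D\bv)\,dx$ with the asserted derivative — is assumed rather than proved.
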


\begin{proof} 1. Suppose that $\phi$ is supported in $V\times(t_0,t_1)\subset \subset U\times (0,T)$ for some open set $V\subset\R^m$.  We have already remarked in our proof of Lemma \ref{FirstBasicContLem} that $\bv: [t_0,t_1]\rightarrow L^2(V;\R^m)$ is absolutely continuous. Since $D\psi:\R^m\rightarrow\R^m$ is Lipschitz and $\psi^*(D\psi(w))=D\psi(w)\cdot w-\psi(w)$, it is routine to check that $[0,T]\ni t\mapsto \int_U \psi^*\left(D\psi(\bv(x,t))\right)\phi(x,t) dx$ is an absolutely continuous function. By Lemma \ref{H2prop},  we also have that equation \eqref{mainPDE} holds almost everywhere in $U\times(0,T)$ for weak solutions. Using virtually the same formal computations we made when deriving \eqref{MainIdentity}, we can show that  \eqref{MainIdentity} holds almost everywhere in $(0,T)$. We conclude assertion $(i)$.

\par 2. So we are left to prove assertion $(ii)$. To this end, we let $u\in C^\infty_c(U)$ be a function that is supported in $V$ and suppose initially that $u\ge 0$. For a given $\bw \in L^2(V;\R^m)$, we also define 
$$
\Phi(\bw):=
\begin{cases}
\displaystyle\int_V u(x)F(D\bw(x))dx, &\quad \bw\in H^1(V;\R^m)\\
+\infty, & \quad \text{otherwise}.
\end{cases}
$$
Note that $\Phi$ is proper, convex and lower-semicontinuous on $L^2(V;\R^m)$. Moreover, a routine computation shows
\begin{align*}
\partial\Phi(\bv(\cdot,t))&:=\left\{\xi\in L^2(V;\R^m): \;\Phi(\bu)\ge \Phi(\bv(\cdot,t))+\int_V\xi\cdot (\bu -\bv(\cdot,t))dx\; \text{ for all}\; \bu\in L^2(V;\R^m)\right\}\\
&=\{-\Div(uDF(D\bv(\cdot,t)))\}\\
&=\{-DF(D\bv(\cdot,t))Du-u\partial_t(D\psi(\bv(\cdot,t)))\}
\end{align*}
for almost every $t\in (0,T)$. Since $-\Div(uDF(D\bv))\in L^2([t_0,t_1]; L^2(V;\R^m)$
and $\bv_t\in  L^2([t_0,t_1]; L^2(V;\R^m)$, it must be that 
$\Phi\circ \bv$ is locally absolutely continuous on $(0,T)$ (Proposition 1.4.4 and Remark 1.4.6 \cite{AGS}). 

\par By the chain rule (Remark 1.4.6 \cite{AGS}), we have by \eqref{WeakSolnCond2}
\begin{align}\label{uTimeIndepIdentity}
\frac{d}{dt} \int_U F(D\bv(x,t))u(x) dx &=\frac{d}{dt} 
\int_V F(D\bv(x,t))u(x) dx \nonumber \\
&= \frac{d}{dt} (\Phi\circ \bv)(\cdot,t)\nonumber \\
&=\int_V\left[-DF(D\bv(\cdot,t))Du-u\partial_t(D\psi(\bv(\cdot,t)))\right]\cdot \bv_tdx\nonumber\\
&=\int_U\left[-DF(D\bv(\cdot,t))Du-u\partial_t(D\psi(\bv(\cdot,t)))\right]\cdot \bv_tdx
\end{align}
for almost every $t\in (0,T)$. In particular, 
\begin{align}\label{uTimeIndepIdentity2}
&\int_U F(D\bv(x,t))u(x) dx = \int_U F(D\bv(x,s))u(x) dx + \\
&\hspace{1.5in}\int^t_s\int_U\left[-DF(D\bv(x,t))Du(x)-u(x)\partial_t(D\psi(\bv(x,t)))\right]\cdot \bv_t(x,t)dxd\tau
\end{align}
for all $t,s\in [0,T]$. 

\par 3. For  $u\in C^\infty_c(U)$ that is not necessarily nonnegative, we may write $u=u^+-u^-.$  Let $(u^\pm)^\epsilon:=\eta^\epsilon*u^\pm$ be the standard mollification of $u^\pm$. Recall that $\eta\in C^\infty_c(B_1(0))$ is a nonnegative, radial function that satisfies $\int_{B_1(0)}\eta(z)dz=1$ and $\eta^\epsilon:=\epsilon^{-n}\eta(\cdot/\epsilon)$. Moreover, for all $\epsilon>0$ sufficiently small, $(u^\pm)^\epsilon\in C^\infty(U)$. Thus \eqref{uTimeIndepIdentity2} holds with $(u^+)^\epsilon$ and $(u^-)^\epsilon$. Subtracting the resulting equalities actually gives 
\begin{align}
&\int_U F(D\bv(x,t))u^\epsilon(x) dx = \int_U F(D\bv(x,s))u^\epsilon(x) dx + \\
 &\hspace{1.7in}\int^t_s\int_U\left[-DF(D\bv(x,t))Du^\epsilon(x)-u^\epsilon(x)\partial_t(D\psi(\bv(x,t)))\right]\cdot \bv_t(x,t)dxd\tau
\end{align}
for $t,s\in [0,T]$.  Therefore, sending $\epsilon\rightarrow 0^+$ allows us to recover \eqref{uTimeIndepIdentity2} for $u$ without making any restrictions on the sign of $u$. In particular, \eqref{uTimeIndepIdentity} holds for any $u\in C_c^\infty(U)$.

\par 4. Now let $\eta\in C^\infty_c(U)$ be nonnegative and $\eta|_V\equiv 1$.   We will use this function to show that $\bv: (0,T)\rightarrow H^1(V;\R^m)$ is continuous. Recall that we have already shown that this mapping is weakly continuous in \eqref{DVWeakCont}. Moreover, \eqref{uTimeIndepIdentity} implies that $(0,T)\ni t\mapsto \int_U\eta(x) F(D\bv(x,t))dx$ is continuous.  Now let $t_k\in (0,T)$ with $t_k\rightarrow t\in (0,T)$. The uniform convexity of $F$ gives
\begin{align*}
 \int_U F(D\bv(x,t_k))\eta(x)dx 
 & \ge  \int_U F(D\bv(x,t))\eta(x)dx  \\
 & +  \int_U DF(D\bv(x,t))\cdot(D\bv(x,t_k)-D\bv(x,t)) \eta(x)dx \\
 &+\frac{\lambda}{2}\int_U |D\bv(x,t_k)-D\bv(x,t)|^2\eta(x)dx\\
  & \ge  \int_U F(D\bv(x,t))\eta(x)dx  \\
 & +  \int_U DF(D\bv(x,t))\cdot(D\bv(x,t_k)-D\bv(x,t)) \eta(x)dx \\
 &+\frac{\lambda}{2}\int_V |D\bv(x,t_k)-D\bv(x,t)|^2dx.
\end{align*}
Sending $k\rightarrow\infty$, 
$$
\limsup_{k\rightarrow \infty}\int_V |D\bv(x,t_k)-D\bv(x,t)|^2dx=0.
$$
This shows $\bv: (0,T)\rightarrow H^1(V;\R^m)$ is continuous.

\par 5.  Set 
$f(t):=  \int_U F(D\bv(x,t))\phi(x,t) dx$ and let $h\neq 0$. Observe 
\begin{align*}
\frac{f(t+h)-f(t)}{h}&=
\int_U\frac{F(D\bv(x,t+h))\phi(x,t+h)-F(D\bv(x,t))\phi(x,t)}{h}dx\\
&=\int_U\phi(x,t)\left[\frac{F(D\bv(x,t+h))-F(D\bv(x,t))}{h}\right]dx \\
&\hspace{.5in}+ \int_U F(D\bv(x,t+h))\left[\frac{\phi(x,t+h)-\phi(x,t)}{h} \right]dx.
\end{align*}
By parts 2 and 3 of this proof, 
\begin{align*}
\lim_{h\rightarrow 0}\int_U\phi(x,t)\left[\frac{F(D\bv(x,t+h))-F(D\bv(x,t))}{h}\right]dx  = 
\hspace{1in} \\
-\int_U\bv_t(x,t)\cdot DF(D\bv(x,t))D\phi(x,t) dx-\int_U\phi(x,t) \partial_t(D\psi(\bv(x,t)))\cdot \bv_t(x,t) dx
\end{align*}
for almost every $t\in (0,T)$.  From part 4 of this proof, 
\begin{align*}
\lim_{h\rightarrow 0}\int_U F(D\bv(x,t+h))\left[\frac{\phi(x,t+h)-\phi(x,t)}{h} \right]dx=\int_U F(D\bv(x,t))\phi_t(x,t)dx
\end{align*}
for every $t\in (0,T)$.  Combining these limits completes a proof of \eqref{MainIdentity2}. Finally, we note that if \eqref{MainIdentity2} holds then $f$ is absolutely continuous as each term in \eqref{MainIdentity} aside from the time derivative belongs to $L^1[0,T]$.
\end{proof}
\begin{cor}\label{EnergyBound1Cor}
Every weak solution of \eqref{mainPDE} in $U\times(0,T)$ satisfies inequalities \eqref{EnergyBound1} and \eqref{EnergyBound2}. 
\end{cor}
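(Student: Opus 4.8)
The plan is to reproduce the computations in the proofs of Corollaries \ref{EnergyBound1cor} and \ref{EnergyBound2cor} essentially verbatim. Those computations use, beyond the pointwise bounds \eqref{UnifConv}, \eqref{UnifConv2}, \eqref{ZeroDeriv}, \eqref{MoreEstPsi}, only the identities \eqref{MainIdentity} and \eqref{MainIdentity2} together with their validity after integration in time, and all of this is now available for weak solutions thanks to Proposition \ref{IdentityLemma}.

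Concretely, fix $\eta\in C^\infty_c(U\times(0,\infty))$ with $\eta\ge 0$. Since a weak solution lives only on $U\times(0,T)$, I would first fix $\tau\in(0,T)$, pick $\zeta\in C^\infty(\R)$ with $0\le\zeta\le 1$, $\zeta\equiv 1$ on $(-\infty,\tau]$ and $\zeta\equiv 0$ on $[\tau',T]$ for some $\tau<\tau'<T$, and set $\phi:=\eta^2\zeta$. Then $\phi\in C^\infty_c(U\times(0,T))$ — it vanishes near $t=0$ since $\eta$ does, and near $t=T$ since $\zeta$ does — so Proposition \ref{IdentityLemma} applies: the maps $t\mapsto\int_U\psi^*(D\psi(\bv))\phi\,dx$ and $t\mapsto\int_U F(D\bv)\phi\,dx$ are absolutely continuous on $[0,T]$ and \eqref{MainIdentity}, \eqref{MainIdentity2} hold with this $\phi$ for a.e.\ $t\in[0,T]$.

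Next I would integrate \eqref{MainIdentity} (resp.\ \eqref{MainIdentity2}) over $[0,\tau]$. On $[0,\tau]$ one has $\phi=\eta^2$, $\phi_t=2\eta\eta_t$, $D\phi=2\eta D\eta$, and the boundary contribution at $t=0$ vanishes because $\eta$ is supported away from $\{t=0\}$; the outcome is precisely the identity that opens the proof of Corollary \ref{EnergyBound1cor} (resp.\ \ref{EnergyBound2cor}), with $T$ replaced by $\tau$ and with the leading term $\int_U\eta^2|\bv|^2\,dx$ (resp.\ $\int_U\eta^2|D\bv|^2\,dx$) evaluated at $t=\tau$. For the analogue of Corollary \ref{EnergyBound2cor} one additionally uses $\partial_t(D\psi(\bv))=D^2\psi(\bv)\bv_t$ a.e., which holds as in the proof of Lemma \ref{H2prop}. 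Applying \eqref{UnifConv}, \eqref{UnifConv2}, \eqref{ZeroDeriv}, \eqref{MoreEstPsi} and Young's inequality exactly as there, and absorbing the superfluous $\lambda\iint\eta^2|D\bv|^2$ (resp.\ $\theta\iint\eta^2|\bv_t|^2$) term into the left side, yields for every $\tau\in(0,T)$ the stated inequalities with $[0,T]$ replaced by $[0,\tau]$ and the same constants. Letting $\tau\uparrow T$ then gives \eqref{EnergyBound1} and \eqref{EnergyBound2}: the space-time integrals converge by monotone convergence, and the pointwise-in-time terms are handled using the weak continuity of $\bv:[t_0,t_1]\to H^1(V;\R^m)$ from Lemma \ref{FirstBasicContLem} and the continuity of $\bv:(0,T)\to H^1_{\text{loc}}(U;\R^m)$ established in part 4 of the proof of Proposition \ref{IdentityLemma} (together with the fact that $\eta$ vanishes near $t=0$, so only $t$ bounded away from $0$ matter).

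I do not expect a substantive obstacle: the analytic content is identical to the smooth case, Proposition \ref{IdentityLemma} was set up precisely to transfer it to weak solutions, and Lemma \ref{H2prop} together with the improved integrability \eqref{extraIntegrablePee} guarantee that all integrands appearing are locally integrable in $(x,t)$, so the identities may legitimately be integrated in time. The only slightly delicate bookkeeping is the treatment of a cutoff $\eta$ whose time-support need not be contained in $(0,T)$, which is exactly what the auxiliary cutoff $\zeta$ and the limit $\tau\uparrow T$ take care of.
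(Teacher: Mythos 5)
Your proposal is correct and follows exactly the route the paper intends: the corollary is stated without proof precisely because Proposition \ref{IdentityLemma} transfers the identities \eqref{MainIdentity} and \eqref{MainIdentity2} to weak solutions, after which one repeats the computations of Corollaries \ref{EnergyBound1cor} and \ref{EnergyBound2cor} verbatim. Your extra bookkeeping with the temporal cutoff $\zeta$ and the limit $\tau\uparrow T$ (needed since $\eta\in C^\infty_c(U\times(0,\infty))$ need not vanish near $t=T$) is a legitimate and careful way to handle a detail the paper glosses over.
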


\subsection{Fractional time differentiability}
In part 4 of the proof of Proposition \ref{IdentityLemma}, we showed that for each weak solution $\bv$ of \eqref{mainPDE} in $U\times (0,T)$, $\bv:(0,T)\rightarrow H^1(V;\R^m)$ is continuous for every $V\subset\subset U$.  This strengthened our previous assertion \eqref{DVWeakCont}. Now we will build on these observations and establish a type of fractional time differentiability of $D\bv$. The following estimate will be crucial to our proof of Theorem \ref{secondThm}.
\begin{prop}
Assume $\bv$ is a weak solution of \eqref{mainPDE} in $U\times (0,T)$, and let $p>2$ be the exponent in \eqref{extraIntegrablePee}.  For each open $V\subset\subset U$ and $[t_0,t_1]\in (0,T)$, there is a constant $C$ such that 
\begin{equation}\label{FractionDVDeriv}
\int^{t_1}_{t_0}\int_V|D\bv(x,t+h)-D\bv(x,t)|^2dxdt\le C |h|^{\frac{1}{2}-\frac{1}{p}}
\end{equation} 
for $0<|h|<\min\{1,t_0,T-t_1\}$. 
\end{prop}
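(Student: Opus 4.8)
The plan is a localized difference–quotient (Nikolskii‑type) estimate: we test the equation, differenced in time, against a cutoff times the time increment of $\bv$, and play the uniform convexity of $F$ in \eqref{UnifConv2} against the time‑regularity that $D\psi(\bv)$ already possesses. Fix open sets $V\subset\subset V''\subset\subset U$, a cutoff $\eta\in C^\infty_c(V'')$ with $\eta\equiv 1$ on $V$, and a compact interval $I$ with $[t_0,t_1]$ in its interior and $I\subset\subset(0,T)$. It suffices to prove \eqref{FractionDVDeriv} for $|h|$ small enough that $t+h\in I$ for every $t\in[t_0,t_1]$; for the remaining, bounded‑away‑from‑zero values of $|h|$ the inequality holds once the constant is enlarged, since its left side is then at most a fixed multiple of $\|D\bv\|^2$ on a compact subset of $U\times(0,T)$ — recall $D\bv\in L^\infty_{\rm loc}((0,T);L^2_{\rm loc})$ — while its right side stays bounded below. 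Abbreviate $\bv^h:=\bv(\cdot,\cdot+h)$, $\ba:=\bv^h-\bv$, $\bg:=D\psi(\bv)$, $\bg^h:=D\psi(\bv^h)$.

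By Lemma \ref{H2prop}, $\partial_t(D\psi(\bv))=\Div DF(D\bv)$ a.e.\ in $U\times(0,T)$; moreover, by \eqref{naturalbounds} and the Lipschitz continuity of $D\psi$, this quantity equals $D^2\psi(\bv)\bv_t$ and hence lies in $L^2_{\rm loc}(U\times(0,T);\R^m)$. Subtracting this identity at times $t+h$ and $t$, pairing for a.e.\ $t$ with the admissible field $\eta^2\ba(\cdot,t)\in H^1_0(V'';\R^m)$, and integrating by parts once in $x$ gives
\begin{align*}
\lambda\int_{V''}\eta^2|D\ba|^2\,dx
&\le\int_{V''}\eta^2\bigl(DF(D\bv^h)-DF(D\bv)\bigr)\cdot D\ba\,dx \\
&=-\int_{V''}\eta^2\,\partial_t(\bg^h-\bg)\cdot\ba\,dx-2\int_{V''}\eta\,\ba\cdot\bigl(DF(D\bv^h)-DF(D\bv)\bigr)D\eta\,dx,
\end{align*}
the first inequality being the lower bound in \eqref{UnifConv2}. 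Integrate over $t\in[t_0,t_1]$ and integrate the term $-\int\!\!\int\eta^2\partial_t(\bg^h-\bg)\cdot\ba$ by parts in time; this is legitimate because $t\mapsto\bg^h(\cdot,t)-\bg(\cdot,t)$ is an absolutely continuous curve in $L^2(V'';\R^m)$ with $L^2$ time derivative (by the previous sentence), as is $t\mapsto\ba(\cdot,t)$ by \eqref{naturalbounds}. One obtains the boundary contribution $\bigl[-\int_{V''}\eta^2(\bg^h-\bg)\cdot\ba\,dx\bigr]_{t_0}^{t_1}$ together with the interior term $+\int_{t_0}^{t_1}\!\!\int_{V''}\eta^2(\bg^h-\bg)\cdot\ba_t\,dx\,dt$. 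The key point is that $(\bg^h-\bg)\cdot\ba=(D\psi(\bv^h)-D\psi(\bv))\cdot(\bv^h-\bv)\ge0$ by the monotonicity in \eqref{UnifConv}: thus the $t_1$‑boundary term may be discarded, while the $t_0$‑boundary term is $\le\Theta\|\eta\|_\I^2\|\bv(\cdot,t_0+h)-\bv(\cdot,t_0)\|_{L^2(V'')}^2$.

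It remains to estimate three space‑time integrals, all via the elementary bound $\|\bv(\cdot,\cdot+h)-\bv\|_{L^2(V''\times J)}\le|h|\,\|\bv_t\|_{L^2(V''\times I)}$ (for $J$ with $J,\,J+h\subset I$) and its one‑time‑slice analogue. The $t_0$‑boundary term is $\le C|h|$. The interior term $\int\!\!\int\eta^2(\bg^h-\bg)\cdot\ba_t$ is $\le C|h|$ as well, since $|\bg^h-\bg|\le\Theta|\ba|$ ($D\psi$ being $\Theta$‑Lipschitz) and Cauchy–Schwarz applies with $\|\ba\|_{L^2}\lesssim|h|$ and $\|\ba_t\|_{L^2(V''\times(t_0,t_1))}\le2\|\bv_t\|_{L^2(V''\times I)}$. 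For the $D\eta$‑term, $DF$ is $\Lambda$‑Lipschitz, so $2\Lambda\,\eta|D\ba|\,|D\eta|\,|\ba|\le\tfrac\lambda2\eta^2|D\ba|^2+\tfrac{2\Lambda^2}\lambda|D\eta|^2|\ba|^2$, whose last term integrates to $\le C|h|^2$. Absorbing $\tfrac\lambda2\int\!\!\int\eta^2|D\ba|^2$ into the left side yields $\int_{t_0}^{t_1}\!\!\int_{V''}\eta^2|D\ba|^2\,dx\,dt\le C|h|$ for $|h|<1$; since $\eta\equiv1$ on $V$ and $|h|\le|h|^{\frac12-\frac1p}$ for $|h|<1$ (as $\tfrac12-\tfrac1p<1$), estimate \eqref{FractionDVDeriv} follows.

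The only genuinely delicate points are the two flagged above: that $\partial_t(D\psi(\bv))$ is an honest $L^2_{\rm loc}$ function — so that the spatial integration by parts is valid and $\bg^h-\bg$ is a $W^{1,2}$ curve in $L^2$ — and the favorable sign of the $t_1$‑boundary term, which is exactly the convexity of $\psi$ in \eqref{UnifConv}; the rest is routine Caccioppoli bookkeeping. (The argument in fact delivers the sharper bound $C|h|$; the weaker exponent $\tfrac12-\tfrac1p$ recorded in \eqref{FractionDVDeriv} is all that is needed later, in the proof of Theorem \ref{secondThm}.)
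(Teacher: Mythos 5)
Your proof is correct, but it takes a genuinely different route from the paper's. The paper never differences the equation in time: it applies the identity \eqref{uTimeIndepIdentity} (part (ii) of Proposition \ref{IdentityLemma}) to compare $\int_U u\,F(D\bv(\cdot,t+h))\,dx$ with $\int_U u\,F(D\bv(\cdot,t))\,dx$, converts the gap into $\tfrac{\lambda}{2}\int_V|D\bv(\cdot,t+h)-D\bv(\cdot,t)|^2dx$ by the convexity of $F$ together with the slicewise equation, and controls the flux term $\int_t^{t+h}\!\!\int_U\bv_t\cdot DF(D\bv)Du$ by H\"older in time using the higher integrability $D\bv\in L^p_{\text{loc}}$ from \eqref{extraIntegrablePee} --- which is exactly where the exponent $\tfrac12-\tfrac1p$ comes from. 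You instead difference the a.e.\ (slicewise weak) equation in time, which is available by Lemma \ref{H2prop}, test with $\eta^2(\bv(\cdot,\cdot+h)-\bv)$, and integrate by parts in time, using the monotonicity of $D\psi$ in \eqref{UnifConv} to discard the $t_1$-boundary term; the only ingredients are $\bv_t\in L^2_{\text{loc}}$ and $\partial_t(D\psi(\bv))=D^2\psi(\bv)\bv_t\in L^2_{\text{loc}}$, so you need neither the exponent $p>2$ nor the $F$-identity. This is the classical Nikolskii difference-quotient argument and it delivers the stronger rate $C|h|$, which would allow any $\beta<\tfrac12$ in Corollary \ref{TimeDiffDVCor} and hence a marginally better dimension bound in Theorem \ref{secondThm}; the paper's route buys economy, reusing Proposition \ref{IdentityLemma} and \eqref{extraIntegrablePee}, which are established for other purposes anyway. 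One shared caveat: when $|h|$ approaches $\min\{t_0,T-t_1\}$, the shifted cylinder $V\times[t_0+h,t_1+h]$ leaves every fixed compact subset of $U\times(0,T)$, so your ``enlarge the constant for $|h|$ bounded away from zero'' remark is only literally uniform after shrinking the admissible range of $h$ or enlarging the reference interval --- but the paper's own proof has the same imprecision (it bounds $\int_t^{t+h}$ by $\int_{t_0}^{t_1}$ in \eqref{Step2Fractional}), and the point is immaterial for the application in Corollary \ref{TimeDiffDVCor}.
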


\begin{proof}
Let $u\in C^\infty_c(U)$ be a nonnegative function satisfying $u\equiv 1$ on $V$.  We have from \eqref{uTimeIndepIdentity} that 
$$
\int_Uu(x) F(D\bv(x,t+h))dx
\le \int_Uu(x) F(D\bv(x,t))dx- \int^{t+h}_t\int_U\bv_t \cdot DF(D\bv)Du dxds
$$
for $t\in [t_0,t_1]$ and $h\in (0,\min\{1,t_0,T-t_1\})$. By the uniform convexity of $F$
\begin{align}\label{Step1Fractional}
\int^{t+h}_t\int_U\bv_t \cdot DF(D\bv)Du dxds&\ge \int_Uu(x)(F(D\bv(x,t+h))-F(D\bv(x,t)) )dx\nonumber\\
&\ge \int_Uu(x)DF(D\bv(x,t))\cdot(D\bv(x,t+h)-D\bv(x,t))dx\nonumber \\
&\quad + \frac{\lambda}{2}\int_Uu(x)|D\bv(x,t+h)-D\bv(x,t)|^2dx\nonumber\\
&\ge -\int_V\Div(u(x)DF(D\bv(x,t)))\cdot(\bv(x,t+h)-\bv(x,t))dx \nonumber\\
&\quad + \frac{\lambda}{2}\int_V|D\bv(x,t+h)-D\bv(x,t)|^2dx.
\end{align}

\par By \eqref{naturalbounds} and \eqref{extraIntegrablePee}, $|\bv_t||D\bv|\in L^{\frac{2p}{p+2}}_{\text{loc}}(U\times (0,T);\R^m)$.  Therefore,
\begin{align}\label{Step2Fractional}
\int^{t+h}_t\int_U\bv_t \cdot DF(D\bv)Du dxds&\le C_0\int^{t+h}_t\int_V|\bv_t||D\bv|dxds \nonumber\\
&\le C_0\left(\int^{t_1}_{t_0}\int_V\left(|\bv_t||D\bv|\right)^{\frac{2p}{p+2}}dxds\right)^{\frac{1}{2}+\frac{1}{p}}\left(|V|h\right)^{\frac{1}{2}-\frac{1}{p}} \nonumber\\
&\le C_0|V|^{\frac{1}{2}-\frac{1}{p}}\left(\int^{t_1}_{t_0}\int_V\left(|\bv_t||D\bv|\right)^{\frac{2p}{p+2}}dxds\right)^{\frac{1}{2}+\frac{1}{p}}h^{\frac{1}{2}-\frac{1}{p}}.\quad
\end{align}
Here $C_0$ depends on $\Lambda$ and $\|Du\|_{L^\infty(U;\R^n)}$.  We can also use \eqref{naturalbounds} to conclude
\begin{align}\label{Step3Fractional}
&\int_V\Div(u(x)DF(D\bv(x,t)))\cdot(\bv(x,t+h)-\bv(x,t))dx  \hspace{2in}\nonumber\\
&\hspace{1in}\le\left(\int_V|\Div(uDF(D\bv))|^2dx\right)^{1/2}\left(\int_V|\bv(x,t+h)-\bv(x,t)|^2dx\right)^{1/2}\nonumber\\
&\hspace{1in}\le\left(\int_V\left|Du\cdot DF(D\bv)+ u\partial_t(D\psi(\bv))\right|^2dx\right)^{1/2}\left(h\int^{t+h}_{t}\int_V|\bv_t|^2dxds\right)^{1/2}\nonumber\\
&\hspace{1in}\le C_1\left(\int_V(|D\bv|^2+ |\bv_t|^2)dx\right)^{1/2}\left(\int^{t_1}_{t_0}\int_V|\bv_t|^2dxds\right)^{1/2}h^{1/2},
\end{align}
for a constant $C_1$ depending on $\Lambda,\Theta$ and $\|u\|_{L^\infty(U)}+\|Du\|_{L^\infty(U;\R^n)}$.

\par Combining \eqref{Step1Fractional}, \eqref{Step2Fractional} and \eqref{Step3Fractional} gives 
\begin{align*}
&\frac{\lambda}{2}\int^{t_1}_{t_0}\int_V|D\bv(x,t+h)-D\bv(x,t)|^2dxdt \\
& \hspace{1in}\le C_0(t_1-t_0)|V|^{\frac{1}{2}-\frac{1}{p}}\left(\int^{t_1}_{t_0}\int_V\left(|\bv_t||D\bv|\right)^{\frac{2p}{p+2}}dxds\right)^{\frac{1}{2}+\frac{1}{p}}h^{\frac{1}{2}-\frac{1}{p}}\\
& \hspace{1in}+ C_1\left\{\int^{t_1}_{t_0}\left(\int_V(|D\bv|^2+ |\bv_t|^2)dx\right)^{1/2}dt\right\}\left(\int^{t_1}_{t_0}\int_V|\bv_t|^2dxds\right)^{1/2}h^{1/2} \\
& \hspace{1in}\le C_0(t_1-t_0)|V|^{\frac{1}{2}-\frac{1}{p}}\left(\int^{t_1}_{t_0}\int_V\left(|\bv_t||D\bv|\right)^{\frac{2p}{p+2}}dxds\right)^{\frac{1}{2}+\frac{1}{p}}h^{\frac{1}{2}-\frac{1}{p}}\\
& \hspace{1in}+ C_1|t_1-t_0|^{1/2}\left(\int^{t_1}_{t_0}\int_V(|D\bv|^2+ |\bv_t|^2)dxdt\right)^{1/2}\left(\int^{t_1}_{t_0}\int_V|\bv_t|^2dxds\right)^{1/2}h^{1/2}\\
& \hspace{1in} \le \left\{C_0(t_1-t_0)|V|^{\frac{1}{2}-\frac{1}{p}}\left(\int^{t_1}_{t_0}\int_V\left(|\bv_t||D\bv|\right)^{\frac{2p}{p+2}}dxds\right)^{\frac{1}{2}+\frac{1}{p}}  \right. \\
&\hspace{1in}\left.+ C_1|t_1-t_0|^{1/2}\left(\int^{t_1}_{t_0}\int_V(|D\bv|^2+ |\bv_t|^2)dxdt\right)^{1/2}\left(\int^{t_1}_{t_0}\int_V|\bv_t|^2dxds\right)^{1/2}\right\}h^{\frac{1}{2}-\frac{1}{p}}.
\end{align*}
This computation establishes \eqref{FractionDVDeriv} for positive $h$.   A similar argument establishes  \eqref{FractionDVDeriv} for negative $h$. We leave the details to the reader. 
\end{proof}
It now follows fairly routinely that $D\bv$ is fractionally differentiable with respect to time as exhibited in \eqref{FractionDVDeriv2} below. The following assertion can be found in Proposition 3.4  \cite{DuzMin} or Proposition 2.19 of \cite{DuzMinSte}.
\begin{cor}\label{TimeDiffDVCor}
Assume $\bv$ is a weak solution of \eqref{mainPDE} in $U\times (0,T)$ and $p>2$ is the exponent in \eqref{extraIntegrablePee}.  For each open $V\subset\subset U$, $[t_0,t_1]\in (0,T)$, and
\begin{equation}\label{BetaInterval}
\beta\in \left(0,\frac{1}{2}-\frac{1}{p}\right),
\end{equation}
there is constant $\kappa=\kappa(p,\beta,n,t_0,t_1, V)>0$ such that 
\begin{equation}\label{FractionDVDeriv2}
\int^{t_1}_{t_0}\int^{t_1}_{t_0}\int_V\frac{|D\bv(x,t)-D\bv(x,s)|^2}{|t-s|^{1+\beta}}dxdtds\le \kappa\left(C+\|D\bv\|^2_{L^2(V\times[t_0,t_1])}\right).
\end{equation} 
Here $C$ is the constant in \eqref{FractionDVDeriv}.
\end{cor}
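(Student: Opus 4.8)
The goal is to upgrade the single-shift estimate \eqref{FractionDVDeriv} to the Gagliardo-type seminorm bound \eqref{FractionDVDeriv2}. The plan is to exploit the by-now standard equivalence between a Nikolskii-type estimate on translates and membership in a fractional Sobolev space, applied to the function $g(x,t) := D\bv(x,t)$ viewed as taking values in $L^2(V;\Mmn)$. First I would fix $\beta\in(0,\tfrac12-\tfrac1p)$ and set $\gamma := \tfrac12-\tfrac1p$, so that \eqref{FractionDVDeriv} reads $\int_{t_0}^{t_1}\|g(\cdot,t+h)-g(\cdot,t)\|_{L^2(V)}^2\,dt\le C|h|^{\gamma}$ for all $0<|h|<h_0:=\min\{1,t_0,T-t_1\}$. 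The point is that $\gamma>\beta$, which will make the relevant one-dimensional integral converge near $h=0$.

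The key computation is to bound the triple integral in \eqref{FractionDVDeriv2} by splitting according to the size of $|t-s|$. Writing $\tau=t-s$ and using Fubini to integrate first in the pair $(t,s)$ with $\tau$ fixed, one gets, for the part where $|\tau|<h_0$,
$$
\int_{|\tau|<h_0}\frac{1}{|\tau|^{1+\beta}}\left(\int_{t_0}^{t_1}\int_V|D\bv(x,t)-D\bv(x,t-\tau)|^2\,dx\,dt\right)d\tau\le C\int_{|\tau|<h_0}\frac{|\tau|^{\gamma}}{|\tau|^{1+\beta}}\,d\tau = \frac{2C}{\gamma-\beta}h_0^{\gamma-\beta},
$$
where the inner double integral was estimated by \eqref{FractionDVDeriv} after extending, harmlessly, the $t$-range (one should be slightly careful to apply \eqref{FractionDVDeriv} on a fixed slightly larger interval $[t_0',t_1']\Subset(0,T)$, or else to absorb the mismatch; this is a routine bookkeeping point). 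For the remaining part where $h_0\le|\tau|\le t_1-t_0$, the weight $|\tau|^{-1-\beta}$ is bounded by $h_0^{-1-\beta}$, and the inner double integral is bounded crudely by $4\|D\bv\|_{L^2(V\times[t_0,t_1])}^2$ after using $|a-b|^2\le 2|a|^2+2|b|^2$; multiplying by the length of the $\tau$-interval gives a term of the form $A'\|D\bv\|_{L^2(V\times[t_0,t_1])}^2$. Adding the two contributions yields \eqref{FractionDVDeriv2} with $A=A(p,\beta,n,t_0,t_1,V)$ collecting $h_0^{\gamma-\beta}/(\gamma-\beta)$, $h_0^{-1-\beta}(t_1-t_0)$ and absolute constants; note $D\bv\in L^2(V\times[t_0,t_1])$ by \eqref{naturalbounds}, so the right-hand side is finite.

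I do not expect any serious obstacle here — the estimate is essentially a reindexing of \eqref{FractionDVDeriv} together with the convergence of $\int_0^{h_0}|\tau|^{\gamma-1-\beta}\,d\tau$, which is exactly where the restriction $\beta<\tfrac12-\tfrac1p$ in \eqref{BetaInterval} is used. The only mildly delicate point is the edge effect when $|\tau|$ is comparable to $t_1-t_0$ and the translated time $t-\tau$ (or $t+\tau$) leaves $[t_0,t_1]$: one handles this either by first proving \eqref{FractionDVDeriv} on a marginally larger interval (which the Proposition already permits, since $[t_0,t_1]\Subset(0,T)$ is arbitrary) or by simply restricting the inner integration in $t$ to those $t$ with $t-\tau\in[t_0,t_1]$ and still using \eqref{FractionDVDeriv}. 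Either way the argument is the standard one recorded in Proposition 3.4 of \cite{DuzMin} and Proposition 2.19 of \cite{DuzMinSte}, and it is what makes $D\bv$ lie in $L^2_{\mathrm{loc}}\big((0,T);\,\cdot\big)$ with a fractional time regularity of any order below $\tfrac12-\tfrac1p$, which is precisely the input needed for the parabolic Hausdorff dimension reduction in Theorem \ref{secondThm}.
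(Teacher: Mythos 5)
Your argument is correct and is exactly the standard Nikolskii-to-fractional-Sobolev argument that the paper invokes by citing Proposition 3.4 of \cite{DuzMin} and Proposition 2.19 of \cite{DuzMinSte}: split the double time integral at $|t-s|=h_0$, use \eqref{FractionDVDeriv} together with the convergence of $\int_0^{h_0}\tau^{(\frac12-\frac1p)-\beta-1}d\tau$ (this is where \eqref{BetaInterval} enters) on the near-diagonal part, and the crude $L^2$ bound on the off-diagonal part. The handling of the endpoint/translation issue by restricting the $t$-range so that both times lie in $[t_0,t_1]$ is also fine, so there is nothing to add.
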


\subsection{Compactness}
Now we will discuss the compactness properties of weak solutions.  Roughly speaking, we will show that any ``bounded" sequence of weak solutions to systems of the form \eqref{mainPDE} has a subsequence that converges ``strongly" to another weak solution of this type of system.  Our mail tools will be the identities \eqref{MainIdentity} and \eqref{MainIdentity2},  the energy estimates \eqref{EnergyBound1} and \eqref{EnergyBound2} and a compactness result due to J. P. Aubin \cite{Aubin}.  

\begin{prop}\label{compactProp}
Let $\psi^k\in C^1(\R^m)$ satisfy \eqref{UnifConv} and $F^k\in C^1(\Mmn)$ 
satisfy \eqref{UnifConv2} for each $k\in \N$.  Suppose $(\bv^k)_{k\in \N}$ is a sequence of weak solutions of
$$
\partial_t\left(D\psi^k(\bv^k)\right)=\normalfont{\Div} DF^k(D\bv^k)
$$
in $U\times (0,T)$ and assume 
\begin{equation}\label{UnifL2Bound}
\sup_{k\in\N}\int^T_0\int_U(|\bv^k|^2+|D\bv^k|^2)dxdt <\infty.
\end{equation}
Then there is a subsequence $(\bv^{k_j})_{j\in \N}$ and $\bv\in H^1(U\times (0,T);\R^m)$ such that
for each open $V\subset\subset U$ and interval $[t_0,t_1]\subset (0,T)$, 
\begin{equation}\label{StrongConv}
\bv^{k_j}\rightarrow \bv \; \text{in}\;C([t_0,t_1]; H^1(V;\R^m))
\end{equation}
and 
\begin{equation}\label{StrongConvVtee}
\bv^{k_j}_t\rightarrow \bv_t \; \text{in}\;L^2(U\times (0,T);\R^m).
\end{equation}
Moreover, there is $\psi\in C^1(\R^m)$ that satisfies \eqref{UnifConv} and $F\in C^1(\Mmn)$ that satisfies \eqref{UnifConv2} for which $\bv$ is a weak solution of 
\eqref{mainPDE} in $U\times (0,T)$.
\end{prop}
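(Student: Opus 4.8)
The plan is to extract the convergences in two stages: first compactness of the solutions $\bv^k$, and then compactness of the nonlinearities $D\psi^k$ and $F^k$, and finally to pass to the limit in the weak formulation. For the first stage, I would begin by fixing $V\subset\subset U$ and $[t_0,t_1]\subset(0,T)$ and applying the energy estimate \eqref{EnergyBound1} with a cutoff $\eta$ that is $1$ on $V\times[t_0,t_1]$: combined with the uniform bound \eqref{UnifL2Bound} this gives a uniform bound on $\max_t\int_V\eta^2|\bv^k|^2$ and on $\iint\eta^2|D\bv^k|^2$. Feeding this back into \eqref{EnergyBound2} (with a slightly larger cutoff) yields a uniform bound on $\max_t\int_V|D\bv^k|^2$ and on $\iint_{V\times[t_0,t_1]}|\bv^k_t|^2$. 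Thus $(\bv^k)$ is bounded in $L^\infty([t_0,t_1];H^1(V;\R^m))$ with time derivatives bounded in $L^2([t_0,t_1];L^2(V;\R^m))$. By the Aubin--Lions lemma \cite{Aubin} (using $H^1(V)\hookrightarrow\hookrightarrow L^2(V)\hookrightarrow L^2(V)$, or more precisely the version giving $C([t_0,t_1];L^2)$), a subsequence converges strongly in $C([t_0,t_1];L^2(V;\R^m))$ and weakly-$*$ in $L^\infty([t_0,t_1];H^1(V))$, while $\bv^k_t\rightharpoonup\bv_t$ in $L^2$. Exhausting $U\times(0,T)$ by countably many such $V\times[t_0,t_1]$ and diagonalizing produces a single subsequence and a limit $\bv\in H^1(U\times(0,T);\R^m)$ (the global $H^1$ bound coming directly from \eqref{UnifL2Bound} and the energy estimates applied on exhausting subdomains).

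The heart of the argument — and the step I expect to be the main obstacle — is upgrading the $L^2$-in-space convergence to the strong $H^1$-in-space convergence \eqref{StrongConv} and the strong $L^2$ convergence of time derivatives \eqref{StrongConvVtee}. Here I would imitate part 4 of the proof of Proposition \ref{IdentityLemma} and the identity \eqref{uTimeIndepIdentity}. Using the uniform convexity \eqref{UnifConv2} of $F^k$ (with the same $\lambda$), for fixed $t$,
\[
\frac{\lambda}{2}\int_V|D\bv^k(x,t)-D\bv(x,t)|^2dx\le \int_V u\big(F^k(D\bv^k)-F^k(D\bv)-DF^k(D\bv)\cdot(D\bv^k-D\bv)\big)dx,
\]
so it suffices to control $\int_V u\,F^k(D\bv^k(\cdot,t))dx$ and show the cross terms vanish. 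The quantity $\int_U u F^k(D\bv^k(x,t))dx$ is handled by \eqref{uTimeIndepIdentity}: writing it via its value at $t_0$ plus a time integral of $\int_U[-DF^k(D\bv^k)Du-u\,\partial_t(D\psi^k(\bv^k))]\cdot\bv^k_t\,dx$, one sees this family is equicontinuous in $t$ (the integrand is bounded in $L^1$ uniformly in $k$ by the energy estimates and $|DF^k(M)|\le\Lambda|M|$, $|\partial_t(D\psi^k(\bv^k))|=|D^2\psi^k(\bv^k)\bv^k_t|\le\Theta|\bv^k_t|$) and converges as $k\to\infty$ using the already-established strong $L^2$ convergence of $\bv^k$ and weak $L^2$ convergence of $\bv^k_t$, modulo first extracting weak limits of the nonlinearities — see below. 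An Arzelà--Ascoli argument then gives uniform-in-$t$ convergence of $\int_U uF^k(D\bv^k(\cdot,t))dx$ to the corresponding quantity for $\bv$, and plugging back into the convexity inequality yields $\sup_{t\in[t_0,t_1]}\int_V|D\bv^k-D\bv|^2dx\to 0$, i.e. \eqref{StrongConv}. For \eqref{StrongConvVtee}, I would use the other identity in the same spirit: from \eqref{uTimeIndepIdentity} integrated over $[t_0,t_1]$, the term $\iint u\,\partial_t(D\psi^k(\bv^k))\cdot\bv^k_t=\iint u\,D^2\psi^k(\bv^k)\bv^k_t\cdot\bv^k_t\ge\theta\iint u|\bv^k_t|^2$ is, up to lower-order terms that converge, equal to a difference of boundary-in-time values of $\int u F^k(D\bv^k)$ plus $\iint DF^k(D\bv^k)Du\cdot\bv^k_t$; combining the resulting convergence of $\iint u|\bv^k_t|^2$ with $\bv^k_t\rightharpoonup\bv_t$ gives strong convergence by the usual weak-convergence-plus-norm-convergence argument.

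The remaining point is the behavior of the nonlinearities $\psi^k,F^k$. By \eqref{ZeroDeriv} (which we may assume after normalizing each $\psi^k,F^k$ as in Section 2) together with the uniform Lipschitz bounds on $D\psi^k$ and $DF^k$ from \eqref{UnifConv},\eqref{UnifConv2}, the families $\{\psi^k\},\{F^k\}$ are locally uniformly bounded and equi-Lipschitz on every compact set, and $\{D\psi^k\},\{DF^k\}$ are equi-Lipschitz; by Arzelà--Ascoli and a diagonal argument over an exhaustion by balls, a further subsequence satisfies $\psi^k\to\psi$ in $C^1_{\text{loc}}(\R^m)$ and $F^k\to F$ in $C^1_{\text{loc}}(\Mmn)$, and $\psi,F$ inherit \eqref{UnifConv},\eqref{UnifConv2} by passing to the limit in those inequalities. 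It remains to pass to the limit in the weak formulation \eqref{WeakSolnCond}: for $\bw\in C^\infty_c(U\times(0,T);\R^m)$, $\iint D\psi^k(\bv^k)\cdot\bw_t\to\iint D\psi(\bv)\cdot\bw_t$ because $D\psi^k(\bv^k)\to D\psi(\bv)$ strongly in $L^2_{\text{loc}}$ (from $D\psi^k\to D\psi$ locally uniformly, the equi-Lipschitz bound, and $\bv^k\to\bv$ in $L^2_{\text{loc}}$), and $\iint DF^k(D\bv^k)\cdot D\bw\to\iint DF(D\bv)\cdot D\bw$ because $DF^k(D\bv^k)\to DF(D\bv)$ strongly in $L^2_{\text{loc}}$ (using now the \emph{strong} $H^1$ convergence \eqref{StrongConv}, which is exactly why that step was needed). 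Hence $\bv$ is a weak solution of \eqref{mainPDE}, and the membership $\bv\in H^1(U\times(0,T);\R^m)$ together with \eqref{StrongConv},\eqref{StrongConvVtee} completes the proof. I should note that the circularity between "passing to the limit in \eqref{uTimeIndepIdentity}" and "knowing the limits of the nonlinearities" is only apparent: one first extracts the $C^1_{\text{loc}}$ limits of $\psi^k,F^k$, then the $L^2$-strong limit of $\bv^k$, and only then runs the convexity/Arzelà--Ascoli argument for \eqref{StrongConv}.
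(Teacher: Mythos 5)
Your overall skeleton (energy estimates \eqref{EnergyBound1}--\eqref{EnergyBound2}, Aubin compactness to get $C([t_0,t_1];L^2(V))$ convergence, Arzel\`a--Ascoli for $\psi^k,F^k$, convexity of $F^k$ to reduce \eqref{StrongConv} to convergence of $t\mapsto\int_U u\,F^k(D\bv^k(\cdot,t))\,dx$) matches the paper up to that point. But the step you yourself call the heart of the argument has a genuine gap. To pass to the limit in \eqref{uTimeIndepIdentity} you must handle (a) the anchor value $\int_U u\,F^k(D\bv^k(\cdot,s))\,dx$ at some fixed time and (b) the time integral of $-\int_U u\,D^2\psi^k(\bv^k)\bv^k_t\cdot\bv^k_t\,dx$. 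Neither converges under the convergences you have at that stage: (a) is equivalent (by uniform convexity and weak convergence of gradients) to exactly the strong gradient convergence you are trying to prove, and (b) is quadratic in $\bv^k_t$, so weak $L^2$ convergence of $\bv^k_t$ gives only $\liminf\ge$, not equality. Your remark that the circularity is ``only apparent'' resolves the harmless part (the $C^1_{\mathrm{loc}}$ limits of $\psi^k,F^k$) but not this one. The paper breaks the circle with a device absent from your proposal: it first extracts a weak limit $\xi$ of $DF^{k_j}(D\bv^{k_j})$, obtains the limit equation $\partial_t(D\psi(\bv))=\Div\xi$, and then uses the \emph{first} identity \eqref{MainIdentity} (whose terms involve only $\bv^{k_j}$, $D\bv^{k_j}$ and products that converge by strong-times-weak) both for $\bv^{k_j}$ and for the limit equation, to conclude $\lim_j\iint\phi\,DF^{k_j}(D\bv^{k_j})\cdot D\bv^{k_j}=\iint\phi\,\xi\cdot D\bv$; monotonicity then yields strong space-time $L^2_{\mathrm{loc}}$ convergence of $D\bv^{k_j}$ and $\xi=DF(D\bv)$, so $\bv$ is a weak solution \emph{before} the second identity is invoked. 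Only then is the second identity \eqref{MainIdentity2}/\eqref{LastThingCompacntess} used, and again not by direct passage to the limit but by a comparison: the right-hand sides converge, both left-hand terms are lower semicontinuous, and the limit $\bv$ satisfies the same identity with equality, which forces each liminf to equal its expected value; this is what gives $\bv^{k_j}_t\to\bv_t$ and the pointwise-in-$t$ convergence of $\int\phi F^{k_j}(D\bv^{k_j}(\cdot,t))\,dx$.

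A second, related flaw is your equicontinuity claim: a uniform $L^1_t$ bound on $\frac{d}{dt}\int_U u\,F^k(D\bv^k(\cdot,t))\,dx$ gives uniform boundedness and bounded variation, not equicontinuity. The term $\int_U u\,D^2\psi^k(\bv^k)\bv^k_t\cdot\bv^k_t\,dx$ is only bounded in $L^1_t$ and need not be uniformly integrable a priori, so Arzel\`a--Ascoli does not apply at that point; in the paper, uniform integrability (and hence the upgrade to uniform-in-$t$ convergence, i.e.\ \eqref{StrongConv}) is obtained only \emph{after} the strong convergences of $D\bv^{k_j}$ and $\bv^{k_j}_t$ are in hand, because then the derivative sequence converges in $L^1([0,T])$. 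To repair your proposal you would need to insert the Minty-type step with \eqref{MainIdentity} and the liminf-comparison with the limit identity before attempting the $C([t_0,t_1];H^1(V))$ statement; as written, the chain of implications \eqref{StrongConv} $\Rightarrow$ \eqref{StrongConvVtee} $\Rightarrow$ weak-formulation limit cannot be started.
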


\begin{proof}
1. By hypothesis, we have that
$$
|D\psi^k(z_1)-D\psi^k(z_2)|\le \Theta|z_1-z_2|\quad (z_1,z_2\in \R^m)
$$
for each $k\in \N$. As noted above, we may assume without loss of generality that $\psi^k$ satisfies \eqref{ZeroDeriv}. Upon making this assumption, we have that the sequence $(D\psi^k)_{k\in \N}$ is both equicontinuous and locally uniformly bounded on $\R^m$.  By the Arzel\`a-Ascoli Theorem, there is a subsequence  $(\psi^{k_j})_{j\in \N}$ and $\psi \in C^{1}(\R^m)$ such that $\psi^{k_j} \rightarrow \psi$ and $D\psi^{k_j} \rightarrow D\psi$ locally uniformly on $\R^m$.  Moreover, $\psi$ satisfies \eqref{UnifConv}.  Similarly, there is a subsequence $(F^{k_j})_{j\in \N}$ and $F\in C^1(\Mmn)$ such that $F^{k_j} \rightarrow F$ and $DF^{k_j} \rightarrow DF$ locally uniformly on $\Mmn$ and $F$ satisfies  \eqref{UnifConv2}.

\par  Also note by \eqref{UnifL2Bound} that there is $\bv\in L^2((0,T); H^1(U;\R^m))$ and a subsequence of $(\bv^{k_j})_{j\in \N}$ (not relabeled) such that 
$$
\bv^{k_j}\rightharpoonup \bv
$$
in $L^2((0,T); H^1(U;\R^m))$.  In view of Corollary \ref{EnergyBound1Cor}, $\bv$ also satisfies \eqref{naturalbounds}. If we can establish \eqref{StrongConv} for each open $V\subset\subset U$ and an interval $[t_0,t_1]\subset (0,T)$, we can then pass to the limit in
\begin{equation}\label{WeakEqnKay}
\int^T_0\int_U D\psi^k(\bv^k)\cdot \bw_t dxdt=\int^T_0\int_U DF^k(D\bv^k)\cdot D\bw dxdt,
\end{equation}
as $k=k_j\rightarrow \infty$ for each $\bw\in C^\infty_c(U\times(0,T); \R^m)$. It would then follow that $\bv$ is necessarily a weak solution of \eqref{mainPDE} in $U\times (0,T)$. Thus, we focus on proving \eqref{StrongConv}; along the way we will also verify \eqref{StrongConvVtee}. We finally note that since $\partial U$ is smooth, it suffices to verify 
\eqref{StrongConv} for $V$ with smooth boundary $\partial V$.

\par 2.  By Corollary \ref{EnergyBound1Cor} and our assumptions on $\psi^k$ and $F^k$,
\begin{equation}\label{BasicBoundCompacnt}
\sup_{j\in \N}\left\{\max_{t\in [t_0,t_1]}\int_V(|\bv^{k_j}(x,t)|^2 +|D\bv^{k_j}(x,t)|^2)dx + \int^{t_1}_{t_0}\int_V |\bv^{k_j}_t|^2dxdt 
\right\}<\infty.
\end{equation}
This bound implies that $\bv^{k_j} : [t_0,t_1]\rightarrow L^2(V;\R^m)$ is uniformly equicontinuous and \newline $(\bv^{k_j}(\cdot,t))_{j\in \N}\subset H^1(V;\R^m)$ is uniformly bounded independently of $t\in [t_0,t_1]$. 
Since $H^{1}(V;\R^m)\newline\subset L^2(V;\R^m)$ with compact embedding, there is a further subsequence (not relabeled) such that 
\begin{equation}\label{halfStrongConv}
\bv^{k_j}\rightarrow \bv \; \text{in}\;C([t_0,t_1]; L^2(V;\R^m)).
\end{equation}
This compactness is due to a well known result of J. P. Aubin \cite{Aubin}; see also \cite{Simon} for an extended discussion.

\par Since $(\bv^{k_j}(\cdot,t))_{j\in \N}$ is bounded in $H^1(V;\R^m)$ uniformly in $t\in[t_0,t_1]$, it follows from \eqref{halfStrongConv} that 
\begin{equation}\label{UnifWeakLim}
D\bv^{k_j}(\cdot,t)\rightharpoonup D\bv(\cdot,t)\; \text{in $L^2(V ;\Mmn)$ uniformly for}\;t\in[t_0,t_1].
\end{equation}
We also have (up to a subsequence) that 
$$
DF(D\bv^{k_j})\rightharpoonup \xi\;\; \text{in}\;\;L^2(V\times [t_0,t_1];\Mmn).
$$
Combined with \eqref{halfStrongConv}, we can send $k=k_j\rightarrow\infty$ in \eqref{WeakEqnKay} to find 
\begin{equation}\label{DistrEqn}
\partial_t(D\psi(\bv))=\Div(\xi).
\end{equation}
\par 3. We will now use the identity \eqref{MainIdentity}. Suppose $\phi\in C^\infty_c(U\times(0,T))$ is supported in $V\times (t_0,t_1)$ and is nonnegative. Then for $t,s \in [t_0,t_1]$ 
\begin{align}\label{veekayJayIdentity}
 \int_U (\psi^{k_j})^*\left(D\psi^{k_j}(\bv^{k_j}(x,t))\right)\phi(x,t) dx+\int^t_s\int_U \phi DF^{k_j}(D\bv^{k_j})\cdot D\bv^{k_j} dxd\tau = \hspace{1in} \nonumber \\
\int_U (\psi^{k_j})^*\left(D\psi^{k_j}(\bv^{k_j}(x,s))\right)\phi(x,s) dx+\int^t_s\int_U\left((\psi^{k_j})^*\left(D\psi^{k_j}(\bv^{k_j})\right)  \phi_t - \bv^{k_j}\cdot DF^{k_j}(D\bv^{k_j})D\phi \right)dxd\tau.
\end{align}
Sending $j\rightarrow \infty$ gives 
\begin{align*}
 \int_U \psi^*\left(D\psi(\bv(x,t))\right)\phi(x,t) dx+\lim_{j\rightarrow\infty}\int^t_s\int_U \phi DF^{k_j}(D\bv^{k_j})\cdot D\bv^{k_j} dxd\tau = \hspace{.5in}  \\
\int_U \psi^*\left(D\psi(\bv(x,s))\right)\phi(x,s) dx+\int^t_s\int_U\left(\psi^*\left(D\psi(\bv)\right)  \phi_t - \bv\cdot \xi D\phi \right)dxd\tau.
\end{align*}
\par On the other hand, we can use equation \eqref{DistrEqn} to derive the identity 
\begin{align*}
 \int_U \psi^*\left(D\psi(\bv(x,t))\right)\phi(x,t) dx+\int^t_s\int_U \phi \xi \cdot D\bv dxd\tau = \hspace{1in} \\
\int_U \psi^*\left(D\psi(\bv(x,s))\right)\phi(x,s) dx+\int^t_s\int_U\left(\psi^*\left(D\psi(\bv)\right)  \phi_t - \bv\cdot \xi D\phi \right)dxd\tau.
\end{align*}
A proof of this identity can be made similar to the one given above for Proposition \ref{IdentityLemma}. Therefore, 
$$
\lim_{j\rightarrow\infty}\int^t_s\int_U \phi DF^{k_j}(D\bv^{k_j})\cdot D\bv^{k_j} dxd\tau=\int^t_s\int_U \phi \xi \cdot D\bv dxd\tau.
$$
Consequently,
\begin{align*}
&\lim_{j\rightarrow \infty}\left\{\lambda\int^t_s\int_V\phi|D\bv^{k_j}-D\bv|^2dxd\tau\right\} \hspace{3in}\\
&\hspace{1in}\le \lim_{j\rightarrow \infty}\int^t_s\int_V\phi(DF^{k_j}(D\bv^{k_j})-DF^{k_j}(D\bv))\cdot (D\bv^{k_j}-D\bv)dxd\tau\\
&\hspace{1in}=0.
\end{align*}
It follows that $D\bv^{k_j} \rightarrow D\bv$ in $L^2_{\text{loc}}(V\times(t_0,t_1); \Mmn)$ and that $\xi=DF(\bv)$.  In view of  \eqref{DistrEqn} and the arbitrariness of $V$ and $[t_0,t_1]$, we conclude that $\bv$ is a weak solution of \eqref{mainPDE} in $U\times(0,T)$. 

\par 4. By \eqref{BasicBoundCompacnt} and \eqref{halfStrongConv}, $\bv^{k_j}_t\rightharpoonup \bv_t$ in $L^2_{\text{loc}}(V\times(t_0,t_1); \R^m)$. We have from part $(ii)$ of Proposition \ref{IdentityLemma} that
\begin{align}\label{LastThingCompacntess}
\int_U\phi(x,t)F^{k_j}(D\bv^{k_j}(x,t))dx+\int^t_0\int_U\phi D^2\psi^{k_j}(\bv^{k_j})\bv^{k_j}_t\cdot \bv_t^{k_j} dxds\nonumber\\
=\int^t_0\int_U\left(\phi_t F(D\bv^{k_j}) - \bv^{k_j}_t \cdot DF^{k_j}(D\bv^{k_j})D\phi \right)dxds.
\end{align}
for each nonnegative $\phi\in C^\infty_c(U\times(0,T))$. Letting $j\rightarrow \infty$ gives 
\begin{align*}
\liminf_{j\rightarrow\infty}\int_U\phi(x,t)F^{k_j}(D\bv^{k_j}(x,t))dx+\liminf_{j\rightarrow\infty}\int^t_0\int_U\phi D^2\psi^{k_j}(\bv^{k_j})\bv^{k_j}_t\cdot \bv_t^{k_j} dxds\\
\le \int^t_0\int_U\left(\phi_t F(D\bv) - \bv_t \cdot DF(D\bv)D\phi \right)dxds.
\end{align*}
On the other hand, since $\bv$ is a weak solution 
\begin{align*}
\int_U\phi(x,t)F(D\bv(x,t))dx+\int^t_0\int_U\phi D^2\psi(\bv)\bv_t\cdot \bv_t dxds\\
=\int^t_0\int_U\left(\phi_t F(D\bv) - \bv_t \cdot DF(D\bv)D\phi \right)dxds.
\end{align*}
\par Therefore, 
$$
\liminf_{j\rightarrow\infty}\int_U\phi(x,t)F^{k_j}(D\bv^{k_j}(x,t))dx= \int_U\phi(x,t)F(D\bv(x,t))dx
$$
and
$$
\liminf_{j\rightarrow\infty}\int^t_0\int_U\phi D^2\psi^{k_j}(\bv^{k_j})\bv^{k_j}_t\cdot \bv_t^{k_j} dxds = \int^t_0\int_U\phi D^2\psi(\bv)\bv_t\cdot \bv_t dxds.
$$
for each $t\in (0,T)$. It is now routine to check using the uniform convexity of $\psi^{k_j}$ and $F^{k_j}$, \eqref{halfStrongConv} and \eqref{UnifWeakLim} that $\bv^{k_j}(\cdot,t)\rightarrow \bv(\cdot,t)$ in $H^1_{\text{loc}}(U;\R^m)$ for each $t\in (0,T)$ and that $\bv^{k_j}_t\rightarrow \bv_t$ in $L^2_{\text{loc}}(U\times(0,T); \R^m)$. In particular, we conclude \eqref{StrongConvVtee}.  

\par 5. With these strong convergence assertions and \eqref{LastThingCompacntess}, we have that the sequence of functions
$$
[0,T]\ni t\mapsto \int_U\phi(x,t)F^{k_j}(D\bv^{k_j}(x,t))dx
$$
is uniformly equicontinuous. Indeed, the arguments we made above imply that
\begin{align*}
&\frac{d}{dt}\int_U\phi F^{k_j}(D\bv^{k_j})dx=\int_U\left(\phi_t F^{k_j}(D\bv^{k_j}) - \bv^{k_j}_t \cdot DF^{k_j}(D\bv^{k_j})D\phi \right)dxds
 \\
&\hspace{2in}-\int_U\phi D^2\psi^{k_j}(\bv^{k_j})\bv^{k_j}_t\cdot \bv_t^{k_j} dxds
\end{align*}
converges in $L^1([0,T])$ and so is uniformly integrable. Combined with the pointwise convergence of $\bv^{k_j}(\cdot,t)\rightarrow \bv(\cdot,t)$ in $H^1_{\text{loc}}(U;\R^m)$, we conclude
$$
\lim_{j\rightarrow\infty}\int_U\phi(x,t)F^{k_j}(D\bv^{k_j}(x,t))dx=\int_U\phi(x,t)F(D\bv(x,t))dx
$$
uniformly in $[0,T]$. Recalling \eqref{UnifWeakLim} and that $F$ is uniformly convex, we finally deduce \eqref{StrongConv}.
\end{proof}

\section{Partial regularity}
In this section, we will complete the main goal of this work which is to verify Theorems \ref{mainThm} and \ref{secondThm}.  In order to establish Theorem \ref{mainThm}, we will use the ideas that go into proving Proposition \ref{compactProp} to deduce a local H\"older regularity criterion for weak solutions. 
Then we will use a standard Poincar\'e inequality and Lebesgue differentiation to show almost everywhere H\"older regularity of the spatial gradient of weak solutions.  As for Theorem \ref{secondThm}, we will employ a more refined Poincar\'e inequality that is based on the fractional time differentiability of weak solutions that was asserted in Corollary \ref{TimeDiffDVCor}.

\subsection{A local regularity criterion}
Let us denote a parabolic cylinder of radius $r>0$ centered at $(x,t)$ as
$$
Q_r(x,t):=B_r(x)\times(t-r^2/2, t+r^2/2)
$$
and the average of $\bw$ over $Q_r=Q_r(x,t)$ as
$$
\bw_{Q_r}:=\ffint\;\bw=\frac{1}{|Q_r|}\iint_{Q_r}\bw(y,s)dyds.
$$
A quantity that will be of great utility to us is 
$$
E(x,t,r):=\ffint\;\; \left|\frac{\bv - (\bv)_{Q_r}- (D\bv)_{Q_r}(y-x) }{r}\right|^2dyds + \ffint|D\bv - (D\bv)_{Q_r}|^2 dyds.
$$
Here $\bv$ is a weak solution of  \eqref{mainPDE} in $U\times(0,T)$ and $Q_r=Q_r(x,t)\subset U\times(0,T)$ with $r>0$.  

\par We will now derive an important decay property of $E$.

\begin{lem}\label{BlowUpLemma}
Assume $\bv$ is a weak solution of \eqref{mainPDE} in $U\times(0,T)$. For each $L>0$, there are $\epsilon, \vartheta, \rho\in (0,1/2)$ for which 
\begin{equation}\label{BlowLemHypoth}
\begin{cases}
Q_r(x,t)\subset U\times(0,T), \quad r<\rho\\
|(\bv)_{Q_r}|, |(D\bv)_{Q_r}|\le L\\
E(x,t,r)\le \epsilon^2
\end{cases} \nonumber
\end{equation}
implies 
$$
E(x,t,\vartheta r)\le \frac{1}{2}E(x,t,r).
$$
\end{lem}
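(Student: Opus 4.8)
The plan is to argue by contradiction via a blow-up/compactness argument, exactly in the spirit of De Giorgi's excess-decay lemma adapted to parabolic systems. Suppose the conclusion fails for some $L>0$ and some fixed $\vartheta,\rho$ (to be chosen). Then for every $j\in\N$ there are points $(x_j,t_j)$ and radii $r_j<1/j$ with $Q_{r_j}(x_j,t_j)\subset U\times(0,T)$, with $|(\bv)_{Q_{r_j}}|,|(D\bv)_{Q_{r_j}}|\le L$, with $\lambda_j^2:=E(x_j,t_j,r_j)\to 0$, yet $E(x_j,t_j,\vartheta r_j)>\tfrac12\lambda_j^2$. The first step is to rescale: set
$$
\bv_j(y,s):=\frac{\bv(x_j+r_jy,\;t_j+r_j^2 s)-(\bv)_{Q_{r_j}}-(D\bv)_{Q_{r_j}}(r_j y)}{\lambda_j r_j},
$$
so that each $\bv_j$ lives on $Q_1=B_1\times(-1/2,1/2)$, has $(\bv_j)_{Q_1}=0$, $(D\bv_j)_{Q_1}=0$, and by construction
$$
\iint_{Q_1}\Big(|\bv_j|^2+|D\bv_j|^2\Big)\,dyds\le C,\qquad E_j(0):=\iint_{Q_1}\Big|\tfrac{\bv_j - (D\bv_j)(y)\cdot 0}{1}\Big|^2+\iint_{Q_1}|D\bv_j|^2 = 1.
$$
Moreover $\bv_j$ is a weak solution of the rescaled system $\partial_t(D\psi_j(\bv_j))=\Div DF_j(D\bv_j)$ with rescaled, recentered potentials $\psi_j,F_j$ built from $\psi,F$ by translating the argument by the (bounded) averages and dividing by $\lambda_j$; the key point is that $\psi_j,F_j$ still satisfy \eqref{UnifConv}, \eqref{UnifConv2} with the \emph{same} constants, and $D\psi_j\to A$, $DF_j\to B$ locally uniformly along a subsequence, where $A,B$ are the constant (matrix-valued, from the Hessians of $\psi,F$ evaluated at the limit of the averages) linear maps — here one uses $\psi,F\in C^2$ and the boundedness $|(\bv)_{Q_{r_j}}|,|(D\bv)_{Q_{r_j}}|\le L$ to extract convergent subsequences of the averages.

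**Key steps.** First, apply the compactness machinery of Proposition~\ref{compactProp} (together with the energy estimates \eqref{EnergyBound1}, \eqref{EnergyBound2} applied to the $\bv_j$) to extract a subsequence with $\bv_j\to\bv_\infty$ in $C([t_0,t_1];H^1(V))$ on every $V\subset\subset B_1$ and $[t_0,t_1]\subset(-1/2,1/2)$, and $\partial_t\bv_j\to\partial_t\bv_\infty$ in $L^2_{\rm loc}$. Passing to the limit in the weak formulation, and using $D\psi_j\to A$, $DF_j\to B$ locally uniformly, shows $\bv_\infty$ is a weak solution of the constant-coefficient linear parabolic system $\partial_t(A\bv_\infty)=\Div(B\,D\bv_\infty)$, which by \eqref{UnifConv}, \eqref{UnifConv2} is uniformly parabolic; hence $\bv_\infty$ is smooth in the interior. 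Second, invoke the standard $L^2$ Campanato-type decay estimate for such smooth solutions of linear parabolic systems: for every $\vartheta\in(0,1/2)$,
$$
\iint_{Q_\vartheta}\Big|\tfrac{\bv_\infty-(\bv_\infty)_{Q_\vartheta}-(D\bv_\infty)_{Q_\vartheta}y}{\vartheta}\Big|^2+\iint_{Q_\vartheta}|D\bv_\infty-(D\bv_\infty)_{Q_\vartheta}|^2\;\le\; C_*\,\vartheta^2\iint_{Q_1}\Big(|\bv_\infty|^2+|D\bv_\infty|^2\Big)\le C_*\vartheta^2,
$$
since $\bv_\infty$ inherits the normalization $(\bv_\infty)_{Q_1}=0$, $(D\bv_\infty)_{Q_1}=0$ and the bound $1$ on the total energy in $Q_1$. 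Now fix $\vartheta$ small enough that $C_*\vartheta^2<1/4$. Third, the strong convergence $\bv_j\to\bv_\infty$ in $C_{\rm loc}(H^1)$ is exactly strong enough to pass the rescaled excess $E_j(\vartheta):=\iint_{Q_\vartheta}|\tfrac{\bv_j-(\bv_j)_{Q_\vartheta}-(D\bv_j)_{Q_\vartheta}y}{\vartheta}|^2+\iint_{Q_\vartheta}|D\bv_j-(D\bv_j)_{Q_\vartheta}|^2$ to the limit: $E_j(\vartheta)\to E_\infty(\vartheta)\le C_*\vartheta^2<1/4$. But unwinding the scaling, $E_j(\vartheta)=E(x_j,t_j,\vartheta r_j)/(\lambda_j^2)>1/2$ for all $j$, a contradiction. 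This fixes $\vartheta$; then $\epsilon$ (how small $\lambda_j^2=E$ must be) and $\rho$ (how small $r$) are determined by requiring that the qualitative convergences above already give $E_j(\vartheta)<1/2$ — i.e. one takes $\epsilon,\rho$ as the thresholds extracted from the contradiction.

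**Main obstacle.** The delicate point is the passage to the limit in the rescaled equation and the identification of the limit $\bv_\infty$ as a solution of a linear system. The nonlinear term $DF_j(D\bv_j)$ must converge to $B\,D\bv_\infty$; this requires both the local uniform convergence $DF_j\to B$ (which is where $F\in C^2$ and the difference-quotient structure $DF_j(M)=\frac{1}{\lambda_j}\big(DF(\,(D\bv)_{Q_{r_j}}+\lambda_j M\,)-DF((D\bv)_{Q_{r_j}})\big)\to D^2F(\text{lim})\,M$ enter) \emph{and} the strong $L^2_{\rm loc}$ convergence of $D\bv_j$, which is precisely what the compactness argument of Proposition~\ref{compactProp} — via the two integral identities \eqref{MainIdentity}, \eqref{MainIdentity2} — supplies. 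A secondary subtlety is that $\vartheta$ must be chosen \emph{before} $\epsilon$ and $\rho$: the constant $C_*$ in the linear decay estimate depends only on $n,m,\theta,\Theta,\lambda,\Lambda$ (not on the solution), so $\vartheta$ with $C_*\vartheta^2<1/4$ is legitimate, and only afterwards does the contradiction argument pin down $\epsilon,\rho$. One should also take care that the rescaled averages $|(\bv_j)_{Q_\vartheta}|,|(D\bv_j)_{Q_\vartheta}|$ stay controlled so that the limit potentials remain uniformly convex with the original constants — this is immediate because $|(D\bv)_{Q_{r_j}}|\le L$ and the recentering is by exactly these averages.
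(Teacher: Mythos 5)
Your proposal is correct and follows essentially the same route as the paper: a contradiction/blow-up argument, rescaling to normalized excess one on $Q_1$, compactness via Proposition \ref{compactProp} to pass to a limit solving the constant-coefficient system $\partial_s(D^2\psi(a)\bw)=\Div(D^2F(M)D\bw)$, the $C_*\vartheta^2$ decay estimate for that linear system (which the paper proves by difference quotients and interior smoothness), the choice of $\vartheta$ with $C_*\vartheta^2<1/4$, and transfer of the decay back to $\bv_j$ by the strong $C_{\mathrm{loc}}(H^1)$ convergence, contradicting $E_j(\vartheta)>1/2$. The only slight imprecision is in the recentered potential for $\psi$: because the affine map $(\bv)_{Q_{r_j}}+(D\bv)_{Q_{r_j}}(x-x_j)$ is subtracted, the rescaled $\psi_j$ must carry the $y$-dependent argument $a_j+r_jM_jy+\lambda_jr_jw$ (as in the paper), though since $r_j\to0$ this does not affect the identification of the limit.
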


\begin{proof}
1. We will argue by contradiction. If the assertion is false, there is $L_0>0$ and sequences $(x_k,t_k)\in U\times(0,T)$, $\epsilon_k\rightarrow 0$, $\vartheta_k\equiv\vartheta\in(0,1/2)$ (chosen below), and
$r_k\rightarrow 0$ as $k\rightarrow +\infty$  such that
\begin{equation}\label{ContraUno}
\begin{cases}
Q_{r_k}(x_k,t_k)\subset U\times(0,T)\\
|(\bv)_{Q_{r_k}}|, |(D\bv)_{Q_{r_k}}|\le L_0\\
E(x_k,t_k,r_k)=\epsilon_k^2
\end{cases}
\end{equation}
while
\begin{equation}\label{ContradictionMaybe}
E(x_k,t_k,\vartheta r_k)> \frac{1}{2}\epsilon_k^2.
\end{equation}

\par Define the sequence of mappings
\begin{equation}\label{BlowupSeq}
\bv^k(y,s):=\frac{\bv(x_k+r_ky,t_k + r_k^2s) - (\bv)_{Q_{r_k}}  -(D\bv)_{Q_{r_k}}r_ky}
{\epsilon_k r_k},\nonumber
\end{equation}
for $(y,s)\in Q_1:=Q_1(0,0)$ and $k\in \N$.  As $E(x_k,t_k,r_k)=\epsilon_k^2$, $\bv^k$ satisfies 
\begin{equation}\label{vkkbounds}
\fffint|\bv^k|^2dyds + \fffint|D\bv^k|^2dyds  =1
\end{equation}
for each $k\in \N$.  Moreover, $\bv^k$ is a weak solution of the PDE
\begin{equation}\label{BlowUpEqn}
\partial_s\left[D_w\psi^k(y,\bv^k)\right] = \Div_y\left[DF^k(D\bv^k)\right]
\end{equation}
in $Q_1$.  Here 
$$
\psi^k(y,w):=\frac{\psi(a_k+r_k M_k y +\epsilon_k r_k w)-\psi(a_k+r_k M_k y)-D\psi(a_k+r_k M_k y)\cdot \epsilon_kr_k w}{\epsilon_k^2r_k^2},
$$ 
and
$$
F^k(\xi):=\frac{F( M_k +\epsilon_k \xi)-F( M_k)-DF(M_k)\cdot \epsilon_k \xi}{\epsilon_k^2}.
$$
We are now using the notation $a_k:= (\bv)_{Q_{r_k}}\in \R^m$, $M_k:= (D\bv)_{Q_{r_k}}\in \Mmn$; and in view of \eqref{ContraUno}, these sequences are bounded.  Without any loss of generality, we assume $a_k\rightarrow a\in \R^m$ and $M_k\rightarrow M\in\Mmn$. 

\par 2. Observe that for each $y\in \R^n$, $w\mapsto\psi^k(y,w)$ is uniformly convex and satisfies \eqref{UnifConv}. Moreover, 
$$
\begin{cases}
\psi^k(y,w)\rightarrow \frac{1}{2}D^2\psi(a)w\cdot w\\
D_w\psi^k(y,w)\rightarrow D^2\psi(a)w
\end{cases}
$$
for each $(y,w)\in \R^n\times\R^m$ as $k\rightarrow \infty$.  Likewise, $F^k$ satisfies \eqref{UnifConv} and
$$
\begin{cases}
F^k(\xi)\rightarrow \frac{1}{2}D^2F(M)\xi\cdot \xi\\
DF^k(\xi)\rightarrow D^2F(M)\xi
\end{cases}
$$
for every $\xi\in \Mmn$ as $k\rightarrow \infty$. Furthermore, $\psi^k(y,\cdot)$ and $F^k$ satisfies \eqref{ZeroDeriv} for each $y\in \R^n$. 

\par By \eqref{vkkbounds} and a minor variant of Proposition \ref{compactProp}, there is $\bw\in L^2\left([-1/2,1/2]; H^1(B_1;\R^m)\right)$ and a subsequence $\{\bv^{k_j}\}_{j\in \N}$ satisfying
\begin{equation}\label{CompactnessBlow}
\bv^{k_j}\rightarrow \bw\quad \text{in}\quad C\left(\left[t_0,t_1\right]; H^1(B_R;\R^m)\right)
\end{equation}
for each $[t_0,t_1]\subset (-1/2,1/2)$ and $R\in (0,1)$.
Moreover,
\begin{equation}\label{WeakL22bounds}
\fffint|\bw|^2dyds + \fffint|D\bw|^2dyds = 1.
\end{equation}
Recall that the weak formulation of \eqref{BlowUpEqn} is 
$$
\iint_{Q_1}D\psi^k(y,\bv^{k})\cdot \phi_sdyds=\iint_{Q_1}DF^k(D\bv^{k})\cdot D\phi dyds
$$
for each $\phi\in C_c(Q_1;\R^m)$. Passing to the limit as $k=k_j\rightarrow \infty$ gives that $\bw$ is a weak solution of the linear evolution equation 
\begin{equation}\label{BlowEqnLimit}
\partial_s(D^2\psi(a)\bw)=\Div(D^2F(M)D\bw)
\end{equation}
in $Q_1$ in the sense of Definition \ref{weakSolnLoc}.

\par 3.  We claim that $\bw\in C^\infty(Q_1; \R^m)$. First observe that by Corollaries \ref{EnergyBound1cor} and \ref{EnergyBound2cor}, $\bw$ satisfies 
$$
\max_{|s|\le \frac{1}{2}}\int_{B_1}\eta^2|\bw|^2 dy+ \iint_{Q_1} \eta^2|D\bw|^2dyds 
\le C\iint_{Q_1}\left(\eta|\eta_s|+|D\eta|^2\right) |\bw|^2dyds
$$
and
$$
\max_{|s|\le \frac{1}{2}}\int_{B_1}\eta^2|D\bw|^2 dy + \iint_{Q_1}\eta^2|\bw_s|^2dyds
\le C\iint_{Q_1}\left(\eta|\eta_s|+|D\eta|^2\right) |D\bw|^2dyds
$$
for each nonnegative $\eta\in C^\infty_c(Q_1)$. Here $C=C(\lambda,\Lambda,\theta, \Theta)$.

\par Let $Q_R:=Q_R(0,0)$ for $R\in (0,1)$ and choose $h$ so small that $0<|h|<1-R\le\text{dist}(Q_R,\partial Q_1)$. We define
the time difference quotient
$$
\partial^h_s\bw(y,s):=\frac{\bw(y,s+h)-\bw(y,s)}{h}
$$
for such values of $h$ and $(y,s)\in Q_R$. Also note that $\partial^h_s\bw$ is a weak solution of the system \eqref{BlowEqnLimit} in $Q_R$. So as we observed above, 
$\partial^h_s\bw$ satisfies the energy estimates
$$
\max_{|s|\le \frac{1}{2}R^2}\int_{B_R}\eta^2|\partial^h_s\bw|^2 dy+ \iint_{Q_R} \eta^2|\partial^h_sD\bw|^2dyds 
\le C\iint_{Q_R}\left(\eta|\eta_s|+|D\eta|^2\right) |\partial^h_s\bw|^2dyds
$$
and
$$
\max_{|s|\le \frac{1}{2}R^2}\int_{B_R}\eta^2|\partial^h_sD\bw|^2 dy + \iint_{Q_R}\eta^2|\partial^h_s\bw_s|^2dyds
\le C\iint_{Q_R}\left(\eta|\eta_s|+|D\eta|^2\right) |\partial^h_sD\bw|^2dyds
$$
for $0<|h|<1-R$.  It follows that we have improved integrability of some of the derivatives of $\bw$: $\bw_{ss}\in L^2_{\text{loc}}(Q_R;\R^m)$ and $D\bw_s\in L^2_{\text{loc}}(Q_R;\Mmn)$ (Theorem 3, section 5.8.2 of \cite{Eva}).

\par We can derive the same estimates for the spatial difference quotients
$$
\partial^h_{y_i}\bw(y,s):=\frac{\bw(y+he_i,s)-\bw(y,s)}{h}
$$
for $(y,s)\in Q_R$. Here $i=1,\dots, n$ and $\{e_1\dots, e_n\}$ is the standard basis in $\R^n$.  Consequently, we can conclude $D\bw_{y_i} \in L^2_{\text{loc}}(Q_R;\Mmn)$ for each $i=1,\dots, n$.  In particular, we have that each of the second derivatives of $\bw$ are locally square integrable on $Q_1$.  Furthermore, we can proceed by induction to conclude that space-time derivatives of $\bw$ of all orders are locally square integrable on $Q_1$ which implies that $\bw\in C^\infty(Q;\R^m)$.


\par 4. A close inspection of our justification that $\bw$ is smooth leads us to conclude that we can pointwise bound the higher order space-time derivatives of $\bw$ by the integral $\iint_{Q_1}(|\bw|^2+|D\bw|^2)dyds$.  Using this fact and \eqref{WeakL22bounds}, we then conclude that there is a constant $C_1$, that depends only on $ n, \theta, \Theta, \lambda$ and $\Lambda$, such that 
\begin{align*}
\fthetaint\;\;\left|\frac{\bw-(\bw)_{Q_\vartheta}-(D\bw)_{Q_\vartheta}y}{\vartheta}\right|^2dyds +\fthetaint|D\bw-(D\bw)_{Q_{\vartheta}}|^2dyds \le C_1\vartheta^2.
\end{align*}
Here $Q_{\vartheta}:=Q_{\vartheta}(0,0)$.  Let us now choose $\vartheta_k\equiv\vartheta\in (0,1/2)$ so small that $C_1\vartheta^2\le 1/4$. In view of \eqref{CompactnessBlow}, we have for all sufficiently large $j$ 
\begin{equation}\label{LastBlowUpDisplay}
\fthetaint\;\;\left|\frac{\bv^{k_j}-(\bv^{k_j})_{Q_\vartheta}-(D\bv^{k_j})_{Q_\vartheta}y}{\vartheta}\right|^2dyds +\fthetaint|D\bv^{k_j}-(D\bv^{k_j})_{Q_{\vartheta}}|^2dyds \le \frac{3}{8}.
\end{equation}
However, it is readily verified that inequality \eqref{ContradictionMaybe} implies that the left hand side of \eqref{LastBlowUpDisplay} is larger than 1/2 for all $k\in \N$. Therefore, we have the desired contradiction.   
\end{proof}

\allowdisplaybreaks

\par We now seek to iterate the conclusion of Lemma \ref{BlowUpLemma}. First let us recall a basic fact about the integral averages of $\bv$. Observe for $\tau\in (0,1]$ and $Q_r=Q_r(x,t)\subset U\times(0,T)$, 
\begin{align}\label{IterationV}
|(\bv)_{Q_{\tau r}}- (\bv)_{Q_r}|&= \left|\intQrtau \;\left(\bv - (\bv)_{Q_r}\right)dyds\right|\nonumber\\
&= \left|\intQrtau \;\left(\bv - (\bv)_{Q_r}-(D\bv)_{Q_r}(y-x)\right)dyds\right|\nonumber\\
&\le \left(\intQrtau \; |\bv - (\bv)_{Q_r}-(D\bv)_{Q_r}(y-x)|^2dyds\right)^{1/2} \nonumber \\
&\le \frac{1}{\tau^{n/2+1}}\left(\ffint \; |\bv - (\bv)_{Q_r}-(D\bv)_{Q_r}(y-x)|^2dyds\right)^{1/2} \nonumber \\
&= \frac{r}{\tau^{n/2+1}}\left(\ffint \; \left|\frac{\bv - (\bv)_{Q_r}-(D\bv)_{Q_r}(y-x)}{r}\right|^2dyds\right)^{1/2} \nonumber \\
&\le \frac{r}{\tau^{n/2+1}}E(x,t,r)^{1/2}.
\end{align}
Similarly we have
\begin{align}\label{IterationDV}
|(D\bv)_{Q_{\tau r}}- (D\bv)_{Q_r}|&= \left|\intQrtau \;\left(D\bv - (D\bv)_{Q_r}\right)dyds\right|\nonumber\\
&\le \left(\intQrtau \; |D\bv - (D\bv)_{Q_r}|^2dyds\right)^{1/2} \nonumber \\
&\le \frac{1}{\tau^{n/2+1}}\left(\ffint \; |D\bv - (D\bv)_{Q_r}|^2dyds\right)^{1/2} \nonumber \\
&\le  \frac{1}{\tau^{n/2+1}}E(x,t,r)^{1/2}.
\end{align}

\begin{cor}\label{iterCorollary}
Assume $\bv$ is a weak solution of \eqref{mainPDE} in $U\times(0,T)$.  Let $L>0$ and select $\epsilon,\vartheta,\rho\in (0,1/2)$ as in Lemma \ref{BlowUpLemma}. If
\begin{equation}\label{IterAssump}
\begin{cases}
Q_r(x,t)\subset U\times(0,T), \quad r<\rho\\
|(\bv)_{Q_r}|, |(D\bv)_{Q_r}|< \frac{1}{2}L\\
E(x,t,r)< \epsilon_1^2
\end{cases},
\end{equation}
where $\epsilon_1:=\min\left\{\epsilon, \frac{\vartheta^{n/2+1}}{6}L\right\}$, then 
\begin{equation}\label{InterationK}
\begin{cases}
|(\bv)_{Q_{\vartheta^kr} }|, |(D\bv)_{Q_{\vartheta^k r}}|< L\\
E(x,t,\vartheta^{k}r)\le \frac{1}{2^{k}}E(x,t,r)
\end{cases}
\end{equation}
for each $k\in \N$.
\end{cor}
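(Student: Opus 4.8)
The plan is to prove \eqref{InterationK} by induction on $k$, using Lemma \ref{BlowUpLemma} for the decay of $E$ and the estimates \eqref{IterationV} and \eqref{IterationDV} to keep the averages under control. The base case $k=0$ is exactly the hypothesis \eqref{IterAssump} together with $\epsilon_1 \le \epsilon$. For the inductive step, suppose \eqref{InterationK} holds for all indices $0,1,\dots,k$. Since $E(x,t,\vartheta^k r) \le 2^{-k}E(x,t,r) \le \epsilon_1^2 \le \epsilon^2$ and $|(\bv)_{Q_{\vartheta^k r}}|, |(D\bv)_{Q_{\vartheta^k r}}| < L$, and $\vartheta^k r < \rho$, the hypotheses of Lemma \ref{BlowUpLemma} are met at radius $\vartheta^k r$, so $E(x,t,\vartheta^{k+1}r) \le \tfrac12 E(x,t,\vartheta^k r) \le 2^{-(k+1)}E(x,t,r)$, which is the second line of \eqref{InterationK} at level $k+1$.

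The remaining task is to verify that the averages at radius $\vartheta^{k+1}r$ are still below $L$. Here I would apply \eqref{IterationV} and \eqref{IterationDV} with $\tau = \vartheta$ and the radius $\vartheta^j r$ in place of $r$, telescoping:
\begin{align*}
|(D\bv)_{Q_{\vartheta^{k+1}r}} - (D\bv)_{Q_r}| &\le \sum_{j=0}^{k} |(D\bv)_{Q_{\vartheta^{j+1}r}} - (D\bv)_{Q_{\vartheta^j r}}| \\
&\le \frac{1}{\vartheta^{n/2+1}}\sum_{j=0}^{k} E(x,t,\vartheta^j r)^{1/2} \le \frac{1}{\vartheta^{n/2+1}}\sum_{j=0}^{\infty} 2^{-j/2}\epsilon_1,
\end{align*}
and the geometric series $\sum_j 2^{-j/2} = (1-2^{-1/2})^{-1} < 4$ (or any explicit bound below $6$), so the total increment is at most $\tfrac{4\epsilon_1}{\vartheta^{n/2+1}} \le \tfrac{4}{6}L < \tfrac12 L$ by the choice $\epsilon_1 \le \tfrac{\vartheta^{n/2+1}}{6}L$. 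Combined with $|(D\bv)_{Q_r}| < \tfrac12 L$ from \eqref{IterAssump}, this gives $|(D\bv)_{Q_{\vartheta^{k+1}r}}| < L$. The same telescoping argument with \eqref{IterationV} handles $(\bv)_{Q_{\vartheta^{k+1}r}}$, using that the extra factor $\vartheta^j r \le r < \rho < 1$ only helps; one gets $|(\bv)_{Q_{\vartheta^{k+1}r}} - (\bv)_{Q_r}| \le \tfrac{r}{\vartheta^{n/2+1}}\sum_j 2^{-j/2}\epsilon_1 < \tfrac12 L$ as well (absorbing the harmless factor $r$). This closes the induction.

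The only mild subtlety — and the step I expect to require the most care — is making sure the bookkeeping of constants in the geometric sum is consistent with the stated $\epsilon_1 = \min\{\epsilon,\tfrac{\vartheta^{n/2+1}}{6}L\}$; in particular one must track that at \emph{every} intermediate level $j \le k$ the hypothesis $|(\bv)_{Q_{\vartheta^j r}}|, |(D\bv)_{Q_{\vartheta^j r}}| < L$ needed to invoke Lemma \ref{BlowUpLemma} has genuinely been established before it is used, which is exactly why the induction is set up to carry \emph{both} conclusions in \eqref{InterationK} simultaneously rather than deriving the decay first and the average bound afterward. Everything else is a routine telescoping estimate.
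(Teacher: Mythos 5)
Your overall strategy is exactly the paper's: an induction that carries both conclusions of \eqref{InterationK} simultaneously, Lemma \ref{BlowUpLemma} for the decay of $E$ at each scale, and a telescoping sum via \eqref{IterationV}--\eqref{IterationDV} controlled by a geometric series and the choice of $\epsilon_1$. The structural points you emphasize (why both statements must be carried in the induction, and that the decay hypothesis of Lemma \ref{BlowUpLemma} is available at radius $\vartheta^k r$ because $2^{-k}\epsilon_1^2\le\epsilon^2$) are correct and match the paper.

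The flaw is in the very bookkeeping you flagged as the delicate step: $\frac{4}{6}L=\frac{2}{3}L$ is \emph{not} less than $\frac{1}{2}L$, so as written your increment bound plus $|(D\bv)_{Q_r}|<\frac12 L$ only yields $|(D\bv)_{Q_{\vartheta^{k+1}r}}|<\frac{7}{6}L$, which does not close the induction. For the $\bv$-averages this is repairable: in \eqref{IterationV} the $j$-th increment carries the factor $\vartheta^j r\le 2^{-j}$ (not just $r$, as you wrote), so the series has ratio $\vartheta/\sqrt2\le\frac{1}{2\sqrt2}$ and sums to less than $3$, giving an increment at most $\frac{3\epsilon_1}{\vartheta^{n/2+1}}\le\frac12 L$ as needed. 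For the $D\bv$-averages there is no such extra factor, and the honest sum $\sum_{j\ge0}2^{-j/2}=2+\sqrt2>3$ gives an increment of about $0.57L$ when $\epsilon_1=\frac{\vartheta^{n/2+1}}{6}L$, so with the stated constant $6$ the crude telescoping bound does not quite give $<L$. (The paper's own display sidesteps this by starting its sum at $k=1$, i.e.\ it silently drops the first increment; the clean fix is to take $\epsilon_1\le\frac{\vartheta^{n/2+1}}{2(2+\sqrt2)}L$, e.g.\ with $7$ in place of $6$, which is harmless for all later uses of the corollary.) So: right approach, but the constant chase for the $D\bv$ term needs to be done honestly rather than with the incorrect inequality $\frac{2}{3}<\frac12$.
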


\begin{proof}
We will use mathematical induction on $k$.  Let us first consider the base case $k=1$. By Lemma  \ref{BlowUpLemma} and \eqref{IterAssump}, we have 
$E(x,t,\vartheta r)\le \frac{1}{2}E(x,t,r)$.  We also have from \eqref{IterationV} that
\begin{align*}
|(\bv)_{Q_{\vartheta r}}|&\le |(\bv)_{Q_{\vartheta r}}- (\bv)_{Q_r}|+|(\bv)_{Q_r}|\\
&\le \frac{r}{\vartheta^{n/2+1}}E(x,t,r)^{1/2}+|(\bv)_{Q_r}|\\
&\le \frac{\epsilon_1}{\vartheta^{n/2+1}}+\frac{1}{2}L\\
&<L.
\end{align*}
Similarly, we can employ \eqref{IterationDV} to conclude $|(D\bv)_{Q_{\vartheta r} }|< L$.  Therefore, we have established \eqref{InterationK} for $k=1$.

\par Now suppose that \eqref{InterationK} holds for  $k=1,2,\dots, j\ge 1$.  In view of \eqref{IterationV}, we can employ the triangle inequality to deduce 
\begin{align*}
|(\bv)_{Q_{\vartheta^{j+1}r} }| & \le \sum^{j-1}_{k=0}|(\bv)_{Q_{\vartheta^{k+1}r} }-(\bv)_{Q_{\vartheta^{k}r} }|+|(\bv)_{Q_{ r} }|
\\
& \le \sum^{j-1}_{k=1}\frac{r}{\vartheta^{n/2+1}}E(x,t,\vartheta^k r)^{1/2}+|(\bv)_{Q_{ r} }|\\
& < \sum^{j-1}_{k=1}\frac{1}{\vartheta^{n/2+1}}\frac{1}{\sqrt{2}^k}E(x,t,r)^{1/2}+\frac{1}{2}L\\
&\le \frac{\epsilon_1}{\vartheta^{n/2+1}}\sum^{j-1}_{k=1}\frac{1}{\sqrt{2}^k}+\frac{1}{2}L\\
&\le \frac{\epsilon_1}{\vartheta^{n/2+1}}\sum^{\infty}_{k=1}\left(\frac{3}{4}\right)^k+\frac{1}{2}L\\
&\le \frac{3\epsilon_1}{\vartheta^{n/2+1}}+\frac{1}{2}L\\
&\le L.
\end{align*}
In nearly the same fashion, we can use \eqref{IterationDV} to deduce $|(D\bv)_{Q_{\vartheta^{j+1}r} }|< L$. As 
$$
E(x,t,\vartheta^{j}r)\le \frac{1}{2^{j}}E(x,t,r)\le \frac{1}{2^j}\epsilon_1^2<\epsilon^2,
$$
Lemma \ref{BlowUpLemma} then gives 
$$
E(x,t,\vartheta^{j+1}r)=E(x,t,\vartheta(\vartheta^{j}r))\le \frac{1}{2}E(x,t,\vartheta^{j}r)\le\frac{1}{2^{j+1}}E(x,t,r).
$$
\end{proof}
\begin{cor}\label{DecayCor}
Assume $\bv$ is a weak solution of \eqref{mainPDE} in $U\times (0,T)$. Let $L>0$ and suppose there are $(x,t)\in U\times(0,T)$ and $r>0$ as in \eqref{IterAssump}.  Then there exist $C\ge 0$, $\rho_1\in (0,\rho)$, $\alpha\in (0,1)$ and a neighborhood $O\subset U\times(0,T)$ of $(x,t)$ such that 
$$
E(y,s,R)\le CR^\alpha, \quad R\in (0,\rho_1),\quad (y,s)\in O. 
$$
\end{cor}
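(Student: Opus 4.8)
The plan is to promote the single-point geometric decay of Corollary~\ref{iterCorollary} to a uniform power decay on a whole neighborhood, using that hypothesis \eqref{IterAssump} is an \emph{open} condition in the center together with an elementary interpolation across scales. \emph{Step 1: opening up the hypothesis.} For a fixed radius, the maps $(y,s)\mapsto(\bv)_{Q_r(y,s)}$, $(y,s)\mapsto(D\bv)_{Q_r(y,s)}$ and $(y,s)\mapsto E(y,s,r)$ are continuous on the set of centers whose closed cylinder $\overline{Q_r(y,s)}$ lies in $U\times(0,T)$, by continuity of translation in $L^1_{\mathrm{loc}}$ and $L^2_{\mathrm{loc}}$ together with $\bv,D\bv\in L^2_{\mathrm{loc}}$ from \eqref{naturalbounds}. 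Since the inequalities in \eqref{IterAssump} are strict and also depend continuously on the radius, I would first replace $r$ by a slightly smaller $r'<r$ for which \eqref{IterAssump} still holds at $(x,t)$ but now with $\overline{Q_{r'}(x,t)}\subset\subset U\times(0,T)$, and relabel $r'$ as $r$; continuity in the center then yields a neighborhood $O$ of $(x,t)$ on which \eqref{IterAssump} holds with this same $r$ at every point. Corollary~\ref{iterCorollary} now applies at each $(y,s)\in O$, so by \eqref{InterationK}
\[
E(y,s,\vartheta^k r)\le 2^{-k}E(y,s,r)\le 2^{-k}\epsilon_1^2,\qquad k\in\N,\ (y,s)\in O.
\]
Put $\alpha:=\log 2/\log(1/\vartheta)$, which lies in $(0,1)$ precisely because $\vartheta\in(0,1/2)$; then $2^{-k}=\vartheta^{k\alpha}$, so $E(y,s,\vartheta^k r)\le\epsilon_1^2 r^{-\alpha}(\vartheta^k r)^\alpha$ for every $k\ge 0$.

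\emph{Step 2: interpolation across scales.} I would prove that there is $C_2=C_2(n,\vartheta)$ such that $E(y,s,R)\le C_2\,E(y,s,\rho)$ whenever $\vartheta\rho<R\le\rho$ and $\overline{Q_\rho(y,s)}\subset U\times(0,T)$. For the second averaged term in $E$ (the one built from $D\bv-(D\bv)_{Q_R(y,s)}$) this is immediate: the average minimizes the $L^2$ distance to constants, so replacing $(D\bv)_{Q_R(y,s)}$ by $(D\bv)_{Q_\rho(y,s)}$ only enlarges the integral, and passing from the domain $Q_R(y,s)$ to $Q_\rho(y,s)$ costs the factor $|Q_\rho|/|Q_R|\le\vartheta^{-(n+2)}$. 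For the affine term one compares the affine function $z\mapsto(\bv)_{Q_R(y,s)}+(D\bv)_{Q_R(y,s)}(z-y)$ with $z\mapsto(\bv)_{Q_\rho(y,s)}+(D\bv)_{Q_\rho(y,s)}(z-y)$; since $z-y$ has zero spatial mean on $B_R(y)$, the differences $|(\bv)_{Q_R(y,s)}-(\bv)_{Q_\rho(y,s)}|$ and $|(D\bv)_{Q_R(y,s)}-(D\bv)_{Q_\rho(y,s)}|$ are controlled by the $L^2(Q_R(y,s))$ averages of $\bv-(\bv)_{Q_\rho(y,s)}-(D\bv)_{Q_\rho(y,s)}(\cdot-y)$ and of $D\bv-(D\bv)_{Q_\rho(y,s)}$ respectively; enlarging the domain to $Q_\rho(y,s)$ again costs $\vartheta^{-(n+2)}$, while the normalizing factor obeys $R^{-2}\le\vartheta^{-2}\rho^{-2}$. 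Assembling these bounds yields the comparison, e.g.\ with $C_2=7\,\vartheta^{-(n+4)}$.

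\emph{Step 3: conclusion.} Set $\rho_1:=r\in(0,\rho)$. Given $R\in(0,\rho_1)$ and $(y,s)\in O$, pick $k\ge 0$ with $\vartheta^{k+1}r<R\le\vartheta^k r$ and combine Step~2 (with $\rho=\vartheta^k r$) with Step~1:
\[
E(y,s,R)\le C_2\,E(y,s,\vartheta^k r)\le C_2\,\epsilon_1^2\,r^{-\alpha}(\vartheta^k r)^\alpha\le C_2\,\epsilon_1^2\,(\vartheta r)^{-\alpha}R^\alpha,
\]
using $\vartheta^k r<\vartheta^{-1}R$ in the last inequality. This is the asserted statement, with $C:=C_2\,\epsilon_1^2\,(\vartheta r)^{-\alpha}$, the exponent $\alpha$ and the neighborhood $O$ from Step~1, and radius $\rho_1=r$. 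The only point that needs care is the bookkeeping in Step~2, namely keeping the affine correction $(D\bv)_{Q_R(y,s)}(z-y)$ honest while changing scales, and, as a technical preliminary in Step~1, ensuring that every parabolic cylinder that appears genuinely sits inside $U\times(0,T)$; neither is conceptually deep, but both merit a careful write-up.
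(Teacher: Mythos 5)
Your proposal is correct and follows essentially the same route as the paper: the comparison of $E(y,s,R)$ with $E(y,s,\vartheta^k r)$ across one intermediate scale (the paper's inequalities \eqref{ffIneq1}--\eqref{ffIneq2}, which you rederive directly), the geometric decay from Corollary \ref{iterCorollary} converted to the power rate $\alpha=\ln 2/\ln(1/\vartheta)$, and the continuity of $(\bv)_{Q_r(y,s)}$, $(D\bv)_{Q_r(y,s)}$ and $E(y,s,r)$ in the center to propagate the strict inequalities of \eqref{IterAssump} to a neighborhood $O$. The only difference is cosmetic: you open up the hypothesis to $O$ first (with a single slightly shrunk radius) whereas the paper computes at $(x,t)$ first and then invokes continuity over a small interval of radii, so no further comment is needed.
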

\begin{proof}
Let $R\in (0,r)$ and choose $k\in \N$ such that $\vartheta^{k+1}r<R\le \vartheta^k r$. For $f\in L^2_{\text{loc}}((0,T); H^1_{\text{loc}}(U))$, 
we have
\begin{equation}\label{ffIneq1}
\ffRint|f-f_{Q_R}|^2dyds\le \frac{4}{\vartheta^{n+2}} \intQvarkr|f-f_{Q_{\vartheta^{k}r}}|^2dyds
\end{equation}
and 
\begin{align}\label{ffIneq2}
\ffRint\;\;\left|\frac{f-f_{Q_R}-(Df)_{Q_R}\cdot (y-x)}{R}\right|^2dyds\le \hspace{3in} \nonumber\\
\frac{4}{\vartheta^{2(n+3)}}\left\{\intQvarkr\;\;\;\;\left|\frac{f-f_{Q_{\vartheta^kr}}-(Df)_{Q_{\vartheta^kr}}\cdot (y-x) }{\vartheta^kr}\right|^2dyds+ \intQvarkr|Df-(Df)_{Q_{\vartheta^{k}r}}|^2dyds\right\}.\hspace{.5in}
\end{align}
These inequalities can be derived similarly as we did for \eqref{IterationV} and \eqref{IterationDV}; they are also proved in Corollary 4.9 of \cite{HynTAM}. 

\par Letting $f=v^i$ in \eqref{ffIneq2} and  $f=Dv^i$ in \eqref{ffIneq1} and summing over $i=1,\dots, m$ gives
\begin{align*}
E(x,t,R)&\le \frac{8}{\vartheta^{2(n+3)}}E(x,t,\vartheta^k r).
\end{align*}
In view of Corollary \ref{iterCorollary}, 
\begin{align*}
E(x,t,R)&\le \frac{8}{\vartheta^{2(n+3)}} \frac{1}{2^k}E(x,t,r)\\
& \le \frac{8\epsilon_1^2}{\vartheta^{2(n+3)}} \frac{1}{2^k}\\
&= \frac{16\epsilon_1^2}{\vartheta^{2(n+3)}} e^{-(k+1) \log 2}\\
&\le \frac{16\epsilon_1^2}{\vartheta^{2(n+3)}} \left(\frac{R}{r}\right)^\frac{\ln(1/2)}{\ln\vartheta}.
\end{align*}
\par Recall that $(\bv)_{Q_r(y,s)}$, $(D\bv)_{Q_r(y,s)}$, and $E(y,s,r)$ are all continuous functions of $(y,s)\in U\times (0,T)$ and $r>0$. Therefore, there exists $\rho_1, \rho_2\in (0,\rho)$ and a neighborhood $O$ of $(x,t)$ such that \eqref{IterAssump} holds for all $(y,s)\in O$ and $r\in(\rho_1,\rho_2)$. As a result, we can repeat the same computation above to conclude 
$$
E(y,s,R)\le \frac{16\epsilon_1^2}{\vartheta^{2(n+3)}} \left(\frac{R}{\rho_1}\right)^\frac{\ln(1/2)}{\ln\vartheta}
$$
for $(y,s)\in O$ and $R\in (0,\rho_1)$.
\end{proof}

\subsection{Partial regularity}
We are finally in position to prove Theorem \ref{mainThm} and  \ref{secondThm}. We will start with Theorem \ref{mainThm}, which asserts the almost everywhere H\"older continuity of weak solutions.

\begin{proof}[Proof of Theorem \ref{mainThm}]  We first claim that the set of points $(x,t)$ for which the following limits hold
$$
\begin{cases}
\lim_{r\rightarrow 0^+}(\bv)_{Q_r(x,t)}=\bv(x,t)\\
\lim_{r\rightarrow 0^+}(D\bv)_{Q_r(x,t)}=D\bv(x,t)\\
\lim_{r\rightarrow 0^+}E(x,t,r)=0
\end{cases}
$$
has full Lebesgue measure in $U\times(0,T)$.  This is evident for the first two limits by a version of Lebesgue's differentiation theorem for parabolic cylinders \cite{ImbSyl}. 

\par As for the third limit, recall Poincar\'{e}'s
inequality on a cylinder $Q_r=Q_r(x,t)\subset U\times(0,T)$: there is a constant $C_0$ such that
$$
\iint_{Q_r}|\bw - (\bw)_{Q_r}|^2dyds \le C_0\left\{r^4\iint_{Q_r}|\bw_t|^2dyds + r^2\iint_{Q_r}|D\bw|^2dyds\right\}
$$
for each $\bw\in H^1_{\text{loc}}(U\times(0,T);\R^m)$. Substituting 
$$
\bw(y,s):=\bv(y,s) - (\bv)_{Q_r}-(D\bv)_{Q_r}(y-x)
$$
in the Poincar\'e inequality above gives
\begin{equation}\label{SimpleUpperBoundE}
E(x,t,r)\le (C_0+1)\left\{ 
r^2\ffint\;\; \left|\bv_t\right|^2dyds + \ffint\;\; \left|D\bv-(D\bv)_{Q_r}\right|^2dyds\right\}.
\end{equation}
Again we invoke Lebesgue differentiation \cite{ImbSyl} to conclude $\lim_{r\rightarrow 0^+}E(x,t,r)=0$ on a set of full Lebesgue measure.

\par It now follows that for Lebesgue almost every $(x,t)\in U\times(0,T)$ there are $L,r>0$ such that \eqref{IterAssump} holds.  At any such $(x,t)\in U\times(0,T)$, we can then apply the conclusion of Corollary \ref{DecayCor}.  In particular, there is a neighborhood $O\subset U\times(0,T)$ and $\rho_1>0$ such that $E(y,s,R)\le CR^\alpha$ for $Q_R=Q_R(y,s)\subset U\times(0,T)$ with $(y,s)\in O$ and $R<\rho_1$.   Therefore,
$$
\left(\ffRint|D\bv-(D\bv)_{Q_R}|^2dyds\right)^{1/2}\le E(y,s,R)^{1/2}\le \sqrt{C} R^{\frac{1}{2}\alpha}.
$$
By Campanato's criterion \cite{Camp, DaPrato}, $D\bv$ is H\"older continuous in a neighborhood of $(x,t)$. Consequently, the set ${\cal O}$ of points $(x,t)$ for which there is a neighborhood of $(x,t)$ such that $D\bv$ is H\"{o}lder continuous has full Lebesgue measure. By definition, ${\cal O}$ is open.  This concludes a proof of Theorem \ref{mainThm}.
\end{proof}
\begin{rem}
The above argument also shows that $\bv$ is H\"older continuous in a neighborhood of $(x,t)$. Indeed
\begin{align*}
\left(\ffRint\;\; \left|\bv-(\bv)_{Q_R}\right|^2dzd\tau\right)^{1/2}&\le \left(\ffRint\;\; \left|\bv-(\bv)_{Q_R}-(D\bv)_{Q_R}(z-y)\right|^2dzd\tau\right)^{1/2}+|(D\bv)_{Q_R}|R\\
&= RE(y,s,R)^{1/2}+LR\\
&\le \sqrt{C} R^{1+\frac{1}{2}\alpha}+LR\\
&\le (\sqrt{C} \rho_1^{\frac{1}{2}\alpha}+L)R.
\end{align*}
\end{rem}
Let us recall the definition of parabolic Hausdorff measure.  

\begin{defn}\label{ParaHausMeas}
For $G\subset \R^n\times\R$, $s\in [0,n+2]$, $\delta>0$, set 
$$
{\cal P}^s_\delta(G):=\inf\left\{\sum_{i\in \N}r^s_i: G\subset\bigcup_{i\in \N} Q_{r_i}(x_i,t_i),\; r_i\le \delta\right\}.
$$
The {\it $s$-dimensional parabolic Hausdorff measure} of $G$ is defined 
\begin{equation}
{\cal P}^s(G):=\sup_{\delta>0}{\cal P}^s_\delta(G).
\end{equation}
Moreover, the {\it parabolic Hausdorff dimension} of $G$ is the number
$$
\text{dim}_{\cal P}(G):=\inf\{s\ge 0:{\cal P}^s(G)=0 \}.
$$
\end{defn}
We note that ${\cal P}^s$ is an outer measure on $\R^n\times\R$ for each $s\in [0,n+2]$. While there are many important properties of (general) Hausdorff measure (as detailed in \cite{Evans} and \cite{Rogers}),  we will only make use of one fact about parabolic Hausdorff measure that is based on a Poincar\'e inequality for fractionally differentiable functions. The Poincar\'e inequality we have in mind is as follows, we will omit a proof of this inequality as it is stated and proved in Lemma 2.16 of \cite{DuzMinSte}. 
\begin{lem}
Let $\gamma\in (0,1)$.  Suppse $w\in L^2_{\text{loc}}(U\times(0,T))$ 
satisfies 
\begin{equation}\label{FracSpaceW}
\int^{t_1}_{t_0}\int_{V}\int_V\frac{|w(x,t)-w(y,t)|^2}{|x-y|^{n+2\gamma}}dxdydt+\int^{t_1}_{t_0}\int^{t_1}_{t_0}\int_V\frac{|w(x,t)-w(x,s)|^2}{|t-s|^{1+\gamma}}dxdtds<\infty
\end{equation}
for each open $V\subset\subset U$ and interval $[t_0,t_1]\subset (0,T)$.  Then there is a constant $C_0$ such that for every $Q_r=Q_r(x,t)\subset U\times(0,T)$, 
\begin{align}
&\iint_{Q_r}|w-(w)_{Q_r}|^2dyds\le C_0r^{2\gamma}\left\{\int^{t+r^2/2}_{t-r^2/2}\int_{B_r(x)}\int_{B_r(x)}\frac{|w(x,s)-w(y,s)|^2}{|x-y|^{n+2\gamma}}dxdyds\right.\\
&\hspace{2in} \left.+\int^{t+r^2/2}_{t-r^2/2}\int^{t+r^2/2}_{t-r^2/2}\int_{B_r(x)}\frac{|w(y,\tau)-w(y,s)|^2}{|\tau-s|^{1+\gamma}}dyd\tau ds\right\}.
\end{align}
\end{lem}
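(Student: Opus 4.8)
The plan is to deduce this inequality from the elementary variance identity on a parabolic cylinder together with a ``move in space, then move in time'' splitting of the difference $w(y,s)-w(z,\tau)$, after which the Euclidean diameter of $B_r(x)$ and the length of the time interval are absorbed into the two fractional kernels. There is nothing to prove unless both double integrals on the right-hand side are finite, so we assume this; we may also assume, after a routine exhaustion by slightly smaller cylinders if necessary, that $w\in L^2(Q_r)$, so that $(w)_{Q_r}$ and the left-hand side make sense.

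First I would use the variance identity: for any $g\in L^2(Q_r)$,
$$
\iint_{Q_r}\bigl|g-(g)_{Q_r}\bigr|^2\,dy\,ds=\frac{1}{2|Q_r|}\iint_{Q_r}\iint_{Q_r}\bigl|g(y,s)-g(z,\tau)\bigr|^2\,dy\,ds\,dz\,d\tau,
$$
which follows by expanding the square. Taking $g=w$ reduces the estimate to bounding the double integral of $|w(y,s)-w(z,\tau)|^2$ over $Q_r\times Q_r$. For this I would split along the path through $w(z,s)$:
$$
|w(y,s)-w(z,\tau)|^2\le 2\,|w(y,s)-w(z,s)|^2+2\,|w(z,s)-w(z,\tau)|^2.
$$
The first term is independent of $\tau$, the second of $y$, so integrating out the redundant variable (a factor $|I_r|=r^2$ in the first case, $|B_r(x)|=|B_1|\,r^n$ in the second, matched against $1/|Q_r|=1/(|B_1|\,r^{n+2})$ from the variance identity) leaves two pieces of the form
$$
\frac{C}{|B_1|\,r^{n}}\int_{I_r}\iint_{B_r(x)\times B_r(x)}|w(y,s)-w(z,s)|^2\,dy\,dz\,ds\;+\;\frac{C}{r^2}\int_{I_r}\int_{I_r}\int_{B_r(x)}|w(z,s)-w(z,\tau)|^2\,dz\,ds\,d\tau,
$$
where $I_r:=(t-r^2/2,\,t+r^2/2)$.

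Then I would insert the fractional denominators. For $y,z\in B_r(x)$ we have $|y-z|\le 2r$, so multiplying and dividing by $|y-z|^{n+2\gamma}$ and using $|y-z|^{n+2\gamma}\le(2r)^{n+2\gamma}$ turns the first piece into $c_n\,r^{2\gamma}$ times the spatial Gagliardo integral on the right-hand side of the claim; likewise, for $s,\tau\in I_r$ we have $|s-\tau|\le r^2$, so inserting $|s-\tau|^{1+\gamma}$ and using $|s-\tau|^{1+\gamma}\le r^{2(1+\gamma)}$ turns the second piece into $r^{2\gamma}$ times the temporal Gagliardo integral. Adding the two contributions yields the stated bound with $C_0=C_0(n,\gamma)$.

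Everything here is elementary; the one place that requires care is the bookkeeping of the powers of $r$, namely checking that the factor $1/|Q_r|$ from the variance identity, the volume factors $r^2$ and $|B_1|\,r^n$ gained when integrating out the redundant variable, and the diameters $2r$ and $r^2$ introduced with the fractional kernels all combine to give exactly $r^{2\gamma}$ in \emph{both} resulting terms. The preliminary reductions and the uses of Tonelli are harmless because every integrand involved is nonnegative.
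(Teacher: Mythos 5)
Your argument is correct, and the power counting works out exactly as you claim: the variance identity contributes $|Q_r|^{-1}=(|B_1|r^{n+2})^{-1}$, integrating out the redundant variable gives $r^2$ (spatial piece) and $|B_1|r^n$ (temporal piece), and inserting the kernels costs at most $(2r)^{n+2\gamma}$ and $r^{2(1+\gamma)}$ respectively, so each piece is bounded by $c(n,\gamma)\,r^{2\gamma}$ times the corresponding Gagliardo integral over $B_r(x)\times I_r$, with a constant independent of the cylinder. Your preliminary reduction (nothing to prove if the right-hand side is infinite; exhaust by slightly smaller cylinders to justify $w\in L^2(Q_r)$ and the existence of $(w)_{Q_r}$) is also fine, since the averages over the smaller cylinders stay bounded and monotone convergence applies. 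Note, however, that the paper itself does not prove this lemma at all: it is quoted from Lemma 2.16 of Duzaar--Mingione--Steffen and the proof is explicitly omitted. So your contribution is a short, self-contained derivation — the classical ``variance identity plus split the increment through an intermediate point, then absorb diameters into the fractional kernels'' scheme — which is essentially the standard proof of such fractional Poincar\'e inequalities; it buys the reader independence from the external reference at the cost of a few lines, and it also makes transparent that only finiteness of the two seminorms on the given cylinder (not the full hypothesis \eqref{FracSpaceW}) is needed.
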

The crucial fact about parabolic Hausdorff measure is as follows.  Versions of this assertion which can be found in Proposition 3.3 of \cite{DuzMin}, Proposition 4.2 in \cite{Min}, Theorem 3 in section 2.4.3 of \cite{Evans}, so we will not provide a proof.

\begin{prop}\label{DimEstProp}
Let $\gamma\in (0,1)$, and suppose $w\in L^2_{\text{loc}}(U\times(0,T))$ satisfies \eqref{FracSpaceW}.  Then 
$$
\text{\normalfont{dim}}_{\cal P}\left(\left\{(x,t)\in U\times (0,T):  \limsup_{r\rightarrow 0^+}\ffintXT|w-(w)_{Q_r(x,t)}|^2dyds>0\right\}\right)\le n+2-2\gamma
$$
and
$$
\text{\normalfont{dim}}_{\cal P}\left(\left\{(x,t)\in U\times (0,T):  \limsup_{r\rightarrow 0^+}|(w)_{Q_r(x,t)}|=+\infty\right\}\right) \le n+2-2\gamma.
$$
\end{prop}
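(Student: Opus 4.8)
The plan is to deduce both assertions from one density estimate for a finite measure that controls the right-hand side of the fractional Poincar\'e inequality of the preceding Lemma. Since both statements are local, it suffices to bound the parabolic dimension of each of the two sets intersected with an arbitrary box $V'\times(t_0',t_1')\subset\subset U\times(0,T)$; fix such a box together with $V$ and $[t_0,t_1]$ with $V'\times(t_0',t_1')\subset\subset V\times(t_0,t_1)\subset\subset U\times(0,T)$, and consider only cylinders $Q_r=Q_r(x,t)\subset V\times(t_0,t_1)$. Put
\[
g(y,s):=\int_{V}\frac{|w(y,s)-w(z,s)|^2}{|y-z|^{n+2\gamma}}\,dz+\int_{t_0}^{t_1}\frac{|w(y,\tau)-w(y,s)|^2}{|\tau-s|^{1+\gamma}}\,d\tau,
\]
which lies in $L^1(V\times(t_0,t_1))$ by \eqref{FracSpaceW}, and let $\nu$ be the finite Borel measure with $d\nu=g\,dy\,ds$. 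Because the integrands in the Poincar\'e right-hand side are nonnegative, enlarging the inner domains of integration (from $B_r(x)$ to $V$ in the spatial term and from $(t-r^2/2,t+r^2/2)$ to $(t_0,t_1)$ in the temporal term) gives, for every such $Q_r$,
\[
\iint_{Q_r}|w-(w)_{Q_r}|^2\,dy\,ds\le C_0\,r^{2\gamma}\,\nu(Q_r),\qquad\text{hence}\qquad \frac{1}{|Q_r|}\iint_{Q_r}|w-(w)_{Q_r}|^2\,dy\,ds\le\frac{C}{r^{\,n+2-2\gamma}}\,\nu(Q_r).
\]

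The basic measure--density fact to be used is: if $h\in L^1(V\times(t_0,t_1))$ is nonnegative and $0\le\sigma<n+2$, then ${\cal P}^{\sigma}(A_h^{\sigma})=0$, where $A_h^\sigma:=\{(x,t):\limsup_{r\to0^+}r^{-\sigma}\iint_{Q_r(x,t)}h\,dy\,ds>0\}$. Indeed $A_h^\sigma$ has Lebesgue measure zero, since at any Lebesgue point of $h$ (parabolic Lebesgue differentiation theorem \cite{ImbSyl}) one has $r^{-\sigma}\iint_{Q_r}h=c_n\,r^{\,n+2-\sigma}\bigl(|Q_r|^{-1}\iint_{Q_r}h\bigr)\to0$. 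Hence, given $k\in\N$ and $\delta,\eta>0$, one encloses $A^\sigma_{1/k}:=\{\limsup_{r\to0^+}r^{-\sigma}\iint_{Q_r}h>1/k\}$ in an open $G\subset\subset V\times(t_0,t_1)$ with $\iint_G h<\eta$, covers $A^\sigma_{1/k}$ finely by cylinders $Q_r\subset G$ with $r<\delta$ and $r^{\sigma}<k\iint_{Q_r}h$, extracts a countable disjoint subfamily $\{Q_{r_i}(x_i,t_i)\}$ whose fivefold dilates still cover $A^\sigma_{1/k}$ (a Vitali-type argument, valid because parabolic cylinders have bounded eccentricity in the parabolic quasimetric), and obtains ${\cal P}^\sigma_{5\delta}(A^\sigma_{1/k})\le 5^\sigma\sum_i r_i^\sigma\le 5^\sigma k\iint_G h<5^\sigma k\,\eta$. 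Letting $\eta\to0$, then $\delta\to0$, then taking the union over $k$ gives ${\cal P}^\sigma(A_h^\sigma)=0$. Applying this with $h=g$ and $\sigma=n+2-2\gamma<n+2$, the displayed oscillation estimate shows that the first set of the Proposition is contained in $A_g^{\,n+2-2\gamma}$, hence it is ${\cal P}^{\,n+2-2\gamma}$-null and in particular has parabolic dimension $\le n+2-2\gamma$.

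For the second set I use a dyadic telescoping argument. Choose $r_0>0$ so small that $Q_{r_0}(x,t)\subset V\times(t_0,t_1)$ for all $(x,t)\in V'\times(t_0',t_1')$, and put $r_j:=2^{-j}r_0$. H\"older's inequality together with the displayed bound gives
\[
\bigl|(w)_{Q_{r_{j+1}}}-(w)_{Q_{r_j}}\bigr|\le 2^{(n+2)/2}\Bigl(\frac{1}{|Q_{r_j}|}\iint_{Q_{r_j}}|w-(w)_{Q_{r_j}}|^2\Bigr)^{1/2}\le C_1\bigl(r_j^{-(n+2-2\gamma)}\,\nu(Q_{r_j})\bigr)^{1/2},
\]
and the same estimate compares $(w)_{Q_r}$ for a general $r\in(0,r_0)$ with the nearest dyadic value. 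Therefore, if $\sum_{j\ge0}\bigl(r_j^{-(n+2-2\gamma)}\nu(Q_{r_j})\bigr)^{1/2}<\infty$ then $\bigl((w)_{Q_{r_j}}\bigr)_j$ is Cauchy and $\sup_{0<r<r_0}|(w)_{Q_r(x,t)}|<\infty$; hence the second set of the Proposition is contained in $\bigl\{(x,t):\sum_{j\ge0}\bigl(r_j^{-(n+2-2\gamma)}\nu(Q_{r_j}(x,t))\bigr)^{1/2}=\infty\bigr\}$. Now fix $\epsilon\in(0,2\gamma)$ and set $\sigma:=n+2-2\gamma+\epsilon\in(n+2-2\gamma,\,n+2)$. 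Cauchy--Schwarz gives $\sum_j\bigl(r_j^{-(n+2-2\gamma)}\nu(Q_{r_j})\bigr)^{1/2}\le\bigl(\sum_j r_j^{\epsilon}\bigr)^{1/2}\bigl(\sum_j r_j^{-\sigma}\nu(Q_{r_j})\bigr)^{1/2}$, the first factor being finite, so the set in question lies in $\bigl\{(x,t):\sum_j r_j^{-\sigma}\nu(Q_{r_j}(x,t))=\infty\bigr\}$. Finally, if $\sigma'\in(\sigma,n+2)$ and $\limsup_{j\to\infty}r_j^{-\sigma'}\nu(Q_{r_j}(x,t))=0$, then $r_j^{-\sigma}\nu(Q_{r_j})=o\bigl(r_j^{\,\sigma'-\sigma}\bigr)$ is a summable sequence; taking the contrapositive and passing from dyadic radii to all radii, $\{(x,t):\sum_j r_j^{-\sigma}\nu(Q_{r_j})=\infty\}\subset A_g^{\sigma'}$, which is ${\cal P}^{\sigma'}$-null by the fact proved above. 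Since $\epsilon$ (hence $\sigma'$) may be taken arbitrarily close to $n+2-2\gamma$ from above, the second set has ${\cal P}^{\sigma'}$-measure zero for every $\sigma'\in(n+2-2\gamma,\,n+2)$, so its parabolic dimension is $\le n+2-2\gamma$.

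The routine parts are checking that $\nu$ is finite and dominates the Poincar\'e right-hand side and that the Vitali covering applies to parabolic cylinders (both standard, the latter as in \cite{Evans}). The one genuinely delicate step is the second set: one must sacrifice a small power of the radius to pass from divergence of the square-root sum $\sum_j\bigl(r_j^{-(n+2-2\gamma)}\nu(Q_{r_j})\bigr)^{1/2}$ to a density condition at the slightly larger exponent $\sigma'$, and only afterward let $\sigma'$ decrease back to $n+2-2\gamma$.
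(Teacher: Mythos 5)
Your proof is correct: the paper itself omits a proof of Proposition \ref{DimEstProp}, deferring to \cite{DuzMin}, \cite{Min} and \cite{Evans}, and your argument is exactly the standard one given there --- a Vitali-type density estimate showing ${\cal P}^\sigma$-nullity of $\{\limsup_{r\to 0^+}r^{-\sigma}\nu(Q_r)>0\}$ for the finite measure $\nu$ dominating the right side of the fractional Poincar\'e inequality, combined with dyadic telescoping of the averages for the second set. (The only cosmetic remark is that the Cauchy--Schwarz step is dispensable: divergence of $\sum_j\bigl(r_j^{-(n+2-2\gamma)}\nu(Q_{r_j})\bigr)^{1/2}$ already forces $\limsup_j r_j^{-(n+2-2\gamma+\epsilon)}\nu(Q_{r_j})>0$ directly, since otherwise the terms are $o(r_j^{\epsilon/2})$ and hence summable.)
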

We will combine these facts with our previous estimates to fashion a fairly simple proof of Theorem \ref{secondThm}. Before proceeding to the proof, we will need to verify a local version of inequality \eqref{EnergyBound2}.
\begin{lem}
Assume $\bv$ is a weak solution of \eqref{mainPDE} on $U\times(0,T)$. There is a constant $C$ depending only on $\theta,\lambda, \Theta$, and $\Lambda$ such that 
\begin{equation}\label{MyCacci}
\iint_{Q_r(x,t)}|\bv_t|^2dyds\le \frac{C}{r^2}\iint_{Q_{2r}(x,t)}|D\bv-(D\bv)_{Q_{2r}(x,t)}|^2dyds
\end{equation}
whenever $Q_{2r}(x,t)\subset U\times(0,T)$. 
\end{lem}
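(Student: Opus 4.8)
The target inequality \eqref{MyCacci} is a Caccioppoli-type estimate, so I would prove it by plugging a suitable cutoff into the second integral identity \eqref{MainIdentity2}, exactly as in the proof of Corollary~\ref{EnergyBound2cor}, but now tracking the constant/subtracted affine function carefully so that the right-hand side involves $D\bv - (D\bv)_{Q_{2r}}$ rather than $|D\bv|^2$. The key point is that $F$ may be replaced by $\tilde F(M) := F(M) - F(M_0) - DF(M_0)\cdot(M-M_0)$, where $M_0 := (D\bv)_{Q_{2r}(x,t)}$, without changing the equation: since $\bv$ still solves $\partial_t(D\psi(\bv)) = \Div DF(D\bv) = \Div D\tilde F(D\bv)$ (the added term is constant in $x$), and $\tilde F$ still satisfies \eqref{UnifConv2} with the same constants. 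With $\tilde F(M_0) = D\tilde F(M_0) = 0$, the analogue of \eqref{MoreEstPsi} gives $\tfrac{1}{2}\lambda |M - M_0|^2 \le \tilde F(M) \le \tfrac12\Lambda|M-M_0|^2$ and $|D\tilde F(M)| \le \Lambda |M - M_0|$.

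\emph{Steps, in order.} First I would fix $Q_{2r} = Q_{2r}(x,t) \subset U\times(0,T)$, set $M_0 = (D\bv)_{Q_{2r}}$, introduce $\tilde F$ as above, and observe $\bv$ is a weak solution of the same system with $F$ replaced by $\tilde F$. Second, I would choose $\eta \in C^\infty_c(B_{2r}(x))$ with $0 \le \eta \le 1$, $\eta \equiv 1$ on $B_r(x)$, $|D\eta| \le C/r$, together with a cutoff in time $\chi \in C^\infty_c(t - 2r^2, t + 2r^2)$ — actually it is cleaner to take a single $\phi = \phi(y,s) \in C^\infty_c(Q_{2r})$ with $\phi \equiv 1$ on $Q_r$, $0 \le \phi \le 1$, $|D\phi| \le C/r$, $|\phi_t| \le C/r^2$, and use $\phi = \eta^2$ in the form it is applied — i.e. run the argument of Corollary~\ref{EnergyBound2cor} verbatim with $(F,\eta)$ replaced by $(\tilde F, \text{cutoff for } Q_{2r} \text{ adapted to } Q_r)$. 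Third, the computation in Corollary~\ref{EnergyBound2cor} then yields
$$
\theta \iint_{Q_r}|\bv_t|^2\,dyds \;\le\; \theta \iint_{Q_{2r}}\eta^2|\bv_t|^2\,dyds \;\le\; C \iint_{Q_{2r}} \big(\eta|\eta_t| + |D\eta|^2\big)|D\bv - M_0|^2\,dyds,
$$
and since $\eta|\eta_t| + |D\eta|^2 \le C/r^2$ on $Q_{2r}$, this is $\le (C/r^2)\iint_{Q_{2r}}|D\bv - (D\bv)_{Q_{2r}}|^2\,dyds$, which is \eqref{MyCacci}. Finally I would note that all of this is legitimate for weak solutions because Corollary~\ref{EnergyBound1Cor} (via Proposition~\ref{IdentityLemma}) guarantees \eqref{MainIdentity2}, hence the estimate \eqref{EnergyBound2} and its localized derivation, hold for weak solutions.

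\emph{Main obstacle.} There is no serious analytic difficulty here; the one point requiring care is bookkeeping the boundary/initial term $\int_U \eta^2 \tilde F(D\bv(\cdot,t))\,dy$ that appears after integrating \eqref{MainIdentity2} in time. In the global estimate this term is simply dropped (it is $\ge 0$, or it is bounded by the initial data); here, with $\phi$ compactly supported in the time variable on $(t - 2r^2, t + 2r^2)$, the honest move is to integrate the identity from $s = t - 2r^2$ (where $\phi \equiv 0$) up to a general time, so that the boundary term at the lower endpoint vanishes and the one at the upper endpoint is again nonnegative and can be discarded. Thus one should choose the time cutoff so that $\phi(\cdot, t - 2r^2) \equiv 0$, which is compatible with $\phi \equiv 1$ on $Q_r = B_r \times (t - r^2/2, t + r^2/2)$; then only the terms with $\phi_t$, $D\phi$ survive on the right, bounded by $C/r^2$ times $\iint_{Q_{2r}}|D\bv - M_0|^2$, and the proof closes. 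I would leave the (routine) verification that such a $\phi$ exists and the explicit repetition of the Cauchy–Schwarz/Young steps from Corollary~\ref{EnergyBound2cor} to the reader.
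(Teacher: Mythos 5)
Your proposal is correct and follows essentially the same route as the paper: the paper also rewrites the identity \eqref{MainIdentity2} with $F(D\bv)$ replaced by its Taylor remainder about $A=(D\bv)_{Q_{2r}(x,t)}$ (so that $DF(D\bv)$ becomes $DF(D\bv)-DF(A)$), plugs in $\phi=\eta^2$ with a product cutoff $\eta_0(y)\eta_1(\tau)$ equal to $1$ on $Q_r$, supported in $Q_{2r}$, with $|D\eta_0|\le 2/r$, $|\partial_\tau\eta_1|\le 2/r^2$, and repeats the Corollary \ref{EnergyBound2cor} computation. Your explicit subtraction of the constant $F(M_0)$ (working with the full remainder $\tilde F\ge 0$) and your handling of the temporal boundary term are exactly the bookkeeping the paper's derivation implicitly performs.
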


\begin{proof}
Fix $A\in \Mmn$ and $\phi\in C_c^\infty(U\times(0,T))$.  As we computed \eqref{MainIdentity2}, we find
\begin{align*}
&\frac{d}{dt}\int_U\phi \left(F(D\bv)-DF(A)\cdot(DV-A)\right)dx+\int_U\phi \partial_t(D\psi(\bv))\cdot \bv_t dx \\
&\hspace{.5in}=\int_U\left(\phi_t \left(F(D\bv)-DF(A)\cdot(DV-A)\right) - \bv_t \cdot (DF(D\bv)-DF(A))D\phi \right)dx
\end{align*} 
for almost every $t\in (0,T)$. And setting $\phi=\eta^2$ for $\eta\ge 0$ gives
\begin{equation}\label{MyCacciOnTheWay}
\max_{0\le t\le T}\int_U\eta^2|D\bv-A|^2 dx + \int^T_0\int_U \eta^2|\bv_t|^2dxdt 
\le C\int^T_0\int_U\left(\eta|\eta_t|+|D\eta|^2\right) |D\bv-A|^2dxdt
\end{equation}
the same way that we derived \eqref{EnergyBound2}.

\par Now let $\eta_0\in C^\infty_c(\R^n)$ satisfy
$$
\begin{cases}
0\le \eta_0\le 1\\
\eta_0\equiv 1\;\;\text{in}\;\; B_r(x)\\
\eta_0\equiv 0\;\;\text{in}\;\; \R^n\setminus B_{2r}(x)\\
|D\eta_0|\le 2/r
\end{cases}
$$
and $\eta_1\in C^\infty_c(\R)$ satisfy
$$
\begin{cases}
0\le \eta_1\le 1\\
\eta_1\equiv 1\;\;\text{in}\;\; (t-r^2/2,t+r^2/2)\\
\eta_1\equiv 0\;\;\text{in}\;\; \R\setminus (t-2r^2,t+2r^2)\\
|\partial_\tau\eta_1|\le 2/r^2.
\end{cases}
$$
We conclude \eqref{MyCacci} by choosing  $\eta(y,\tau) = \eta_0(y)\cdot \eta_1(\tau)$ and $A=(D\bv)_{Q_{2r}(x,t)}$ in \eqref{MyCacciOnTheWay}.
\end{proof}

\begin{proof}[Proof of Theorem \ref{secondThm}]  
Choose $\beta$ as in \eqref{BetaInterval} and select $\epsilon>0$ so small that
\begin{equation}\label{BetaInterval2}
\beta+\epsilon\in \left(0,\frac{1}{2}-\frac{1}{p}\right).
\end{equation}
Let us also recall our definition of ${\cal O}$
$$
{\cal O}=\{(x,t)\in U\times (0,T): D\bv\; \text{is H\"older continuous in some neighborhood of $(x,t)$}\}.
$$
By Corollary \ref{DecayCor}, 
$$
U\times(0,T)\setminus{\cal O}\subset G_1 \cup G_2\cup G_3.
$$
Here
$$
G_1=\left\{(x,t)\in U\times (0,T):  \limsup_{r\rightarrow 0^+}E(x,t,r)>0\right\},
$$
$$
G_2=\left\{(x,t)\in U\times (0,T):  \limsup_{r\rightarrow 0^+}|(\bv)_{Q_r(x,t)}|=+\infty\right\},
$$
and 
$$
G_3=\left\{(x,t)\in U\times (0,T):  \limsup_{r\rightarrow 0^+}|(D\bv)_{Q_r(x,t)}|=+\infty\right\}.
$$
It suffices to show ${\cal P}^{n+2-2\beta}(G_i)=0$ for $i=1,2,3.$
\par Observe by \eqref{SimpleUpperBoundE} and 
\eqref{MyCacci}
$$
\limsup_{r\rightarrow 0^+}E(x,t,r)\le 2(C_0+1) \limsup_{r\rightarrow 0^+}\ffintXT|D\bv-(D\bv)_{Q_r(x,t)}|^2dyds
$$
for any $(x,t)\in U\times (0,T)$.  By Lemma \ref{H2prop} and Corollary \ref{TimeDiffDVCor}, $w=v^i_{x_j}$ satisfies \eqref{FracSpaceW} with $\gamma=\beta+\epsilon$ for each $i=1,\dots, m$ and $j=1,\dots, n$.  Here 
we are using the inclusion $H^1_{\text{loc}}(U)\subset H^\sigma_{\text{loc}}(U)$ $(0<\sigma<1)$, which is proved in Proposition 2.2 of \cite{DiHitch}. Therefore,
$$
G_1\subset \left\{(x,t)\in U\times (0,T):  \limsup_{r\rightarrow 0^+} \ffintXT|D\bv-(D\bv)_{Q_r(x,t)}|^2dyds>0\right\}.
$$
In view of Proposition \ref{DimEstProp}, we conclude 
$$
\text{dim}_{\cal P}(G_1)\le n+2-2(\beta+\epsilon)<n+2-2\beta.
$$
It follows that ${\cal P}^{n+2-2\beta}(G_1)=0$. Likewise, we can also conclude ${\cal P}^{n+2-2\beta}(G_3)=0$.  The conclusion ${\cal P}^{n+2-2\beta}(G_2)=0$ follows similarly as $v^i$ satisfies \eqref{FracSpaceW} for every $\gamma\in (0,1)$ and $i=1,\dots, m$.
\end{proof}

\appendix
\section{Dirichlet problem}\label{ExistenceApp} 
Assume $\psi\in C^2(\R^m) $ and $F\in C^2(\Mmn)$ satisfy \eqref{UnifConv}, \eqref{UnifConv2} and \eqref{ZeroDeriv}. For a given $\bg\in H^1(U;\R^m)$, we will show that a weak solution of the initial value problem 
\begin{equation}\label{DirichletIVP}
\begin{cases}
\partial_t\left(D\psi(\bv)\right)=\Div DF(D\bv), \quad & \text{in}\;U\times(0,T)\\
\hspace{.67in}\bv =0, \quad &\text{on}\;\partial U\times[0,T)\\
\hspace{.67in}\bv =\bg,\quad  \quad &\text{on}\; U\times\{0\}
\end{cases}
\end{equation} 
exists.  In particular, the solution $\bv$ we construct solution will also be a weak solution of \eqref{mainPDE}. We also acknowledge that the existence of a solution to \eqref{DirichletIVP} has already been established in various contexts such as \cite{AltLuck, DiaThe, Vis}. We have added this appendix because we will require more integrability of our weak solutions (see \eqref{naturalbounds2} below). 

\par Note that any smooth solution $\bv$ of \eqref{DirichletIVP} satisfies 
$$
\frac{d}{dt}\int_U\psi^*(D\psi(\bv))dx =- \int_UDF(D\bv)\cdot D\bv dx
$$
and therefore 
\begin{equation}\label{DirIdentity1}
\int_U\psi^*(D\psi(\bv(x,t)))dx +\int^t_0\int_UDF(D\bv)\cdot D\bv dx ds = \int_U\psi^*(D\psi(\bg))dx 
\end{equation}
for each $t\ge 0$. It follows that 
\begin{equation}\label{GlobalEnergyBound}
\sup_{0\le t\le T}\int_U|\bv(x,t)|^2dx+\int^T_0\int_U|D\bv|^2dxdt \le C\int_U|\bg|^2dx
\end{equation}
for a constant $C$ depending only on $\theta,\lambda,\Theta$ and $\Lambda$. 

\par The identity
$$
\frac{d}{dt}\int_UF(D\bv)dx=-\int_U D^2\psi(\bv)\bv_t\cdot\bv_t dx
$$
also holds for any smooth solution $\bv$ of \eqref{DirichletIVP}. Integrating this identity gives
\begin{equation}\label{DirIdentity2}
\int_UF(D(\bv(x,t))dx +\int^t_0\int_U D^2\psi(\bv)\bv_t\cdot\bv_t dxds = \int_UF(D\bg)dx 
\end{equation}
for each $t\ge 0$. Using standard manipulations, we can also deduce 
\begin{equation}\label{GlobalEnergy2}
\sup_{0\le t\le T}\int_U|D\bv(x,t)|^2dx+\int^T_0\int_U|\bv_t|^2dxds\le C\int_U|D\bg|^2dx
\end{equation}
for some constant $C$ only depending on $\theta,\Theta,\lambda$ and $\Lambda$.

\par Inequalities \eqref{GlobalEnergyBound} and \eqref{GlobalEnergy2} leads us to define weak solutions analogously to Definition \ref{weakSolnLoc}.

\begin{defn}\label{WeakSolnIVPDef} Suppose $\bg\in H^1(U;\R^m)$.  A measurable mapping $\bv:U\times[0,T]\rightarrow \R^m$ is a {\it weak solution} of \eqref{DirichletIVP} if $\bv$ satisfies: $(i)$
\begin{equation}\label{naturalbounds2}
\bv\in L^\infty([0,T]; H^1_0(U;\R^m))\quad \text{and}\quad \bv_t\in L^2(U\times [0,T];\R^m);
\end{equation}
$(ii)$
\begin{equation}\label{weakSolnCond2}
\int^T_0\int_U D\psi(\bv)\cdot \bw_t dxdt=\int^T_0\int_U DF(D\bv)\cdot D\bw dxdt,
\end{equation}
for all $\bw\in C^\infty_c(U\times(0,T); \R^m)$; and $(iii)$
\begin{equation}\label{InitialCond}
\bv(\cdot, 0)=\bg.
\end{equation}
\end{defn}

\par As we argued for weak solutions of \eqref{mainPDE}, we can deduce that \eqref{DirIdentity1} and \eqref{DirIdentity2} hold for weak solutions of \eqref{DirichletIVP}. It follows of course that  \eqref{GlobalEnergyBound} and \eqref{GlobalEnergy2} also are valid for weak solutions of \eqref{DirichletIVP}.  Moreover, we can verify 
\begin{equation}\label{DVCont}
\bv\in C([0,T]; H^1(U;\R^m))
\end{equation}
similar to how we argued in part 4 of Proposition \ref{IdentityLemma}. Therefore, it makes sense to prescribe the initial condition \eqref{InitialCond}.

\par One way of constructing a weak solution to \eqref{DirichletIVP} is to use the following implicit time scheme. Let $N\in \N$, set $\tau=T/N$ and $\bv^0=\bg$, then find $\bv^k\in H^1_0(U;\R^m)$ that satisfies
\begin{equation}\label{ITSDirichletIVP}
\begin{cases}
\displaystyle\frac{D\psi(\bv^k)-D\psi(\bv^{k-1})}{\tau}=\Div DF(D\bv^k), \quad & \text{in}\; U\\
\hspace{1.32in}\bv^k=0, \quad & \text{on}\;\partial U
\end{cases}
\end{equation}
weakly for $k=1,\dots, N$. That is, 
\begin{equation}\label{weakDirichletIVP}
\int_U \frac{D\psi(\bv^k)-D\psi(\bv^{k-1})}{\tau}\cdot \bw dx+\int_U DF(D\bv^k)\cdot D\bw dx=0
\end{equation}
for each $\bw \in H^1_0(U;\R^m)$. In particular, once $\bv^1,\dots, \bv^j$ are found, then $\bv^{j+1}$ is the unique minimizer of the strictly convex and coercive functional 
$$
H^1_0(U;\R^m)\ni \bu \mapsto \int_{U}\left(F(D\bu)+\frac{\psi(\bu)-D\psi(\bv^j)\cdot \bu}{\tau}\right)dx.
$$
Therefore, \eqref{DirichletIVP} has a unique solution $\{\bv^1,\dots, \bv^N\}\subset H^1_0(U;\R^m)$. 

\par Let us now derive a few estimates on solutions of the implicit time scheme analogous to \eqref{GlobalEnergyBound} and \eqref{GlobalEnergy2}.

\begin{lem}
Suppose $\{\bv^1,\dots, \bv^N\}\subset H^1_0(U;\R^m)$ is a solution of the implicit time scheme \eqref{ITSDirichletIVP} with $\bv^0=\bg$. Then there is a constant $C$ such that
\begin{equation}\label{GlobalBounddiscrete}
\max_{1\le k\le N}\int_U|\bv^k|^2dx+\tau\sum^N_{k=1}\int_U|D\bv^k|^2dx \le C\int_U|\bg|^2dx
\end{equation}
and
\begin{equation}\label{GlobalEnergy2discrete}
\max_{1\le k\le N}\int_U|D\bv^k|^2dx+\tau\sum^N_{k=1}\int_U\left|\frac{\bv^k-\bv^{k-1}}{\tau}\right|^2dx\le C\int_U|D\bg|^2dx.
\end{equation}
\end{lem}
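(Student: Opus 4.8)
The two bounds are the discrete-in-time counterparts of the energy estimates \eqref{GlobalEnergyBound} and \eqref{GlobalEnergy2}, and the plan is to reproduce the formal derivations of those estimates at the level of the implicit scheme \eqref{weakDirichletIVP}. For \eqref{GlobalBounddiscrete} I would test \eqref{weakDirichletIVP} with $\bw=\bv^k$, and for \eqref{GlobalEnergy2discrete} with $\bw=\bv^k-\bv^{k-1}$; in each case the key is to recognize the resulting difference term as a discrete derivative of a convex ``energy'' via a one-sided convexity inequality, and then to telescope over $k$.

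For \eqref{GlobalBounddiscrete}: choosing $\bw=\bv^k$ in \eqref{weakDirichletIVP} gives $\int_U(D\psi(\bv^k)-D\psi(\bv^{k-1}))\cdot\bv^k\,dx+\tau\int_U DF(D\bv^k)\cdot D\bv^k\,dx=0$. Since $D\psi^{*}$ is the inverse of $D\psi$, we have $\bv^k=D\psi^{*}(D\psi(\bv^k))$, so convexity of $\psi^{*}$ yields the discrete ``chain rule'' $(D\psi(\bv^k)-D\psi(\bv^{k-1}))\cdot\bv^k\ge \psi^{*}(D\psi(\bv^k))-\psi^{*}(D\psi(\bv^{k-1}))$, which substitutes for the identity $\frac{d}{dt}\psi^{*}(D\psi(\bv))=\partial_t(D\psi(\bv))\cdot\bv$ used to obtain \eqref{DirIdentity1}. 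Summing over $k=1,\dots,j$ and using $\bv^0=\bg$ produces $\int_U\psi^{*}(D\psi(\bv^j))\,dx+\tau\sum_{k=1}^{j}\int_U DF(D\bv^k)\cdot D\bv^k\,dx\le\int_U\psi^{*}(D\psi(\bg))\,dx$, the discrete analogue of \eqref{DirIdentity1}. Then \eqref{GlobalBounddiscrete} follows after bounding $DF(D\bv^k)\cdot D\bv^k\ge\lambda|D\bv^k|^2$ and $\frac12\theta|\bv^j|^2\le\psi^{*}(D\psi(\bv^j))$, $\psi^{*}(D\psi(\bg))\le\frac12\Theta|\bg|^2$ via the $F$- and $\psi$-versions of \eqref{MoreEstPsi}, and taking the maximum over $j$; the resulting constant is independent of $N$.

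For \eqref{GlobalEnergy2discrete}: choosing $\bw=\bv^k-\bv^{k-1}$ in \eqref{weakDirichletIVP} gives $\frac1\tau\int_U(D\psi(\bv^k)-D\psi(\bv^{k-1}))\cdot(\bv^k-\bv^{k-1})\,dx+\int_U DF(D\bv^k)\cdot(D\bv^k-D\bv^{k-1})\,dx=0$. The lower bound in \eqref{UnifConv} shows the first term is at least $\theta\tau\int_U|\frac{\bv^k-\bv^{k-1}}{\tau}|^2\,dx$, while convexity of $F$ shows the second term is at least $\int_U(F(D\bv^k)-F(D\bv^{k-1}))\,dx$. Telescoping over $k=1,\dots,j$ gives the discrete version of \eqref{DirIdentity2}, namely $\theta\tau\sum_{k=1}^{j}\int_U|\frac{\bv^k-\bv^{k-1}}{\tau}|^2\,dx+\int_U F(D\bv^j)\,dx\le\int_U F(D\bg)\,dx$, and \eqref{GlobalEnergy2discrete} then follows from $\frac12\lambda|D\bv^j|^2\le F(D\bv^j)$ and $F(D\bg)\le\frac12\Lambda|D\bg|^2$, again with an $N$-independent constant.

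The computations are short, so the only points requiring care are: (i) the one-sided convexity inequality $(D\psi(\bv^k)-D\psi(\bv^{k-1}))\cdot\bv^k\ge\psi^{*}(D\psi(\bv^k))-\psi^{*}(D\psi(\bv^{k-1}))$, which is precisely what makes the telescoping collapse, together with its $F$-analogue used in the second estimate; and (ii) the admissibility of $\bv^k-\bv^{k-1}$ as a test function in \eqref{weakDirichletIVP} when $k=1$, since a priori $\bv^0=\bg$ lies only in $H^1(U;\R^m)$ rather than in $H^1_0(U;\R^m)$. The latter is harmless if one reads the hypothesis as $\bg\in H^1_0(U;\R^m)$ (which is in any case forced by \eqref{naturalbounds2} together with \eqref{DVCont}); otherwise one may first run the scheme with an $H^1_0$ approximation of $\bg$, establish the estimates, and pass to the limit.
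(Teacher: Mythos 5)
Your proposal is correct and follows essentially the same route as the paper: test \eqref{weakDirichletIVP} with $\bv^k$ and with $\bv^k-\bv^{k-1}$, convert the discrete time-difference terms via the convexity of $\psi^*$ (using $\bv^k=D\psi^*(D\psi(\bv^k))$) and of $F$, telescope to obtain the discrete analogues of \eqref{DirIdentity1} and \eqref{DirIdentity2}, and conclude with the quadratic bounds from \eqref{UnifConv}, \eqref{UnifConv2} and \eqref{MoreEstPsi}. Your remark about the admissibility of $\bv^1-\bv^0$ as a test function (i.e.\ reading $\bg\in H^1_0(U;\R^m)$, which \eqref{naturalbounds2} and \eqref{DVCont} force anyway) is a sensible point of care that the paper passes over silently.
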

\begin{proof}
 Choosing $\bw=\bv^k$ in \eqref{weakDirichletIVP} gives

\begin{align*}
\int_U DF(D\bv^k)\cdot D\bv^k dx&=\int_U \frac{D\psi(\bv^{k-1})-D\psi(\bv^{k})}{\tau}\cdot \bv^k dx\\
&=\int_U \frac{D\psi^*(D\psi(\bv^k)) \cdot\left(D\psi(\bv^{k-1})-D\psi(\bv^{k}\right)}{\tau}dx\\
&\le \int_U\frac{\left( \psi^*(D\psi(\bv^{k-1})) -\psi^*(D\psi(\bv^k)) \right)}{\tau}dx.
\end{align*}
Summing over $k=1,\dots, j\le N$ we find
$$
\int_U\psi^*(D\psi(\bv^{j}))dx+\tau\sum^j_{k=1}\int_U DF(D\bv^k)\cdot D\bv^k dx\le \int_U\psi^*(D\psi(\bg))dx.
$$
Consequently, we can now employ elementary manipulations to derive \eqref{GlobalBounddiscrete}. 

\par Selecting 
$\bw=\bv^k-\bv^{k-1}$ in \eqref{weakDirichletIVP} gives
\begin{align*}
\int_U\frac{D\psi(\bv^k)-D\psi(\bv^{k-1})}{\tau}\cdot \frac{\bv^k-\bv^{k-1}}{\tau}dx &= 
-\int_U DF(D\bv^k)\cdot  \frac{D\bv^k-D\bv^{k-1}}{\tau}dx\\
&\le \int_U \frac{F(D\bv^{k-1})-F(D\bv^{k})}{\tau}dx.
\end{align*}
Summing over $k=1,\dots, j\le N$ gives  
$$
\int_UF(D\bv^{j})dx+\tau\sum^j_{k=1}\int_U\frac{D\psi(\bv^k)-D\psi(\bv^{k-1})}{\tau}\cdot \frac{\bv^k-\bv^{k-1}}{\tau}dx\le \int_UF(D\bg)dx.
$$
It is now routine to conclude \eqref{GlobalEnergy2discrete}.
\end{proof}

\par We will now use \eqref{GlobalBounddiscrete} and \eqref{GlobalEnergy2discrete} to show that  \eqref{DirichletIVP} has a weak solution. Consequently, there is at least one weak solution of \eqref{mainPDE} in $U\times (0,T)$ that is partially regular as described in Theorems \ref{mainThm} and \ref{secondThm}. 
\begin{prop}
There exists a weak solution $\bv$ of \eqref{DirichletIVP}.
\end{prop}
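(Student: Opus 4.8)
The plan is to construct the desired weak solution by passing to the limit in the implicit time scheme \eqref{ITSDirichletIVP} as $N\to\infty$, i.e. as $\tau=T/N\to0^+$. For each $N$ I would introduce the piecewise constant interpolant $\bv^\tau:U\times[0,T]\to\R^m$ with $\bv^\tau(\cdot,t)=\bv^k$ for $t\in((k-1)\tau,k\tau]$, and the piecewise affine interpolant $\hat{\bv}^\tau$ with $\hat{\bv}^\tau(\cdot,k\tau)=\bv^k$ for $k=0,1,\dots,N$. The discrete estimates \eqref{GlobalBounddiscrete} and \eqref{GlobalEnergy2discrete} become bounds uniform in $\tau$: both $\bv^\tau$ and $\hat{\bv}^\tau$ are bounded in $L^\infty([0,T];H^1_0(U;\R^m))$, the function $\partial_t\hat{\bv}^\tau$ is bounded in $L^2(U\times[0,T];\R^m)$, and $\|\bv^\tau-\hat{\bv}^\tau\|^2_{L^2(U\times[0,T];\R^m)}\le\tfrac{1}{3}\tau^2\bigl(\tau\sum_{k=1}^{N}\int_U|(\bv^k-\bv^{k-1})/\tau|^2dx\bigr)\to0$.

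I would then pass to a subsequence along which $\bv^\tau\rightharpoonup\bv$ and $\hat{\bv}^\tau\rightharpoonup\bv$ weak-$*$ in $L^\infty([0,T];H^1_0(U;\R^m))$, $\partial_t\hat{\bv}^\tau\rightharpoonup\bv_t$ in $L^2(U\times[0,T];\R^m)$, and $DF(D\bv^\tau)\rightharpoonup\xi$ in $L^2(U\times[0,T];\Mmn)$ for some $\xi$. By the compactness theorem of Aubin \cite{Aubin}, applied exactly as in the proof of Proposition \ref{compactProp}, one has $\hat{\bv}^\tau\to\bv$ in $C([0,T];L^2(U;\R^m))$, hence also $\bv^\tau\to\bv$ in $L^2(U\times[0,T];\R^m)$. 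Testing \eqref{weakDirichletIVP} against $\bw(\cdot,k\tau)$ for a fixed $\bw\in C^\infty_c(U\times(0,T);\R^m)$, multiplying by $\tau$, summing over $k$, and rearranging (Abel summation in time) yields
\[
\int_0^T\!\!\int_U D\psi(\bv^\tau)\cdot\delta^\tau\bw\,dxdt=\int_0^T\!\!\int_U DF(D\bv^\tau)\cdot D\bw\,dxdt,
\]
where $\delta^\tau\bw$ is a time-difference quotient of $\bw$ converging uniformly to $\bw_t$. Since $D\psi$ is globally Lipschitz, $D\psi(\bv^\tau)\to D\psi(\bv)$ in $L^2$, so sending $\tau\to0^+$ gives $\int_0^T\int_U D\psi(\bv)\cdot\bw_t\,dxdt=\int_0^T\int_U\xi\cdot D\bw\,dxdt$; in particular $\partial_t(D\psi(\bv))=\Div\xi$ in the sense of distributions on $U\times(0,T)$.

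The main obstacle is to identify $\xi$ with $DF(D\bv)$, which requires upgrading $D\bv^\tau\rightharpoonup D\bv$ to strong convergence in $L^2$; I would do this following Proposition \ref{compactProp}. On one hand, the discrete energy identity obtained in the proof of the preceding lemma, $\int_U\psi^*(D\psi(\bv^N))dx+\tau\sum_{k=1}^{N}\int_U DF(D\bv^k)\cdot D\bv^k dx\le\int_U\psi^*(D\psi(\bg))dx$, together with $\bv^\tau(\cdot,T)=\bv^N\to\bv(\cdot,T)$ in $L^2$ and the quadratic growth of $\psi^*\circ D\psi$ from \eqref{MoreEstPsi} (which forces $\int_U\psi^*(D\psi(\bv^N))dx\to\int_U\psi^*(D\psi(\bv(\cdot,T)))dx$ along a further subsequence), gives
\[
\limsup_{\tau\to0^+}\int_0^T\!\!\int_U DF(D\bv^\tau)\cdot D\bv^\tau\,dxdt\le\int_U\psi^*(D\psi(\bg))dx-\int_U\psi^*(D\psi(\bv(\cdot,T)))dx.
\]
On the other hand, multiplying $\partial_t(D\psi(\bv))=\Div\xi$ by $\bv$, integrating by parts in space (with no boundary contribution, since $\bv\in H^1_0$) and in time, and invoking the chain rule $\frac{d}{dt}\psi^*(D\psi(\bv))=\partial_t(D\psi(\bv))\cdot\bv$ (rigorous as in the proof of Proposition \ref{IdentityLemma}), shows the right side above equals $\int_0^T\int_U\xi\cdot D\bv\,dxdt$. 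The monotonicity contained in \eqref{UnifConv2} then yields
\[
\lambda\limsup_{\tau\to0^+}\int_0^T\!\!\int_U|D\bv^\tau-D\bv|^2dxdt\le\limsup_{\tau\to0^+}\int_0^T\!\!\int_U\bigl(DF(D\bv^\tau)-DF(D\bv)\bigr)\cdot(D\bv^\tau-D\bv)\,dxdt=0,
\]
because on expanding the last integrand the term $\iint DF(D\bv^\tau)\cdot D\bv^\tau$ has $\limsup$ at most $\iint\xi\cdot D\bv$, while the remaining three terms converge to $\iint\xi\cdot D\bv$, $\iint DF(D\bv)\cdot D\bv$ and $\iint DF(D\bv)\cdot D\bv$ by the weak convergences $DF(D\bv^\tau)\rightharpoonup\xi$ and $D\bv^\tau\rightharpoonup D\bv$. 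Hence $D\bv^\tau\to D\bv$ in $L^2(U\times[0,T];\Mmn)$, so $DF(D\bv^\tau)\to DF(D\bv)$ in $L^2$ and $\xi=DF(D\bv)$, which establishes \eqref{weakSolnCond2}.

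It remains to check the rest of Definition \ref{WeakSolnIVPDef}. Condition \eqref{naturalbounds2} is immediate since $\bv\in L^\infty([0,T];H^1_0(U;\R^m))$ and $\bv_t\in L^2(U\times[0,T];\R^m)$. For \eqref{InitialCond}, the argument behind \eqref{DVCont} (which uses only \eqref{naturalbounds2} and \eqref{weakSolnCond2}) gives $\bv\in C([0,T];H^1_0(U;\R^m))$, so the trace $\bv(\cdot,0)$ is meaningful; since $\hat{\bv}^\tau(\cdot,0)=\bg$ for every $\tau$ and $\hat{\bv}^\tau\to\bv$ in $C([0,T];L^2(U;\R^m))$, we conclude $\bv(\cdot,0)=\bg$. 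Therefore $\bv$ is a weak solution of \eqref{DirichletIVP}, and in particular a weak solution of \eqref{mainPDE} on $U\times(0,T)$, so the partial regularity of Theorems \ref{mainThm} and \ref{secondThm} applies to it.
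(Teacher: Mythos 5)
Your construction is correct, and it reaches the same weak solution through the same implicit scheme, the same discrete estimates \eqref{GlobalBounddiscrete}--\eqref{GlobalEnergy2discrete}, the same $O(\tau^2)$ comparison between the piecewise constant and piecewise affine interpolants, and the same use of Aubin's theorem; where you genuinely diverge from the paper is in how the limit equation is obtained. The paper interpolates the nonlinear quantity $D\psi(\bv^k)$ affinely in time, bounds its time derivative in $L^2([0,T];H^{-1}(U;\R^m))$, and then cites Section III.1 of Visintin for the convergence $\bv^{N_j}\rightarrow\bv$ in $L^2([0,T];H^1_0(U;\R^m))$ and the identification of $\partial_t(D\psi(\bv))$ and of the flux, reserving its own effort for the integrability upgrade \eqref{naturalbounds2} via the interpolant $\bu^N$ and the identification $\bu\equiv\bv$. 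You instead make the limit passage self-contained: Abel summation in the discrete weak form, strong $L^2$ convergence of $\bv^\tau$ to handle $D\psi(\bv^\tau)$, and then the discrete energy inequality compared with the energy identity for $\partial_t(D\psi(\bv))=\Div\,\xi$, plus monotonicity of $DF$, to force $D\bv^\tau\rightarrow D\bv$ strongly and $\xi=DF(D\bv)$ --- in effect transplanting the argument of Proposition \ref{compactProp} (step 3) to the time-discrete setting. Your route buys independence from the external reference and delivers \eqref{naturalbounds2}, the strong gradient convergence, and \eqref{InitialCond} in one pass; the paper's route is shorter on the page because the delicate identification step is delegated. Two small presentational points: the global (in space and on $[0,T]$) energy identity you invoke for the limit equation uses both $\bv(\cdot,t)\in H^1_0(U;\R^m)$ to kill boundary terms and the fact $\bv(\cdot,0)=\bg$, so the initial condition --- which is already available from $\hat{\bv}^\tau\rightarrow\bv$ in $C([0,T];L^2(U;\R^m))$ and $\hat{\bv}^\tau(\cdot,0)=\bg$ --- should be recorded before the energy comparison rather than at the end; and the convergence $\int_U\psi^*(D\psi(\bv^N))\,dx\rightarrow\int_U\psi^*(D\psi(\bv(\cdot,T)))\,dx$ needs no further subsequence, since $w\mapsto\psi^*(D\psi(w))$ is continuous with quadratic growth and $\bv^N\rightarrow\bv(\cdot,T)$ strongly in $L^2(U;\R^m)$.
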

\begin{proof} Let $N\in \N, \tau=T/N$, and suppose $\{\bv^1,\dots, \bv^N\}\subset H^1_0(U;\R^m)$ is a solution of the implicit time scheme \eqref{ITSDirichletIVP} with $\bv^0=\bg\in H^1(U;\R^m)$. Set $\tau_k:=\tau k$ for $k=0,\dots,N$ and define
$$
\begin{cases}
\bv^N(\cdot,t):=
\begin{cases}
\bg, \quad &t=0\\
\bv^k, \quad &\tau_{k-1}< t\le \tau_k
\end{cases}
\\\\
\bw^N(\cdot,t):=D\psi(\bv^{k-1})+\displaystyle\left(\frac{t-\tau_{k-1}}{\tau}\right)\left(D\psi(\bv^{k})-D\psi(\bv^{k-1})\right), \quad \tau_{k-1}\le t\le \tau_k
\end{cases}
$$
for $t\in [0,T]$.  It is straightforward to use \eqref{ITSDirichletIVP} and check that
$$
\|\partial_t(\bw^N)\|_{H^{-1}(U;\R^m)}=\|DF(D\bv^N)\|_{L^2(U;\Mmn)}
$$
holds for $t\in (0,T)\setminus\{\tau_1,\dots,\tau_N\}$. 

\par Furthermore, it is routine to use \eqref{GlobalBounddiscrete} to show  
$$
\sup_{0\le t\le T}\int_U|\bv^N(x,t)|^2dx+\int^T_0\int_U|D\bv^N|^2dxdt \le C\int_U|\bg|^2dx,
$$
which in turn implies that $\bw^N$ fulfills
$$
\sup_{0\le t\le T}\int_U|\bw^N(x,t)|^2dx+\int^T_0\|\partial_t(\bw^N(\cdot, t))\|_{H^{-1}(U;\R^m)}^2dt \le C\int_U|\bg|^2dx.
$$
By the proof given in Section III.1 of \cite{Vis}, there is $\bv$ satisfying 
$$
\bv\in C([0,T]; L^2(U;\R^m))\cap  L^2([0,T]; H^1_0(U;\R^m))
$$
and subsequences $(\bv^{N_j})_{j\in \N}$ and $(\bw^{N_j})_{j\in \N}$ such that $\bv^{N_j}\rightarrow \bv$ in $L^2([0,T]; H^1_0(U;\R^m))$ and  $\partial_t(\bw^{N_j})\rightarrow \partial_t(D\psi(\bv))$ in $L^2([0,T]; H^{-1}(U;\R^m))$.  Moreover, $\bv$ satisfies the weak solution condition \eqref{weakSolnCond2}. Therefore, we are only left to verify that $\bv$ also satisfies \eqref{naturalbounds2}.

\par To this end, we define 
$$
\bu^N(\cdot,t):=\bv^{k-1}+\displaystyle\left(\frac{t-\tau_{k-1}}{\tau}\right)\left(\bv^{k}-\bv^{k-1}\right), \quad \tau_{k-1}\le t\le \tau_k
$$
for $t\in [0,T]$.  By \eqref{GlobalEnergy2discrete},  
$$
\sup_{0\le t\le T}\int_U|D\bu^N(x,t)|^2dx+\int^T_0\int_U|\partial_t\bu^N|^2dxds\le C\int_U|D\bg|^2dx.
$$
It follows that $(\bu^N(\cdot,t))_{N\in\N}\subset H^1_0(U;\R^m)$ is bounded for each $t\in [0,T]$ and that $\bu^N: [0,T]\rightarrow L^2(U;\R^m)$ is uniformly equicontinuous.  Since $H^1_0(U;\R^m)\subset L^2(U;\R^m)$ with compact embedding, there we can pass to the limit along a subsequence $(\bu^{N_j})_{j\in \N}$ (that will not be labeled) to find a $\bu$ such that $\bu^{N_j}\rightarrow \bu$ in $C([0,T]; L^2(U;\R^m))$ (by Aubin's compactness theorem \cite{Aubin}) and $\partial_t\bu^{N_j}\rightharpoonup \partial_t\bu$ in $L^2(U\times (0,T);\R^m)$.  In particular, we note that $\bu$ satisfies  
\eqref{naturalbounds2}.

\par In order to conclude, it suffices to show $\bv\equiv\bu$. To see this, we recall \eqref{GlobalEnergy2discrete} and compute
\begin{align*}
\int^T_0\int_U|\bu^N-\bv^N|^2dxdt&=\sum^N_{k=1}\int^{\tau_k}_{\tau_{k-1}}\int_U|\bv^N-\bu^N|^2dxdt\\
&=\sum^N_{k=1}\int^{\tau_k}_{\tau_{k-1}}\int_U\left|\left(1-\left(\frac{t-\tau_{k-1}}{\tau}\right)\right)(\bv^k-\bv^{k-1})\right|^2dxdt\\
&=\sum^N_{k=1}\int^{\tau_k}_{\tau_{k-1}}\left(1-\left(\frac{t-\tau_{k-1}}{\tau}\right)\right)^2dt\int_U\left|\bv^k-\bv^{k-1}\right|^2dx\\
&=\sum^N_{k=1}\int^{\tau_k}_{\tau_{k-1}}\left(\frac{\tau_{k}-t}{\tau}\right)^2dt\int_U\left|\bv^k-\bv^{k-1}\right|^2dx\\
&=\sum^N_{k=1}\frac{\tau}{3}\int_U\left|\bv^k-\bv^{k-1}\right|^2dx\\
&=\frac{\tau^2}{3}\sum^N_{k=1}\int_U\frac{\left|\bv^k-\bv^{k-1}\right|^2}{\tau}dx\\
&\le \frac{\tau^2}{3}\left(C\int_U|D\bg|^2dx\right)\\
&\le \frac{T^2}{3N^2}\left(C\int_U|D\bg|^2dx\right).
\end{align*}
Letting $N=N_j$ and sending $j\rightarrow\infty$ gives 
$$
\int^T_0\int_U|\bu-\bv|^2dxdt=0.
$$
Consequently, $\bv$ satisfies \eqref{naturalbounds2} and is therefore a weak solution of \eqref{DirichletIVP}.
\end{proof}


\end{document}